\newcommand{\numeoc}[1]{\num[round-precision=1,round-mode=places, scientific-notation=false]{#1}}
\newcommand{\divergence}{\operatorname{div}}
\newcommand{\id}{\operatorname{Id}}
\newcommand{\curl}{\operatorname{curl}}
\newcommand{\rr}{\mathbb{R}}
\newcommand{\vecb}[1]{{#1}}
\newcommand{\matb}[1]{{#1}}
\newcommand{\trans}{{\textrm{T}}}
\newcommand{\ecoord}{{\vecb{x}}}
\newcommand{\normal}{{\vecb{n}}}
\newcommand{\tangential}{{\vecb{t}}}
\newcommand{\normalhat}{\hat{\vecb{n}}}
\newcommand{\tangentialhat}{\hat{\vecb{t}}}
\newcommand{\ds}{\mathop{~\mathrm{d} s}}
\newcommand{\dx}{\mathop{~\mathrm{d} \ecoord}}
\newcommand{\dxhat}{\mathop{~\mathrm{d} \hat{\ecoord}}}
\newcommand{\mesh}{\mathcal{T}_h}
\newcommand{\facetT}{\mathcal{F}_T}
\newcommand{\facets}{\mathcal{F}_h}
\newcommand{\facetsint}{\mathcal{F}_h^{\text{int}}}
\newcommand{\facetsext}{\mathcal{F}_h^{\text{ext}}}
\newcommand{\trace}[1]{\textrm{tr}({#1})}
\newcommand{\jump}[1]{ {[\![ #1 ] \!]}}
\newcommand{\Dev}[1]{\mathop{\text{dev}}{(#1)}}
\newcommand{\dev}[1]{\Dev{#1}}
\newcommand{\devnorm}[1]{ \left\|{#1}\right\|_{1, \text{dev}, h}}
\newcommand{\visc}{\nu}
\newcommand{\blfone}{b_{1}}
\newcommand{\ablf}{a}
\newcommand{\blftwo}{b_{2}}
\newcommand{\blfbig}{B}
\newcommand{\Velspace}{{\vecb{V}}}
\newcommand{\Velspaceh}{{\Velspace_h}}
\newcommand{\Velspacehdivfree}{{\Velspace_h^0}}
\newcommand{\Velvar}{{\vecb{u}}}
\newcommand{\Velvartest}{{\vecb{v}}}
\newcommand{\Velvarh}{{\vecb{u}_h}}
\newcommand{\Velvarhtest}{\vecb{v}_h}
\newcommand{\Velnormh}[1]{|| #1 ||_\Velspaceh}
\newcommand{\honenormh}[1]{|| #1 ||_{1,h} }
\newcommand{\Stressspace}{{\matb{\Sigma}}}
\newcommand{\Stressspaceh}{{\Stressspace_h}}
\newcommand{\Stressvar}{{\matb{\sigma}}}
\newcommand{\Stressvartest}{{\matb{\tau}}}
\newcommand{\Stressvarh}{\matb{\sigma}_h}
\newcommand{\Stressvarhtest}{\matb{\tau}_h}
\newcommand{\Sigmanormh}[1]{\left\| #1 \right\|_\Stressspaceh}
\newcommand{\Presspace}{{Q}}
\newcommand{\Presspaceh}{{\Presspace_h}}
\newcommand{\Presvar}{{p}}
\newcommand{\Presvartest}{{q}}
\newcommand{\Presvarh}{{p_h}}
\newcommand{\Presvarhtest}{{q_h}}
\newcommand{\Presnormh}[1]{|| #1 ||_\Presspaceh}
\newcommand{\Hcurldiv}[1]{H(\curl\divergence,#1)}
\newcommand{\Hone}{\vecb{H}^1}
\newcommand{\Poly}{{\mathbb{P}}}
\newcommand{\Forcevar}{{\vecb{f}}}
\newcommand{\cle}{\lesssim}
\newcommand{\RRR}{\mathbb{R}}
\newcommand{\om}{\Omega}
\newcommand{\const}{\mathrm{c}}
\newcommand{\Tref}{\widehat{T}}
\newcommand{\kernel}{K_h}
\newcommand{\proj}{\Pi}
\newcommand{\piola}{\mathcal{P}}
\newcommand{\covariant}{\mathcal{C}}
\newcommand{\covariantpiola}{ \mathcal{M}}
\newcommand{\MM}{\covariantpiola}
\newcommand{\dd}{\mathbb{D}}
\newcommand{\DD}{\mathcal{D}}
\let\d\partial
\newcommand{\grad}{\nabla}
\renewcommand{\div}{\operatorname{div}}
\newcommand{\dt}{{\tilde{d}}}
\newcommand{\ip}[1]{\langle {#1} \rangle}
\newenvironment{customlegend}[1][]{%
    \begingroup
    \csname pgfplots@init@cleared@structures\endcsname
    \pgfplotsset{#1}%
}{%
    \csname pgfplots@createlegend\endcsname
    \endgroup
}%
\def\addlegendimage{\csname pgfplots@addlegendimage\endcsname}
\begin{document}

\title{A mass conserving mixed stress formulation for the Stokes equations}
\shorttitle{MCS formulation for the Stokes equations}

\author{%
{\sc Jay Gopalakrishnan}\thanks{Email: gjay@pdx.edu}\\[2pt]
Portland State University, PO Box 751, Portland OR 97207,USA\\[6pt]
{\sc and}\\[6pt]
{\sc
Philip L. Lederer\thanks{Corresponding author. Email: philip.lederer@tuwien.ac.at},
Joachim Sch{\"oberl}\thanks{Email: joachim.schoeberl@tuwien.ac.at}
} \\[2pt]
Institute for Analysis and Scientific Computing, TU Wien\\
Wiedner Hautstra{\ss}e 8-10, 1040 Wien, Austria
}
\shortauthorlist{Gopalakrishnan, Lederer, and Sch\"oberl}

\maketitle

\begin{abstract}
  {We propose a new discretization of a mixed stress formulation of
    the Stokes equations. The velocity $u$ is
    approximated with $H(\divergence)$-conforming finite elements
    providing exact mass conservation. While many standard methods use
    $H^1$-conforming spaces for the discrete velocity,
    $H(\divergence)$-conformity fits the considered variational
    formulation in this work. A new stress-like variable $\sigma$
    equalling the gradient of the velocity is set within a new
    function space~$H(\curl \divergence)$.  New matrix-valued finite
    elements having continuous ``normal-tangential'' components are
    constructed to approximate functions in $H(\curl \divergence)$.
    An error analysis concludes with optimal rates of convergence for
    errors in $u$ (measured in a discrete $H^1$-norm), errors in
    $\sigma$ (measured in $L^2$) and the pressure $p$ (also measured
    in $L^2$).
    The exact mass conservation property
    is directly related to another structure-preservation property 
    called {\it pressure robustness}, as shown
    by pressure-independent velocity error estimates.
    The computational cost measured in terms of interface degrees of
    freedom is comparable to old and new Stokes discretizations.       
  }

{mixed finite element methods; incompressible flows; Stokes equations}
\end{abstract}

\section{Introduction}
\label{sec:introduction}
We introduce a new method for the mixed stress formulation of the Stokes equations. Let $\Velvar$ and $\Presvar$ be the
velocity and pressure respectively.
Assume that we are given an external force $\Forcevar$, the kinematic
viscosity $\visc$ and a bounded domain $\Omega \subset \rr^d$ ($d=2$ or $3$)
with Lipschitz boundary $\d\Omega$. 
The  standard velocity-pressure formulation 
\begin{align} \label{eq::stokes}
  \left\{
  \begin{aligned}
  -\divergence(\visc \nabla \Velvar) + \nabla \Presvar & = \Forcevar && \textrm{in } \Omega,  \\
  \divergence (\Velvar) &=0 && \textrm{in } \Omega, \\
  \Velvar &= 0 &&\textrm{on } \partial \Omega,
\end{aligned}\right.
\end{align}
can be reformulated by introducing 
the variable $\Stressvar = \nu \nabla \Velvar$ as follows
\begin{align} \label{eq::mixedstressstokes-pre}
  \left\{
  \begin{aligned}
  \frac{1}{\nu}{\Stressvar} - \nabla \Velvar & = 0 &&\quad \textrm{in } \Omega,  \\
 \divergence(  \Stressvar) - \nabla \Presvar & = -\Forcevar &&\quad \textrm{in } \Omega,  \\
  \divergence (\Velvar) &=0 &&\quad \textrm{in } \Omega, \\
  \Velvar &= 0 &&\quad \textrm{on } \Gamma.
 \end{aligned}
\right.
\end{align}
Many authors have studied this formulation previously, e.g.,
\cite{MR1934446,MR1464150,MR1231323,Farhloulcanadian}. The initial
interest in this formulation as a numerical avenue appears to be due
to the fact that fluid stresses can be computed merely by algebraic
operations on $\sigma$ (i.e., no differentiation of computed variables
is needed).  In this paper, we 
 study the discretization errors and certain  interesting 
structure-preserving features of a new numerical method based
on~\eqref{eq::mixedstressstokes-pre}.

Although both formulations are formally equivalent, the mixed stress
formulation \eqref{eq::mixedstressstokes-pre} requires less regularity on the velocity field $\Velvar$. 
When considering a variational formulation of the classical
velocity-pressure formulation \eqref{eq::stokes}, the proper spaces
for the velocity and pressure are given by $\Hone_0(\Omega, \rr^d)$ and
$ L^2_0(\Omega)$, respectively. Here
$\Hone_0(\Omega,\rr^d)$ is the standard vector valued Sobolev space of
order one with zero boundary conditions and $L^2_0(\Omega)$ is the
space of square integrable functions with zero mean value. This pair
of spaces  fulfills the inf-sup condition or the LBB condition. Moreover,
the divergence operator from $H_0^1(\om, \RRR^d)$ to
$L_0^2(\om)$ is surjective.
Finite element discretizations of the velocity-pressure
formulation~\eqref{eq::stokes} 
is an active area of research~\cite{JLMNR:sirev}.
While many pairs of discrete velocity-pressure spaces are known to
satisfy the discrete LBB condition (needed to prove stability),
not all of them have the property that the divergence
operator from the discrete velocity space to the discrete pressure
space is surjective.
Methods that have this surjectivity property are 
particularly interesting because they provide numerical velocity
approximations that are exactly divergence free, leading to exact {\em
  mass conservation}.

Exact mass conservation (and consistency) further leads to a structure-preservation property
called {\em pressure robustness}. A feature of
solutions of~\eqref{eq::stokes} is that when the load $f$ changes
irrotationally (i.e., when $f$ is perturbed by a gradient field), then
the fluid velocity $u$ does not change (since the additional force can
be balanced solely by a pressure gradient).  Indeed, since
divergence-free functions are $L^2$-orthogonal to the irrotational
part of~$f$, and since the velocity $u$ is uniquely determined within
the divergence free subspace of $\Hone_0(\om, \RRR^d)$, the velocity
cannot be altered by irrotational changes in~$f$. This property is not
preserved by all finite element discretizations -- see
\cite{linke2014role} -- leading to velocity error estimates that
depend on the pressure approximation. A practical manifestation of
this is a phenomenon akin to ``locking,'' where the velocity error
increases as $\visc \to 0$ (even if the pressure error remains under
control). Methods that do not exhibit this limitation are called
pressure robust methods.  In the recent works of \cite{blms:2015,
  2016arXiv160903701L,Linke:2012,lmt:2016}, considering different
velocity and pressure spaces, it was shown that a (non-conforming)
modification of the load (right hand side) allows one to obtain
optimal pressure-independent velocity error estimates.

An alternative to this load modification approach is the use of finite
element spaces which lead to exactly divergence-free velocity
approximations. In this case, no load modification is needed and the
velocity error does not exhibit locking. A well-known example is the
$H^1$-conforming Scott-Vogelius element. However, it demands a special
barycentric triangulation of $\Omega$. Another approach, leading to
exactly divergence-free discretizations, is to abandon full
$H^1$-conformity and retain only the continuity of the normal
component of the velocity, i.e., use $H(\divergence)$-conforming
finite elements for approximating $u$ instead of $H^1$-conforming
finite elements. Such discretizations, tailored to approximate the
incompressibility constraint properly, were introduced by
\cite{cockburn2005locally, cockburn2007note} and for the Brinkman
Problem by \cite{stenberg2011}. Therein, and also in the work by
\cite{LS_CMAME_2016}, the $H^1$-conformity is treated in a weak sense
and a hybrid discontinuous Galerkin method is constructed. Their
choice of velocity and pressure space fulfills the discrete LBB
condition and moreover \cite{LedererSchoeberl2017} shows that it is
robust with respect to the polynomial order.

In this work, the idea of employing an $H(\divergence)$-conforming
velocity space is taken to an infinite dimensional variational setting
to obtain insights into possible spaces for $\sigma.$ Obviously such a
variational formulation cannot be derived using the standard
velocity-pressure formulation \eqref{eq::stokes} as it demands too
much regularity on the velocity. In contrast, the mixed stress
formulation \eqref{eq::mixedstressstokes-pre} is a perfect fit.  It leads
to a variational formulation requiring less regularity for $u$ and 
a new function space for $\sigma$, namely 
 $\Hcurldiv{\Omega}$. 
We call this formulation 
the {\em {\bf m}ass {\bf c}onserving mixed {\bf s}tress (MCS) formulation}.
To obtain a discretization, we design new non-conforming 
finite elements
for
 $\Hcurldiv{\Omega}$,  motivated by the TDNNS method for structural
mechanics introduced by~\cite{MR3712290, MR2826472, astriddiss}. 
Even though the resulting method, called the {\em MCS method,} 
includes the introduction of another variable, the computational costs
are comparable to other standard methods. In two dimensions, after a static
condensation step, where local element degrees of freedom are
eliminated, the approximation of the velocity with polynomials of
order $k$ requires $k+1$ coupling degrees of freedom on each element
interface for the $H(\divergence)$-conforming velocity space and $k$
for the stress space. This is the same number as for the reduced
stabilized (projected jumps) $H(\divergence)$-conforming hybrid
discontinuous Galerkin method introduced in \cite{LS_CMAME_2016}. By a
small modification, one could even reduce the coupling of the velocity
space by considering only relaxed $H(\divergence)$-conformity by the
same technique utilized in \cite{ledlehrschoe2017relaxedpartI, ledlehrschoe2018relaxedpartII}. Then the costs (for $k = 1$)
are the same as for the lowest order non-conforming $H^1$-based method. Similar cost comparisons can be made in 
three dimensions.

There appears to be multiple approaches for the analysis of our new
scheme. In this paper, we focus on one of these possible approaches,
which uses a discrete $H^1$-like norm for $u$ and a $L^2$ norm
for $\sigma$. Even though $u$ is approximated using
$H(\divergence)$-conforming elements, the use of the discrete
$H^1$-like norm for velocity errors permits easy comparison with the
classical velocity-pressure formulation.  An analysis in more
``natural'' norms (i.e., the $H(\divergence)$-norm for $u$ and
$\Hcurldiv{\Omega}$-norm for $\sigma$) is the topic of a forthcoming work.


The paper is organized as follows. We begin with Section
\ref{sec:notation} where we define the notations and prove certain 
preliminary results that we shall use throughout this work.
In Section \ref{sec:derivation} we present the  
new MCS variational formulation of the Stokes problem.
Section~\ref{sec:discretevarform} defines the discrete variational
formulation and the MCS method. 
After revealing the continuity requirements across element interfaces 
necessary for being conforming in~$\Hcurldiv{\Omega}$, 
we then define new non-conforming finite elements for the $\sigma$ variable 
in Section~\ref{sec::finiteelements}.
All technical details
needed to prove stability in certain discrete norms
and convergence of the new method 
are included in Section~\ref{sec::apriorianalysis}.
In Section~\ref{sec:numerics} we present various numerical examples to 
illustrate  the theory.

\section{Preliminaries}   \label{sec:notation}

In this section we define the notations we use throughout 
and establish properties of certain 
Sobolev spaces we shall need later.

Let $\Omega \subset \rr^d$
be an open bounded domain 
with Lipschitz boundary $\Gamma:= \partial \Omega$. 
Throughout, $d$ is either $2$ or $3$. Let $\DD(\om)$ or $\DD(\om, \RRR)$
 denote the set of infinitely 
differentiable compactly supported real-valued functions on $\om$ and let $\DD'(\om)$ denote the space of distributions as usual. To indicate 
vector and matrix-valued functions on $\om$, we include the range in the notation:
 $\DD(\om, \RRR^d) = \{ u: \om \to \RRR^d| \; u_i \in \DD(\om)\}$. Such notations are 
extended in an obvious fashion to other function spaces as needed. E.g., while $L^2(\om) = L^2(\om, \RRR)$ denotes the space of square integrable real-valued functions on $\om$, analogous vector and matrix-valued function spaces are defined by
\begin{align*}
L^2(\om, \rr^d) := \left\{ u : \om \to \rr^d \big| u_i \in L^2(\om)\right\} \quad \textrm{and} \quad L^2(\om, \rr^{d\times d}) := \left\{ \sigma : \om \to \rr^{d\times d} \big| \sigma_{ij} \in L^2(\om)\right\}.
\end{align*}
Similarly, $\DD'(\om, \RRR^d)$ denotes the space of distributions whose
components are distributions in $\DD'(\om)$, $H^m(\Omega, \rr^{d\times d})$, denotes
the space of matrix-valued functions whose entries are in the standard
Sobolev
space $H^m(\om)$ for any $m\in \RRR$, etc.

Certain differential operators have different definitions depending on
context. By ``curl'' we mean any of the following three differential
operators
\begin{align*}
  \curl(\phi)
  & = (-\partial_2 \phi, \partial_1 \phi)^\trans, 
  && \text{ for } \phi \in \DD'(\om, \RRR) \text{ and } d=2,
  \\
  \curl( \phi)
  & = -\partial_2 \phi_1+ \partial_1 \phi_2,
  && \text{ for } \phi \in \DD'(\om, \RRR^2) \text{ and } d=2,
  \\
  \curl (\phi)
  & 
    = 
    (\d_2\phi_3 - \d_3 \phi_2, \d_3\phi_1 - \d_1\phi_3, 
    \d_1\phi_2 - \d_2\phi_1)^\trans
  && \text{ for } \phi \in \DD'(\om, \RRR^3) \text{ and } d=3,     
\end{align*}
where $(\cdot)^\trans$ denotes the transpose and $\d_i$ abbreviates
$\d/\d x_i$.  The type of the operand determines which operator
definition to apply in any context, so there will be no confusion.
Similarly, $\grad$ is to be understood from context as an operator
that results in either a vector whose components are
$[\grad \phi]_i = \d_i \phi$ for $\phi \in \DD'(\om, \RRR)$ or a
matrix whose entries are $[\grad \phi]_{ij} = \d_j \phi_i$ for
$\phi \in \DD'(\om, \RRR^d)$. Finally, in a similar manner, we
understand $\div(\phi)$ as either $\sum_{i=1}^d \d_i \phi_i$ for
vector-valued $\phi \in \DD'(\om, \RRR^d),$ or the row-wise divergence
$\sum_{j=1}^d \d_j \phi_{ij}$ for matrix-valued
$\phi \in \DD'(\om, \RRR^{d \times d})$.

Let $\dt= d(d-1)/2$ (so that $\dt=1$ and $3$ for $d=2$ and 3, respectively).
The following Sobolev spaces for $d=2,3$ are essential in our study:
\begin{align*}
  \vecb{H}(\divergence,\Omega)
  &= \{ \Velvar \in L^2(\Omega, \rr^d): \divergence(\Velvar) \in
    L^2(\Omega) \},
  \\
  \vecb{H}(\curl,\Omega) 
  &= \{ \Velvar \in L^2(\Omega, \rr^d):
    \curl(\Velvar) \in L^2(\Omega,
    \rr^{\dt}) \},
\\
  H^{-1}(\curl, \Omega)  
  &= \{ \vecb{\phi} \in H^{-1}(\Omega, \rr^d): \curl(\vecb{\phi}) \in
    H^{-1}(\Omega, \rr^{\dt})\}, 
\\
  \Hcurldiv{\Omega} 
  & = \{ \Stressvar \in L^2(\Omega, \rr^{d \times d}):
    \curl(\divergence (\Stressvar)) \in H^{-1}(\Omega, \rr^{\dt}) \}.
\end{align*}
A well-known trace theorem permits us to define
$H_0(\div,\om) = \{ u \in H(\div, \om): u \cdot n|_\Gamma =0\}.$ Here,
$n$ denotes the outward unit normal on $\Gamma$. In other occurrences,
it may denote the unit outward normal on boundaries of other domains
determined from context.

The action of a continuous linear functional $f$ on an element $x$ of
a topological space $X$ is denoted by $\ip{ f, x}_X$, e.g., the action
of a distribution $F \in \DD'(\om, \RRR^d)$ on a
$\phi \in \DD(\om, \RRR^d)$ is denoted by
$\ip{F, \phi}_{\DD(\om, \RRR^d)}$.  We omit the subscript in
$\ip{\cdot, \cdot}$ when its obvious from context.  When $X$ is a
Hilbert space, we use $X^*$ to denote its dual space. Recall that
$H_0^1(\om)^* = H^{-1}(\om)$. Note that any $f \in
H^{-1}(\om)$ is a distribution and 
\begin{equation}
  \label{eq:4}
  \ip{ f, \phi}_{H_0^1(\om)} = \ip{ f, \phi}_{\DD(\om)}  
\end{equation}
for all $\phi \in \DD(\om)$. The inner product of $X$ is denoted by
$(\cdot, \cdot)_X$. When $X$ is $L^2(\om), L^2(\om, \RRR^d)$, or
$L^2(\om, \RRR^{d \times d})$, we abbreviate $(\cdot, \cdot)_X$ to
simply $(\cdot, \cdot)$.

\begin{lemma}
  \label{lem:Hodiv_subset_Hmcurl}
  If $F \in H_0(\div, \om)^*$, then $F$ is in $H^{-1}(\curl, \om)$ and 
  for all $v \in H_0^1(\om)$, 
  \[
  \ip{ \curl(F), v}_{H_0^1(\om)} 
  = \ip{F, \curl(v)}_{H_0(\div, \om)}.
  \]
\end{lemma}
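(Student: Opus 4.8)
The plan is to realize $F$ first as a distribution, then identify it as an element of $H^{-1}(\om,\RRR^d)$ by restriction, and finally bound its distributional curl by exploiting $\div\circ\curl = 0$. Everything rests on two continuous inclusions: $\DD(\om,\RRR^d)\subset H_0(\div,\om)$ and, more importantly, $H_0^1(\om,\RRR^d)\subset H_0(\div,\om)$. Both hold because zero (tangential or full) trace forces zero normal trace, and $\|w\|_{H(\div,\om)}^2 = \|w\|_{L^2(\om)}^2 + \|\div w\|_{L^2(\om)}^2 \le C\,\|w\|_{H^1(\om)}^2$. First I would use this to restrict $F\in H_0(\div,\om)^*$ to $H_0^1(\om,\RRR^d)$: the restriction is a bounded functional, so $F\in (H_0^1(\om,\RRR^d))^* = H^{-1}(\om,\RRR^d)$. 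Restricting further to $\DD(\om,\RRR^d)$ exhibits the same $F$ as a distribution, and by \eqref{eq:4} its $H^{-1}$ representative is consistent with this distributional action.

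Next I would control $\curl F$ as a distribution. For $v\in\DD(\om,\RRR^{\dt})$, the definition of the distributional curl gives $\ip{\curl(F),v}_{\DD} = \ip{F,\curl(v)}_{\DD}$, where $\curl v$ is the scalar-to-vector curl for $d=2$ and the vector curl for $d=3$. The crucial observation is that $\curl v\in\DD(\om,\RRR^d)\subset H_0(\div,\om)$ with $\div(\curl v)=0$, so that $\|\curl v\|_{H(\div,\om)} = \|\curl v\|_{L^2(\om)} \le C\,\|v\|_{H^1(\om)}$. Boundedness of $F$ on $H_0(\div,\om)$ then yields $|\ip{\curl(F),v}_{\DD}| = |\ip{F,\curl(v)}_{\DD}| \le \|F\|_{H_0(\div,\om)^*}\,\|\curl v\|_{L^2(\om)} \le C\,\|F\|_{H_0(\div,\om)^*}\,\|v\|_{H^1(\om)}$. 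Since $\DD(\om,\RRR^{\dt})$ is dense in $H_0^1(\om,\RRR^{\dt})$, $\curl F$ extends uniquely to a bounded functional there, i.e.\ $\curl(F)\in H^{-1}(\om,\RRR^{\dt})$. Combined with the previous step this is exactly $F\in H^{-1}(\curl,\om)$.

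Finally I would upgrade the identity from $\DD(\om,\RRR^{\dt})$ to all of $H_0^1(\om)$ by density. The step I expect to require the most care is checking that the right-hand pairing $\ip{F,\curl(v)}_{H_0(\div,\om)}$ is even meaningful for a general $v$, i.e.\ that $v\mapsto\curl v$ maps $H_0^1(\om,\RRR^{\dt})$ continuously into $H_0(\div,\om)$ and not merely into $H(\div,\om)$. I would obtain this by approximating $v$ with $v_n\in\DD(\om,\RRR^{\dt})$ in $H^1(\om)$: then $\curl v_n\to\curl v$ in $H(\div,\om)$ (again using $\div\curl v_n = 0$), and since each $\curl v_n\in H_0(\div,\om)$ and $H_0(\div,\om)$ is closed in $H(\div,\om)$, the limit $\curl v$ lies in $H_0(\div,\om)$. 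With this, both sides of the asserted equality are continuous functionals of $v\in H_0^1(\om)$ — the left by $\curl(F)\in H^{-1}(\om,\RRR^{\dt})$, the right by continuity of $\curl$ into $H_0(\div,\om)$ — and they agree on the dense subspace $\DD(\om,\RRR^{\dt})$, hence on all of $H_0^1(\om)$, which completes the argument.
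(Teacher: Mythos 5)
Your proof is correct, but it reaches the conclusion by a genuinely different route than the paper. The paper's proof begins with the Riesz representation theorem: it produces a $q^F \in H_0(\div,\om)$ with $\ip{F,v}_{H_0(\div,\om)} = (q^F,v) + (\div(q^F),\div(v))$, identifies $F$ with the explicit distribution $q^F - \grad\div(q^F)$, and concludes $\curl(F) = \curl(q^F) \in H^{-1}(\om,\RRR^{\dt})$ because the curl annihilates the gradient term; the asserted identity is then obtained by computing both pairings in terms of $q^F$. You dispense with any representative and argue purely by duality and density: the estimate $|\ip{F,\curl(v)}| \le \|F\|_{H_0(\div,\om)^*}\,\|\curl(v)\|_{L^2(\om)}$, which hinges on $\div(\curl(v))=0$, bounds the distributional curl of $F$ directly in the $H^{-1}$ dual norm. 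At bottom the two arguments exploit the same structural fact in adjoint forms --- the paper uses that $\curl$ kills $\grad\div(q^F)$, you use that $\div$ kills $\curl(v)$ --- but yours is the more elementary of the two (no Riesz map, only continuity of inclusions and density), while the paper's explicit formula $\curl(F)=\curl(q^F)$ makes the quantitative bound $\|\curl(F)\|_{H^{-1}} \lesssim \|F\|_{H_0(\div,\om)^*}$ visible at a glance (both proofs in fact deliver it). A further point in your favour: you explicitly verify that $\curl$ maps $H_0^1$ into $H_0(\div,\om)$ rather than merely into $H(\div,\om)$, via approximation and the closedness of $H_0(\div,\om)$; the paper's final step, which substitutes $\curl(v)$ into the Riesz identity valid only on $H_0(\div,\om)$, uses this fact silently.
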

\begin{proof}
  For any $F \in H_0(\div, \om)^*$, 
  by the Reisz representation theorem, there exists a $q^F \in
  H_0(\div, \om)$ satisfying 
  \begin{equation}
    \label{eq:5}
    \ip{ F, v}_{H_0(\div, \om)} = (q^F, v) + (\div(q^F), \div(v)).
  \end{equation}
  for $v \in H_0(\div, \om)$.  Choosing $v \in \DD(\om, \RRR^d)$ we
  conclude that $F$ is the distribution
  $ F = q^F - \grad \div(q^F) \in H^{-1}(\om, \RRR^d).$ This implies
  that $\curl(F) = \curl (q^F) \in H^{-1}(\om, \RRR^{\dt})$.  Thus
  $F \in H^{-1}(\curl, \om)$.

  Moreover, for all $\phi \in \DD(\om,\RRR^d)$, using~\eqref{eq:4},
  \begin{align*}
    \ip{ \curl(F), \phi}_{H_0^1(\om, \RRR^d)}
    & = \ip{ \curl(q^F), \phi}_{H_0^1(\om, \RRR^d)} 
      = \ip{ \curl(q^F), \phi}_{\DD(\om, \RRR^d)}
      = (q^F, \curl(\phi)).
  \end{align*}
  By the density of $\DD(\om, \RRR^d)$ in $H_0^1(\om, \RRR^d)$, we
  obtain 
  \[
  \ip{ \curl(F), v}_{H_0^1(\om, \RRR^d)}
  = (q^F, \curl(v))
  \]
  for all $v \in H_0^1(\om, \RRR^d)$. The proof is now complete due
  to~\eqref{eq:5}.
\end{proof}

In the proof of the next result, we use a
``regular decomposition'' of $H_0(\div, \om)$. Namely, there exists a
$C>0$ such that given any $v \in H_0(\div, \om)$, there is a 
$\phi_v \in H_0^1(\om, \RRR^{\dt})$ and a $z_v \in H_0^1(\om, \RRR^d)$ such
that
\begin{equation}
  \label{eq:1}
v = \curl(\phi_v) + z_v, \qquad 
\| \phi_v\|_{H^1(\om, \rr^\dt)}  +  \| z_v\|_{H^1(\om, \RRR^d)} 
\le C \| v \|_{H(\div, \om)}.  
\end{equation}
Many authors have stated this decomposition under various assumptions on
$\om$. Since there are too many to list here, we content ourselves by
pointing to~\cite[Lemma~5]{DemloHiran14} where one can find the result
under the current assumptions on $\om$ and further references.

\begin{theorem} \label{thm:eqdualspace} 
  The equality 
  \[
  H_0(\divergence, \Omega)^* = H^{-1}(\curl, \Omega)
  \]
  holds algebraically and topologically.
\end{theorem}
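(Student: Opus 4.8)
The plan is to prove the asserted identity $H_0(\div,\om)^* = H^{-1}(\curl,\om)$ by establishing two continuous inclusions and then invoking the open mapping theorem (or directly exhibiting norm equivalence) to upgrade the set-theoretic equality into a topological isomorphism. Lemma~\ref{lem:Hodiv_subset_Hmcurl} already gives one direction: every $F \in H_0(\div,\om)^*$ lies in $H^{-1}(\curl,\om)$, and the representation $F = q^F - \grad\div(q^F)$ together with $\| q^F\|_{H(\div,\om)} = \| F\|_{H_0(\div,\om)^*}$ shows this inclusion is continuous. So the real work is the reverse inclusion: given any $F \in H^{-1}(\curl,\om)$, I must show that $F$ defines a bounded linear functional on $H_0(\div,\om)$, with norm controlled by $\| F\|_{H^{-1}(\om,\RRR^d)} + \| \curl F\|_{H^{-1}(\om,\RRR^{\dt})}$.

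The natural tool for the reverse inclusion is the regular decomposition~\eqref{eq:1}. Given $v \in H_0(\div,\om)$, write $v = \curl(\phi_v) + z_v$ with $\phi_v \in H_0^1(\om,\RRR^{\dt})$ and $z_v \in H_0^1(\om,\RRR^d)$ satisfying the stability bound. I would then \emph{define} the pairing of $F$ against $v$ by
\begin{equation*}
  \ip{ F, v} := \ip{ \curl(F), \phi_v}_{H_0^1(\om,\RRR^{\dt})} + \ip{ F, z_v}_{H_0^1(\om,\RRR^d)}.
\end{equation*}
Each term on the right makes sense: $\curl(F) \in H^{-1}(\om,\RRR^{\dt}) = H_0^1(\om,\RRR^{\dt})^*$ pairs with $\phi_v$, and $F \in H^{-1}(\om,\RRR^d) = H_0^1(\om,\RRR^d)^*$ pairs with $z_v$. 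The boundedness is then immediate from Cauchy--Schwarz on the duality pairings followed by the stability estimate $\| \phi_v\|_{H^1} + \| z_v\|_{H^1} \le C\| v\|_{H(\div,\om)}$, yielding $|\ip{F,v}| \le C(\| \curl F\|_{H^{-1}} + \| F\|_{H^{-1}})\| v\|_{H(\div,\om)}$.

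The main obstacle is \textbf{well-definedness}: the decomposition $v = \curl(\phi_v) + z_v$ is not unique, so I must verify that the value assigned to $\ip{F,v}$ is independent of the chosen decomposition, and moreover that it agrees with the true action of $F$ as a distribution when $v$ is smooth. The cleanest route is to first check the formula on the dense subset $v \in \DD(\om,\RRR^d)$, where the distributional action of $F$ is unambiguous, and show the proposed pairing reproduces it --- this is exactly the content exploited in Lemma~\ref{lem:Hodiv_subset_Hmcurl}, run in reverse, using the adjoint relation $\ip{\curl(F),\phi}_{H_0^1} = \ip{F,\curl(\phi)}_{\DD}$. Independence of the decomposition would follow because any two admissible decompositions differ by a pair lying in the kernel relations, and the identity $\ip{\curl F,\phi} = (F,\curl\phi)$ (valid distributionally) forces the difference to vanish. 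By density of $\DD(\om,\RRR^d)$ in $H_0(\div,\om)$, the bounded functional defined via the decomposition is the unique continuous extension, so it coincides with a genuine element of $H_0(\div,\om)^*$ that restricts to $F$. Combining the two continuous inclusions gives set equality with two-sided norm bounds, which is precisely the algebraic-and-topological identification claimed.
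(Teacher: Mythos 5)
Your proposal is correct and follows essentially the same route as the paper: one inclusion via Lemma~\ref{lem:Hodiv_subset_Hmcurl}, the reverse inclusion by defining the extension through the regular decomposition~\eqref{eq:1} and identifying it with the original distribution via the adjoint identity $\ip{\curl(g),\phi} = \ip{g,\curl(\phi)}$ established by approximating $\phi$ with test functions, and finally the two-sided norm bounds for the topological statement. The paper handles your well-definedness concern equivalently by fixing one decomposition and then verifying that the resulting functional agrees with $g$ on all of $H_0^1(\om,\RRR^d)$, which is the same density argument you sketch.
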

\begin{proof}
  Lemma~\ref{lem:Hodiv_subset_Hmcurl} shows that
  $H_0(\div, \om)^* \subseteq H^{-1}(\curl, \om).$ 
  To show  
  $H^{-1}(\curl,\om) \subseteq H_0(\div, \om)^*$, 
  let
  $g \in H^{-1}(\curl,\om)$. Using the decomposition~\eqref{eq:1},
  set 
      \begin{equation}
        \label{eq:3}
      \ip{G, v}_{H_0(\div,\om)} := \ip{\curl(g), \phi_v}_{H_0^1(\om,
        \RRR^\dt)}
      + \ip{ g, z_v}_{H_0^1(\om, \RRR^d)}.        
      \end{equation}
    Due to the stability estimate of~\eqref{eq:1}, $G$ is a continuous
    linear functional in $H_0(\div, \om)^*$. 
    By Lemma~\ref{lem:Hodiv_subset_Hmcurl}, $G$ is $H^{-1}(\curl,
    \om)$.
    It suffices to show $G$ coincides with $g$ (as an element of $H^{-1}(\om, \RRR^d)$).  To this end, let
    $w \in H_0^1(\om, \RRR^d)$. Since
    $H_0^1(\om, \RRR^d) \hookrightarrow H_0(\div, \om),$ we have
    $\ip{ G, w}_{H_0^1(\om, \RRR^d)} = \ip{G, w}_{H_0(\div, \om)}$, so using decomposition \eqref{eq:1}
    \[
    \ip{G, w}_{H_0^1(\om, \RRR^d)}
    = \ip{ \curl(g), \phi_w}_{H_0^1(\om,\RRR^{\dt})}
    +
    \ip{ g, z_w}_{H_0^1(\om, \RRR^d)}.
    \]
    Since both $w$ and $z_w$ are in $H_0^1(\om, \RRR^d)$ the equality
    $w = \curl (\phi_w) + z_w$ implies that
    $\curl (\phi_w) \in H_0^1(\om, \RRR^d)$. 

    Let $\varphi_n \in\DD(\om, \RRR^\dt)$ converge to  $\phi_w$ in 
    $H_0^1(\om, \RRR^\dt)$.
    Using~\eqref{eq:4}, 
    \[
    \ip{ \curl(g), \varphi_n}_{H_0^1(\om, \RRR^\dt)} = 
    \ip{ \curl(g), \varphi_n}_{\DD(\om, \RRR^\dt)} = 
    \ip{ g, \curl(\varphi_n)}_{\DD(\om, \RRR^d)} = 
    \ip{ g, \curl(\varphi_n)}_{H_0^1(\om, \RRR^d)}.
    \]
    Since $\curl(g)$ is in $H^{-1}(\om, \RRR^d),$ 
    the left-most term converges to
    $ \ip{ \curl(g), \phi_w}_{H_0^1(\om, \RRR^d)}.$
    The right-most term
    $    \ip{ g, \curl(\varphi_n)}_{H_0^1(\om, \RRR^d)}$ must converge to
    the same limit and  since $ \curl (\phi_w)$ is in $H_0^1(\om, %
    \RRR^d)$, the limit must equal 
    $\ip{ g, \curl(\phi_w)}_{H_0^1(\om, \RRR^d)}.$ Thus
    $
    \ip{ \curl(g), \phi_w}_{H_0^1(\om, \RRR^d)} = 
    \ip{ g, \curl(\phi_w)}_{H_0^1(\om, \RRR^d)}
    $
    and consequently, 
    $
    \ip{G, w}_{H_0^1(\om, \RRR^d)}
    = \ip{ g, \curl(\phi_w) + z_w}_{H_0^1(\om, \RRR^d)}
    = \ip{g, w}_{H_0^1(\om, \RRR^d)}.
    $
    This proves that $G = g,$ so $g \in H_0(\div, \om)^*$.

  Finally, the stated topological equality follows if we show that
  $\| f \|_{H_0(\div,\om)^*} \sim \| f\|_{H^{-1}(\curl,\om)}$, where
  ``$\sim$'' denotes norm equivalence. Note that by~\eqref{eq:1} and triangle
  inequality,
  $
    \| \phi_v\|_{H^1(\om, \rr^\dt)}  +  \| z_v\|_{H^1(\om, \RRR^d)} 
   $ 
   $\sim$
   $
    \| v \|_{H(\div,\om)}.
  $
  For any $f \in H_0(\div,\om)^*$,
  \begin{align*}
    \| f \|_{H_0(\div,\om)^*}
    & = \sup_{v \in H_0(\div,\om)} 
    \frac{ \ip{ f, v}_{H_0(\div,\om)}}{\| v\|_{H(\div,\om)}}
      \\
    & \sim \;
    \sup_{\phi \in H_0^1(\om, \RRR^{\dt}), \; z \in H_0^1(\om, \RRR^d)}
    \frac{ \ip{ f, \curl(\phi) + z}_{H_0(\div,\om)}}
    { \| \phi\|_{H^1(\om, \rr^\dt)}  +  \| z\|_{H^1(\om, \RRR^d)}
    }    
    && \text{by \eqref{eq:1}}
    \\
    & = 
    \sup_{\phi \in H_0^1(\om, \RRR^{\dt}), \; z \in H_0^1(\om, \RRR^d)}
    \frac{ \ip{ \curl(f), \phi}_{H_0^1(\om, \RRR^\dt)}
      + \ip{f, z}_{H_0^1(\om, \RRR^d)}}
      { \| \phi\|_{H^1(\om, \rr^\dt)}  +  \| z\|_{H^1(\om,
      \RRR^d)} }    
      && \text{by Lemma~\ref{lem:Hodiv_subset_Hmcurl}}
    \\
    & \sim\;  \| f \|_{H^{-1}(\om, \RRR^d)} + \| \curl(f) \| _{H^{-1}(\om, \RRR^d)}.
  \end{align*}
  Thus the $H_0(\div,\om)^*$ and $H^{-1}(\curl,\om)$ norms are equivalent.
\end{proof}

\section{Derivation of the MCS formulation of the Stokes equations} \label{sec:derivation}

The goal of this section is to quickly derive  
a variational formulation of the mixed
stress formulation of the Stokes system
\eqref{eq::mixedstressstokes-pre}.
Using the trace of a matrix $\trace{\Stressvartest} := \sum_{i=1}^d \Stressvartest_{ii}$ we define the 
 deviatoric part by
 \[
\Dev{\tau} = \tau - \frac{\trace{\tau}}{d} \id,
\]
where $\id$ denotes the identity matrix.
Observe that due to 
$\divergence(u) = 0$, we have
\begin{align} \label{eq::devsigma}
\Dev{\Stressvar} = \Dev{\visc \nabla \Velvar} =  \visc\nabla \Velvar - \frac{\visc}{d} \trace{\nabla \Velvar} \id = \visc (\nabla \Velvar - \frac{1}{d} \divergence(\Velvar) \id) =  \visc \nabla \Velvar.
\end{align}
Thus $\Stressvar = \visc \nabla \Velvar$ in \eqref{eq::mixedstressstokes-pre} 
only represents the deviatoric part of the velocity gradient. Hence we
revise \eqref{eq::mixedstressstokes-pre}  to 
\begin{subequations}
  \label{eq::mixedstressstokes}
  \begin{alignat}{2}
  \label{eq::mixedstressstokes-a}
  \frac{1}{\nu}{\Dev\Stressvar} - \nabla \Velvar & = 0 \quad && \textrm{in } \Omega,  \\
  \label{eq::mixedstressstokes-b}
 \divergence(  \Stressvar) - \nabla \Presvar & = -\Forcevar \quad && \textrm{in } \Omega,  \\
  \label{eq::mixedstressstokes-c}
  \divergence (\Velvar) &=0 \quad&& \textrm{in } \Omega, \\
  \label{eq::mixedstressstokes-d}
  \Velvar &= 0 \quad && \textrm{on } \Gamma.
 \end{alignat}
\end{subequations}
We proceed to develop a variational formulation
for~\eqref{eq::mixedstressstokes}.

For the reasons described in the introduction, 
we want to derive a weak  formulation  where 
the velocity $u$ and the pressure $p$ 
belong respectively to the following spaces.
\begin{align*}
  \Velspace &:= \vecb{H}_0(\divergence,\Omega) = \{ \Velvar \in
              \vecb{H}(\divergence,\Omega): \; \Velvar \cdot \normal = 0 \textrm{ on } \Gamma \}, \\
  \Presspace &:= L^2_0(\Omega):=\{ \Presvartest \in L^2(\Omega): \; \int_\Omega \Presvartest \dx =0 \}.
\end{align*}
We begin 
with~\eqref{eq::mixedstressstokes-c}. 
Multiplying 
\eqref{eq::mixedstressstokes-c} with a test function $\Presvartest \in
\Presspace$ and integrating over the domain $\Omega$, we obtain the 
familiar equation
\begin{align}
  \label{eq:8}
  (\divergence(\Velvar), \Presvartest) =0.
\end{align}
Proceeding next to~\eqref{eq::mixedstressstokes-b}, which must
be tested with a $v \in V$, we see that $\sigma$ in addition to
being in $L^2(\om, \RRR^{d\times d})$, must also be such 
that $\divergence(\sigma)$ can continuously ``act'' on $v$, i.e.,
$\div(\sigma) \in 
H(\divergence, \Omega)^*$. By Theorem~\ref{thm:eqdualspace}, this is
the same as requiring that 
\begin{equation}
  \label{eq:2}
\div(\sigma) \in H^{-1}(\curl, \om).  
\end{equation}
Since any $\sigma$ in $L^2(\om, \RRR^{d\times d})$ has
$\divergence(\Stressvar) \in H^{-1}(\Omega, \rr^d)$, the non-redundant
requirement that emerges from~\eqref{eq:2} is that
$
\curl(\divergence(\Stressvar)) \in  H^{-1}(\Omega, \rr^{\dt}).
$
This leads to the definition
\begin{align*}
  \Stressspace = 
  \{ \tau \in \Hcurldiv{\Omega}
  :\; \trace{\tau}=0\}
\end{align*}
where the requirement $\trace\tau=0$ is motivated 
by \eqref{eq::devsigma}. Thus, testing~\eqref{eq::mixedstressstokes-b}
with a $v \in H_0(\div, \om)^*$ and integrating the pressure term by parts,   we have
\begin{align}
  \label{eq:9}
  \langle \divergence( \Stressvar),\Velvartest
  \rangle_{H_0(\divergence,\Omega)}
 + (\divergence(\Velvartest), \Presvar) = 0.
\end{align}
Finally, we multiply~\eqref{eq::mixedstressstokes-a} with 
a test function $\tau \in \Stressspace$ to obtain $(\visc^{-1}
\Dev\sigma, \tau) - (\nabla u, \tau) = 0$. Since 
\begin{equation}
  \label{eq:6}
  (\tau, \nabla v) = -\ip{ \div(\tau), v}_{H_0(\div, \om)}, \qquad
  \text{ for all }\tau \in \Sigma,\;  v \in H_0^1(\om , \RRR^d), 
\end{equation}
using the fact that the exact velocity is in $H_0^1(\om, \RRR^d)$, we obtain  
\begin{equation}
  \label{eq:7}
  (\visc^{-1} \Dev \sigma, \Dev \tau) +
  \ip{ \div(\tau), u}_{H_0(\div, \om)} = 0.
\end{equation}
Note that in this derivation, 
while the normal trace of the velocity is an essential boundary
condition included in the space $\Velspace$, 
the zero  tangential
velocity boundary conditions was incorporated weakly 
as a natural boundary condition in~\eqref{eq:7}.

Collecting~\eqref{eq:7},~\eqref{eq:9} and~\eqref{eq:8}, we summarize
the derived  weak formulation: given $f \in H_0(\div, \om)^*$, find $(\Stressvar, \Velvar, \Presvar) \in \Stressspace \times \Velspace \times \Presspace$ such that
\begin{align} \label{eq::mixedstressstokesweak}
    \left\{
  \begin{aligned}
    (\nu^{-1} \Dev{\Stressvar}, \Dev{\Stressvartest} ) 
    + 
    \ip{ \divergence(\Stressvartest),  \Velvar}_{H_0(\div, \om)} 
    & = 0 
    &&\text{ for all } \Stressvartest \in \Stressspace, 
    \\
    \ip{\divergence(\Stressvar), \Velvartest}_{H_0(\div, \om)}
    +
    (\divergence(\Velvartest), \Presvar) 
    & = 
    -\ip{\Forcevar, \Velvartest}_{H_0(\div, \om)} 
    &&\text{ for all } \Velvartest \in \Velspace,
    \\
    (\divergence(\Velvar), \Presvartest) &=0 
    &&\text{ for all } \Presvar \in \Presspace.
\end{aligned}
\right.                                                         
\end{align}
In the remainder of the paper, we present an approximation of the weak
formulation \eqref{eq::mixedstressstokesweak}. Its possible to prove
that~\eqref{eq::mixedstressstokesweak} is well posed. However, since we
shall focus on a discrete analysis of a nonconforming scheme based
on~\eqref{eq::mixedstressstokesweak}, we shall not make direct use of
the wellposedness in this work. As a final remark
on~\eqref{eq::mixedstressstokesweak}, 
  we note that functions in $\Stressspace$ equal its  deviatoric. 
Thus we could remove ``dev'' in the first term
  of~\eqref{eq::mixedstressstokesweak}. However, we keep it to remind
  ourselves that $\sigma$ only approximates 
  the deviatoric part of $\visc \nabla u.$

\begin{remark}[Boundary conditions] \label{rem::boundaryconditionsinf}
  In this work we only consider homogeneous Dirichlet boundary
  conditions of the velocity, $\Velvar = 0$ on $\Gamma$. However, also other types of  boundary conditions as for example slip boundary conditions for the velocity and homogeneous Neumann boundary conditions $(-\visc \nabla \Velvar + p \id) \cdot \normal  = (-\sigma + p \id) \cdot \normal = 0$ are possible. A detailed analysis regarding this topic is included in a forthcoming work.

\end{remark}

\section{A discrete formulation} \label{sec:discretevarform}

We present the discrete MCS method in this section. It is a
non-conforming method based on the MCS weak
formulation~\eqref{eq::mixedstressstokesweak}. We shall begin by
understanding the conformity requirements of $\Hcurldiv{\Omega}$ and
then present the method.

Suppose  $\Omega$ is partitioned by a  shape regular and quasiuniform triangulation $\mesh$ consisting of triangles and tetrahedrons in two and three dimensions, respectively. 
Here $h $ denotes the maximum of the diameters of all elements in $\mesh$. 
Due to quasiuniformity  $h \approx \textrm{diam}(T)$ for any $ T \in \mesh$. 
The set of element interfaces and boundaries is denoted by $\facets$. This set is further split into facets on the domain boundary $F \subset \facets \cap \Gamma =:\facetsext$ and facets in the interior $F \subset \facets \cap \Omega =:\facetsint$. There holds $\facets = \facetsint \cup \facetsext$. On each facet $F \in \facetsint$ we denote by $\jump{\cdot}$ the usual jump operator. For facets on the boundary the jump operator is just the identity. 
On each element boundary, and similarly 
on each facet on the global boundary, 
using the outward unit normal vector  $\normal$,
the normal and tangential trace of a 
 smooth enough $\Velvar : \om \to \RRR^d$ is  defined by 
\begin{align*}
  \Velvar_\normal = \Velvar \cdot \normal \quad \textrm{and} \quad \Velvar_\tangential = \Velvar - \Velvar_\normal \normal.
\end{align*}
According to this definition the normal trace is a scalar function and the tangential
trace is a vector function. In two dimensions, we may fix the symbol $t$ to
a unit tangent vector, obtained say
by rotating $n$ anti-clockwise by 90 degrees (thus $t = n^\perp$), so that 
 $\Velvar_\tangential = (\Velvar \cdot \tangential) \tangential$.  In
 a similar manner 
for a smooth enough  $\Stressvar: \om \to
 \RRR^{d \times d}$ we set
\begin{align*}
  \Stressvar_{\normal\normal} =   \Stressvar : (\normal \otimes \normal) = \normal^\trans \Stressvar \normal \quad \textrm{and} \quad \Stressvar_{\normal\tangential} =  \Stressvar \normal -  \Stressvar_{\normal\normal} \normal.
\end{align*}
Thus we have a scalar ``normal-normal component'' and a vector-valued
``normal-tangential component,'' and in two dimensions $t$ may be
thought of as  a unit tangent vector and 
$\Stressvar_{\normal\tangential} = (\tangential^\trans\Stressvar
\normal)\tangential$.

The next result shows the conformity requirements 
in $\Hcurldiv{\Omega}$. Just as continuity of the normal component 
across element interfaces is needed for 
$H(\divergence, \om)$-conformity, we shall see 
that {\em continuity of the 
normal-tangential component of tensors}
is needed for $\Hcurldiv{\Omega}$-conformity.
Let
\begin{align*}
H^m(\mesh):= \{ v \in L^2(\Omega): v|_T \in H^m(T) \text{ for all } T \in \mesh\}.
\end{align*}
For $\omega \subset \om$ we use $(\cdot, \cdot)_\omega$ to denote the inner product of $L^2(\omega),
L^2(\omega, \RRR^d),$ or $L^2(\omega, \RRR^{d \times d})$ and similarly also $|| \cdot ||^2_\omega := ( \cdot,\cdot)_\omega$.

\begin{theorem} \label{th::normtangcont}
  Suppose  $\Stressvar$ is in $H^1(\mesh, \rr^{d \times d})$ and
  $\Stressvar_{\normal\normal}|_{\d T} \in H^{1/2}(\partial T)$ for
  all elements $T \in \mesh$. Assume that the normal-tangential
  trace $\Stressvar_{nt}$ is continuous across element
  interfaces. Then $\sigma$ is in $\Hcurldiv{\Omega}$ and moreover
  \begin{equation}
    \label{eq:10}
    \ip{ \div(\sigma), v}_{H_0(\div, \om)}
    = 
    \sum_{T \in \mesh}
    \left[
      (\div(\sigma), v)_T - \ip{ v_n, \sigma_{nn}}_{H^{1/2}(\d T)}
    \right]
  \end{equation}
  for all $v \in H_0(\div, \om).$
\end{theorem}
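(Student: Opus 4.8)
The plan is to verify the integration-by-parts identity~\eqref{eq:10} first against smooth compactly supported test functions and then extend by density, reading off the conformity $\sigma \in \Hcurldiv{\Omega}$ from Theorem~\ref{thm:eqdualspace}. Concretely, I would denote by $F$ the linear functional on $H_0(\div,\om)$ defined by the right-hand side of~\eqref{eq:10},
\[
\ip{F, v} := \sum_{T \in \mesh}\left[(\div(\sigma), v)_T - \ip{v_n, \sigma_{nn}}_{H^{1/2}(\d T)}\right],
\]
and prove three things in turn: that $F$ is bounded on $H_0(\div,\om)$, that $F$ coincides with the distribution $\div(\sigma)$, and that these two facts together yield both assertions of the theorem.

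First I would establish boundedness of $F$. The volume terms are controlled by $\|\div(\sigma)\|_{L^2(\om)}\,\|v\|_{L^2(\om)}$, which is finite since $\sigma$ is piecewise $H^1$. For the boundary terms I read $\ip{v_n, \sigma_{nn}}_{H^{1/2}(\d T)}$ as the duality pairing between $v_n \in H^{-1/2}(\d T)$ and $\sigma_{nn}\in H^{1/2}(\d T)$: the normal-trace theorem for $H(\div,T)$ gives $\|v_n\|_{H^{-1/2}(\d T)} \lesssim \|v\|_{H(\div,T)}$, while the hypothesis $\sigma_{nn}|_{\d T}\in H^{1/2}(\d T)$ makes $\sum_T\|\sigma_{nn}\|^2_{H^{1/2}(\d T)}$ finite (there are finitely many elements). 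A Cauchy--Schwarz step over the elements then bounds $|\ip{F,v}|$ by $C\,\|v\|_{H(\div,\om)}$, so $F \in H_0(\div,\om)^*$.

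Next I would show that $F$ agrees with $\div(\sigma)$ acting as a distribution. For $\phi \in \DD(\om,\RRR^d)$, elementwise integration by parts (valid since $\sigma|_T\in H^1(T)$) gives $(\div(\sigma),\phi)_T = -(\sigma,\nabla\phi)_T + \int_{\d T}(\sigma_{nn}\phi_n + \sigma_{nt}\cdot\phi_t)\ds$. Because $\phi$ is smooth, the pairing $\ip{\phi_n,\sigma_{nn}}_{H^{1/2}(\d T)}$ reduces to $\int_{\d T}\phi_n\sigma_{nn}\ds$, so the normal-normal boundary terms cancel exactly against the explicit $\sigma_{nn}$ term in $F$, leaving $\ip{F,\phi} = -(\sigma,\nabla\phi) + \sum_T\int_{\d T}\sigma_{nt}\cdot\phi_t\ds$. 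On an interior facet the two adjacent elements contribute $\sigma_{nt}\cdot\phi_t$ integrals with opposite outward normals; since $\phi_t$ is single-valued and $\sigma_{nt}$ reverses sign under $n\mapsto -n$, the assumed continuity of the (oriented) normal-tangential component forces these contributions to cancel, and the exterior-facet contributions vanish because $\phi$ has compact support. Hence $\ip{F,\phi} = -(\sigma,\nabla\phi) = \ip{\div(\sigma),\phi}_{\DD(\om,\RRR^d)}$, identifying $F$ with $\div(\sigma)$ on $\DD(\om,\RRR^d)$.

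Finally I would assemble the conclusion. Since $F\in H_0(\div,\om)^*$ and $F=\div(\sigma)$ as a distribution, Theorem~\ref{thm:eqdualspace} gives $\div(\sigma)\in H^{-1}(\curl,\om)$, i.e.\ $\curl(\div(\sigma))\in H^{-1}(\om,\RRR^{\dt})$, which is exactly $\sigma\in\Hcurldiv{\Omega}$. Identity~\eqref{eq:10} then follows because $F$ and $\div(\sigma)$ are two continuous functionals on $H_0(\div,\om)$ agreeing on the dense subspace $\DD(\om,\RRR^d)$. The step I expect to require the most care is the interface bookkeeping: making the $H^{-1/2}$--$H^{1/2}$ trace pairings rigorous and tracking orientations so that the normal-tangential continuity hypothesis produces the exact cancellation. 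Note in particular that the normal-normal terms must be retained explicitly, since they do \emph{not} cancel globally --- which is precisely why they survive in~\eqref{eq:10}.
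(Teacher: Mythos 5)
Your proposal is correct and follows essentially the same route as the paper's proof: elementwise integration by parts against $\phi \in \DD(\om,\RRR^d)$, cancellation of the normal-tangential interface terms by the continuity hypothesis, an $H^{1/2}$--$H^{-1/2}$ duality bound on the surviving normal-normal terms to get membership in $H_0(\div,\om)^*$, and then Theorem~\ref{thm:eqdualspace} plus density of $\DD(\om,\RRR^d)$ in $H_0(\div,\om)$ to conclude both the conformity and the identity~\eqref{eq:10}. The only difference is organizational (you define the functional $F$ first and then identify it with $\div(\sigma)$, whereas the paper computes the distributional pairing directly and bounds it), which does not change the substance.
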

  \begin{proof}
    Using the definition of the distributional divergence and integration by parts yields
    \begin{align*}
\langle \divergence (\Stressvar), \phi \rangle = - \int_\om \Stressvar : \nabla \phi \dx = \sum\limits_{T \in \mesh} \int_T \divergence(\Stressvar) \cdot \phi \dx - \int_{\partial T} \Stressvar_\normal \cdot \phi \ds
    \end{align*}
     for any $\phi \in \mathcal{D}(\om, \rr^d)$.
    Splitting the boundary term into a tangential and a normal part we
    obtain
    \begin{align*}
\sum\limits_{T \in \mesh} - \int_{\partial T} \Stressvar_\normal \cdot
      \phi \ds &= \sum\limits_{T \in \mesh} - \int_{\partial T}
                 \Stressvar_{\normal\normal}  \phi_\normal \ds -
                 \int_{\partial T} \Stressvar_{\normal\tangential}
                 \cdot  \phi_\tangential \ds \\ 
               &= \sum\limits_{T \in \mesh} - \int_{\partial T} \Stressvar_{\normal\normal}  \phi_\normal \ds - \sum\limits_{F \in \facets} \int_{F} \jump{\Stressvar_{\normal\tangential}}  \cdot \phi_\tangential \ds.
    \end{align*}
    As $\Stressvar_{nt}$ is continuous across element interfaces, the
    second term vanishes. Hence 
    \begin{align}
      \label{eq:11}
      \langle \divergence (\Stressvar), \phi \rangle &= \sum\limits_{T \in \mesh} \int_T \divergence(\Stressvar) \cdot \phi \dx - \int_{\partial T} \Stressvar_{\normal\normal}  \phi_\normal \ds \\\nonumber
                                                     &\le \sum\limits_{T \in \mesh} ||\divergence(\Stressvar)||_{T} ||\phi||_{T} + || \Stressvar_{\normal\normal}||_{H^{1/2}(\partial T)} ||\phi_\normal||_{H^{-1/2}(\partial T)} \le \const(\sigma) ||\phi||_{H(\divergence,\om)},
    \end{align}
    where $\const(\Stressvar)$ is a constant depending on
    $\Stressvar$. Since $ \mathcal{D}(\om, \rr^d)$ is dense in
    $H_0(\div, \om)$, we conclude that $\div(\sigma)$ is in
    $H_0(\div, \om)^*$. Hence by Theorem~\ref{thm:eqdualspace},
    $\sigma \in \Hcurldiv{\Omega}$. The identity~\eqref{eq:10} also follows
    from~\eqref{eq:11} and  a density argument.
  \end{proof}

According to Theorem~\ref{th::normtangcont} one of the  sufficient
conditions for conformity in  $\Hcurldiv{\Omega}$ is normal-tangential
continuity.  Full conformity is obtained under the further
condition that  $\Stressvar_{\normal\normal} \in H^{1/2}(\partial T),$
which demands more continuity:  if the normal-normal component trace is 
continuous at vertices and edges in two and three dimensions,
respectively, then the $\sigma$ considered in Theorem~\ref{th::normtangcont}
would satisfy $\Stressvar_{\normal\normal} \in H^{1/2}(\partial
T).$ If this latter  constraint is relaxed, much simpler
elements can be constructed, as we shall see in
Section~\ref{sec::finiteelements}.

Theorem~\ref{th::normtangcont} provides the motivation for the
definition of the discrete space $\Sigma_h$ below, even though
$\Sigma_h \not\subset \Sigma.$ Let $\Poly^k(T)$ denote the space of
polynomials of degree at most~$k$ restricted to $T$. Let $\Poly^k(T,
\RRR^d)$ and $\Poly^k(T, \RRR^{d \times d})$ 
denote the space of vector and matrix-valued functions on $T$ whose
components are in $\Poly^k(T)$, and let 
\[
\Poly^k(\mesh) = \prod_{T \in \mesh} \Poly^k(T), \qquad 
\Poly^k(\mesh, \RRR^d) = \prod_{T \in \mesh} \Poly^k(T, \RRR^d),
\qquad
\Poly^k(\mesh, \RRR^{d \times d}) = \prod_{T \in \mesh} \Poly^k(T,
\RRR^{d \times d}).
\]
Define 
\begin{align}
  \label{eq:13}
  \Stressspaceh &:= \{ \tau_h \in \Poly^k(\mesh,\rr^{d \times
                  d}):\;\trace{\tau_h} = 0,\;
                  \jump{(\tau_h)_{\normal\tangential}} =0,\;
                  (\tau_h)_{\normal\tangential} \in \Poly^{k-1}(F,
                  \rr^{d-1}) \text{ for all } F \in \facets\} 
\\
  \Velspaceh &:= \Poly^k(\mesh,\rr^{d}) \cap V,\\
  \Presspaceh &:= \Poly^{k-1}(\mesh) \cap Q.
\end{align}
Note that the normal-tangential component $(\tau_h)_{nt}|_F$ of any
$\tau_h \in \Poly^k(\mesh,\rr^{d \times d})$ is a tangential vector
field whose values are in the tangent plane parallel to the facet $F$.  By a
slight abuse of notation, we do not distinguish between this tangent
plane and the isomorphic $\RRR^{d-1}$ (when we write  statements like 
``$\tau_{\normal\tangential} \in \Poly^{k-1}(F, \rr^{d-1})$'' above 
in~\eqref{eq:13}).

For the derivation of a discrete variational formulation with these
spaces, we return to~\eqref{eq::mixedstressstokesweak} and identify
these bilinear forms:
\begin{align*}
  & \ablf: L^2(\om, \RRR^{d \times d}) \times L^2(\om, \RRR^{d \times
    d})  \rightarrow \rr,
  &&  \blfone: \Velspace \times \Presspace \rightarrow \rr,
  \\
  &\ablf(\Stressvar, \Stressvartest) := 
(\nu^{-1}     \Dev{\Stressvar}, \Dev{\Stressvartest}),    
  &&\blfone(\Velvar, \Presvar) := 
     (\divergence(\Velvar), \Presvar).
\end{align*}
To handle  the terms with the divergence of stress variables, we 
define another bilinear form 
\[
b_2 : \{ \tau \in H^1(\mesh, \RRR^{d \times d}): \;
\jump{\tau_{nt}} =0\}  \times 
\{ v \in  H^1(\mesh, \RRR^d): \; \jump{v_n} =0\}  \to \rr
\]
motivated 
by the  identity~\eqref{eq:10} of Theorem \ref{th::normtangcont}:
\begin{align}
\blftwo(\tau, v) &:=  \sum\limits_{T \in \mesh} \int_T
                   \divergence(\tau) \cdot v \dx - \sum\limits_{F \in
                   \facets} \int_F \jump{\tau_{nn}} v_n \ds. 
\label{eq::blftwoequione} 
\end{align}
By  integration by parts, we find the equivalent representation
\begin{align}
  \blftwo(\tau, v)
    &=  -\sum\limits_{T \in \mesh} \int_T \tau : \nabla v \dx + \sum\limits_{F \in \facets} \int_F \tau_{nt} \cdot \jump{v_t} \ds \label{eq::blftwoequitwo}
\end{align}
since $\jump{\tau_{nt}} =0$ and $\jump{v_n} =0$.
When trial and test functions are in the domain of these forms, the
MCS weak form~\eqref{eq::mixedstressstokesweak} can be rewritten in
terms of these forms. 

The discrete  MCS method finds 
$(\Stressvarh, \Velvarh, \Presvarh) \in \Stressspaceh \times \Velspaceh \times \Presspaceh$ satisfying
\begin{align} \label{eq::discrmixedstressstokesweak}
    \left\{
  \begin{aligned}
 \ablf (\Stressvarh ,\Stressvarhtest) + \blftwo(\Stressvarhtest, \Velvarh) & = 0 &&\text{ for all } \Stressvarhtest \in \Stressspaceh,  \\
\blftwo(\Stressvarh, \Velvarhtest) + \blfone(\Velvarhtest, \Presvarh) & = (-\Forcevar, \Velvarhtest)  &&\text{ for all } \Velvarhtest \in \Velspaceh,\\
  \blfone(\Velvarh, \Presvarhtest) &=0 &&\text{ for all } \Presvarhtest \in \Presspaceh.
\end{aligned}
\right.               \tag{MCS}                                                
\end{align} 
Note that the velocity space is the well known $BDM^k$ space -- see
for example \cite{brezzi2012mixed}. The pressure space is given by
piecewise polynomials of one order less than the velocity space. By
this we have the property $\divergence(\Velspaceh) =
\Presspaceh$. Therefore, 
any weakly divergence-free 
velocity field is also strongly divergence free:
\begin{align} \label{eq::exactdivfree}
  (\divergence(\Velvarh), \Presvarhtest) = 0 \quad \Leftrightarrow \quad \divergence(\Velvarh) = 0 \quad\textrm{ in } \Omega.
\end{align}
Thus, 
any velocity field $u_h$ computed from the 
system~\eqref{eq::discrmixedstressstokesweak} is exactly
divergence free.

\section{Finite elements} \label{sec::finiteelements}

The aim of this section is to construct local finite elements that
yield the global finite element space $\Sigma_h$. We introduce
degrees of freedom (linear functionals) on each element which help us
impose the normal-tangential continuity. We also give an explicit
construction of a basis on a reference element and provide an
appropriate mapping to an arbitrary physical element of the
triangulation. This is especially useful for the implementation as there
is no need to compute a dual shape function basis by
biorthogonalization. The mapping technique permits easy extension to 
curved elements (although analysis of curved elements is beyond the
scope of this work). We then complete this   section by introducing an
interpolation operator that we shall use in the error analysis of the
next section.

The restriction of the function space $\Sigma_h$ defined
in~\eqref{eq:13} to a single element $T$ gives the local
finite element space 
$
\Sigma_k(T) := \left\{ \tau_h \in \Poly^k(T, \rr^{d \times d}):\;
  \trace{\tau_h} = 0, \; 
  (\tau_h)_{\normal\tangential} \in \Poly^{k-1}(F, \rr^{d-1}) 
  \text{ on all faces } F \in \facetT \right\},
$
where $ \facetT := \{F: F \subset \partial T \}$ is the set of element facets.
Let 
\[
  \dd := \{ M \in \rr^{d\times d}: (M : \id)  = 0\}.
\]
Then we may equivalently write
\begin{equation}
  \label{eq:14}
\Sigma_k(T) = \left\{ \tau_h \in \Poly^k(T, \dd):\;
  (\tau_h)_{\normal\tangential} \in \Poly^{k-1}(F, \rr^{d-1}) \text{ on
    all faces } F \in \facetT \right\}.  
\end{equation}
We proceed to study this space in detail, beginning with $\dd$.

\subsection{Trace-free matrices} \label{sec::tracefreematrices}

As a first step, we construct a basis for the space of matrices $\dd$
particularly suited to study normal-tangential components on facets.
Let
$V_i$, $i \in\mathcal{V}$, 
denote the vertices of $T$, 
where $\mathcal{V}:=\{0,1,2\}$ and $\mathcal{V}:=\{0,1,2,3\}$ in two
and three dimensions, respectively. Further let $F_i$ be the face
opposite to the vertex $V_i$ with the normal vector given by
$\normal_i$. 
The unit tangential vectors along edges are 
$\tangential_{ij} := (V_i - V_j) / |V_i - V_j|$. 
Finally let $\lambda_i$ be the unique barycentric coordinate function 
that equals one at the vertex $V_i$. When $d=2$, define 
three constant matrix functions, one for each $i \in \mathcal V$, 
\begin{align} \label{eq:basisface}
  S^i &:= \operatorname{dev}\big(\nabla \lambda_{i+1} \otimes \curl(
        \lambda_{i+2})\big)
\end{align}
where the indices $i+1$ and $i+2$ are taken modulo 3. When $d=3$, for
each $i \in \mathcal V$, we define the following two constant matrix functions
\begin{align} \label{eq:basisfacethree}
  S_0^i &:= \operatorname{dev}\big( \nabla \lambda_{i+1} \otimes (\nabla \lambda_{i+2}
          \times \nabla \lambda_{i+3})\big),   
          \quad S_1^i := \operatorname{dev}\big( \nabla
          \lambda_{i+2} \otimes (\nabla \lambda_{i+3} \times \nabla
          \lambda_{i+1})\big),
\end{align}
taking the indices $i+1$, $i+2$ and $i+3$ modulo~4.

\begin{lemma} \label{lem::normaltangentialprop} 
  The sets $\{ S^i : i \in \mathcal V\}$ and
  $\{ S^i_q : i \in \mathcal V, \; q =0,1 \}$ form a basis of $\dd$
  when $d=2$ and $3$, respectively.  Moreover, the normal-tangential
  component of $S^i$ and $S_q^i$ vanishes everywhere on the element
  boundary except on $F_i$, 
  \[
  S^i_{nt}|_{F_j}  = 0, 
  \qquad (S^i_q)_{nt}|_{F_j} = 0, \qquad i \ne j, \;\; F_j \in \facetT,
  \;\; i, j \in \mathcal V,
  \]
  while on $F_i$ it does not vanish. 
  When $i=j \in \mathcal V$
  and $d=3$,
  \begin{subequations}\label{eq:i=jcase}
  \begin{align}
    \tangential_{i+2, i+3}^\trans S^i_0 n_i     = 0, 
    \quad
    \tangential_{i+1,i+2}^\trans S^i_0 \normal_i \neq 0, 
    \quad 
    \tangential_{i+3,i+1}^\trans S^i_0 \normal_i \neq 0,
    \\
    \tangential_{i+2,i+3}^\trans S^i_1 \normal_i \neq 0,
    \quad
    \tangential_{i+1,i+2}^\trans S^i_1 \normal_i \neq 0,
    \quad 
    \tangential_{i+3,i+1}^\trans S^i_1 \normal_i  = 0.
  \end{align}
  \end{subequations}

\end{lemma}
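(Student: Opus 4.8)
The plan is to reduce everything to one elementary identity for rank-one deviatoric matrices and then read off the conclusions from the combinatorics of the barycentric gradients. First I would record the affine-geometry facts I need on the simplex $T$: each $\nabla\lambda_k$ is a constant vector collinear with the outward normal $\normal_k$ of the opposite face $F_k$, the directional identity $\nabla\lambda_k\cdot(V_p-V_q)=\delta_{kp}-\delta_{kq}$ holds, and any $d$ of the $d+1$ gradients $\{\nabla\lambda_k\}$ are linearly independent (they sum to zero and span $\rr^d$). In particular $\curl(\lambda_k)=(\nabla\lambda_k)^\perp$ is orthogonal to $\normal_k$ when $d=2$, and $\nabla\lambda_a\times\nabla\lambda_b$ is orthogonal to both $\normal_a$ and $\normal_b$ when $d=3$, while $\det[\nabla\lambda_a,\nabla\lambda_b,\nabla\lambda_c]$ (resp.\ the $2\times2$ determinant in two dimensions) is nonzero exactly when the listed indices are distinct.

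The central observation is that for any vectors $a,b$ and any unit normal $\normal$,
\[
\operatorname{dev}(a\otimes b)\,\normal=(b\cdot\normal)\,a-\tfrac{1}{d}(a\cdot b)\,\normal ,
\]
so its tangential part equals $(b\cdot\normal)\big(a-(a\cdot\normal)\normal\big)$, which vanishes if and only if either the ``input'' is tangential ($b\cdot\normal=0$) or the ``output'' is normal ($a$ collinear with $\normal$). Every matrix in \eqref{eq:basisface}, \eqref{eq:basisfacethree} has the form $\operatorname{dev}(a\otimes b)$, so for each facet $F_j$ with $j\neq i$ I would just identify which of the two mechanisms fires. For $S^i$ in 2D ($a=\nabla\lambda_{i+1}$, $b=\curl(\lambda_{i+2})$): on $F_{i+2}$ the input is tangential since $\curl(\lambda_{i+2})\cdot\normal_{i+2}=0$, and on $F_{i+1}$ the output is normal since $\nabla\lambda_{i+1}$ is collinear with $\normal_{i+1}$. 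For $S^i_0$ in 3D ($a=\nabla\lambda_{i+1}$, $b=\nabla\lambda_{i+2}\times\nabla\lambda_{i+3}$): on $F_{i+2}$ and $F_{i+3}$ the input is tangential, and on $F_{i+1}$ the output is normal; the analogous bookkeeping handles $S^i_1$. This yields the vanishing of the normal--tangential components off $F_i$.

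On $F_i$ itself neither mechanism fires, so the tangential part is nonzero; for the sharper statement \eqref{eq:i=jcase} I would pair the identity with an edge tangent $\tangential_{pq}$ of $F_i$. Since $\tangential_{pq}\perp\normal_i$, the normal term drops and
\[
\tangential_{pq}^\trans\,\operatorname{dev}(a\otimes b)\,\normal_i=(b\cdot\normal_i)\,(\tangential_{pq}\cdot a),
\]
where $b\cdot\normal_i\neq0$ by triple-product nondegeneracy. For $S^i_0$ the output is $a=\nabla\lambda_{i+1}$, and $\tangential_{pq}\cdot\nabla\lambda_{i+1}=(\delta_{i+1,p}-\delta_{i+1,q})/|V_p-V_q|$ reads off as $0,\neq0,\neq0$ for $\tangential_{i+2,i+3},\tangential_{i+1,i+2},\tangential_{i+3,i+1}$; for $S^i_1$ the output is $\nabla\lambda_{i+2}$ and the same $\delta$-arithmetic gives the complementary pattern, matching \eqref{eq:i=jcase}.

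Finally I would assemble the basis claim. Since $\dim\dd=d^2-1$ equals the number of candidates ($3$ for $d=2$, $8$ for $d=3$), it suffices to prove linear independence, and I would do so via the normal--tangential trace functionals $\tau\mapsto\big(\tau_{\normal\tangential}|_{F_j}\big)_{j\in\mathcal{V}}$ valued in $\bigoplus_j\rr^{d-1}$, whose target has dimension $(d+1)(d-1)=d^2-1$. Each candidate is supported in a single $F_i$-block, and inside that block \eqref{eq:i=jcase} shows $(S^i_0)_{nt}$ and $(S^i_1)_{nt}$ lie along the two distinct directions of the tangent plane of $F_i$ orthogonal to $\tangential_{i+2,i+3}$ and to $\tangential_{i+3,i+1}$ respectively, hence are independent; images in disjoint blocks and independent within each block force linear independence of the matrices. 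The main obstacle is not any single step but the careful mod-$3$/mod-$4$ index bookkeeping to confirm that for every $j\neq i$ exactly one vanishing mechanism applies, which the unifying rank-one identity keeps manageable.
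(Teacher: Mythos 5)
Your proposal is correct and follows essentially the same route as the paper: the unifying identity $\tangential^\trans\operatorname{dev}(a\otimes b)\,\normal=(b\cdot\normal)(a\cdot\tangential)$ is exactly the paper's factorizations $t_p^\trans s_{i,j}n_p=(\nabla\lambda_i\cdot t_p)(\nabla\lambda_j\cdot t_p)$ in 2D and $t_p^\trans s_{i,j,k}n_p=c\,(n_i\cdot t_p)(t_{il}\cdot n_p)$ in 3D, once one identifies $a\parallel n_i$ and $\nabla\lambda_j\times\nabla\lambda_k\parallel t_{il}$. The basis argument (linear independence read off facet-by-facet from the normal--tangential traces, spanning by dimension count) is likewise the paper's, which you merely spell out in more detail.
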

\begin{proof}
  The first statement of the lemma follows once we prove the remaining
  statements.  Indeed, the linear independence of the given sets
  follows by examining their normal-tangential components
  facet-by-facet using the remaining statements. The spanning property
  follows by counting.

  To prove the remaining statements, 
  we start with the two dimensional case. We define
  \begin{align*}
    s_{i,j} = \Dev{\grad\lambda_i \otimes \curl(\lambda_j)}.
  \end{align*}
  Then $ s_{i+1, i+2} = S^i.$
Since the $nt$-component of the identity vanishes, 
  for any $p \in \mathcal V$ and
  any $t_p \in \curl(\lambda_p)$
  \[
  t_p^\trans  s_{i,j} n_p =
  t_p^\trans  
  \big[ \grad
  \lambda_i \otimes \curl( \lambda_j) \big] n_p
  = 
  (\nabla \lambda_i \cdot t_p) (\nabla
  \lambda_j \cdot t_p).
  \]
  All the stated properties in the two-dimensional case now follow
  easily from this identity together with the fact that $T$ is not
  degenerate.

  Next, consider the $d=3$ case. Let $s_{i,j,k} = \operatorname{dev}\big(
    \nabla \lambda_i \otimes (\nabla \lambda_j \times \nabla \lambda_k)
    \big)$. 
  If $i, j, k, l$ is any permutation of $\mathcal V,$
  by elementary manipulations, we see that for any $p \in \mathcal V$
  and any $t_p \in n_p^\perp$, 
  \begin{equation}
    \label{eq:15}
    t_p^\trans s_{i,j,k} n_p  = c (n_i \cdot t_p) (t_{il} \cdot n_p).
  \end{equation}
  for some $c\ne 0$. Therefore on any facet $F_p$, 
  we have $t_p^\trans(S^i_0)n_p = t_p^\trans(s_{i+1, i+2,
    i+3})n_p = c (n_{i+1} \cdot t_p) (t_{i+1, i} \cdot n_p)$ which
  vanishes for all $p \ne i$ since $n_{i+1}\cdot t_{i+1}=0$ and
  $t_{i+1, i} \cdot n_{i+2} =  t_{i+1, i} \cdot n_{i+3}
  =0$. Similarly, we conclude that $(S^i_1)_{nt} = 0 $ on all facets 
  except $F_i$. Since~\eqref{eq:15} also implies 
  \[
  t_{jk}^\trans s_{i,j,k} n_l =0, 
  \quad 
  t_{ki}^\trans s_{i,j,k} n_l \ne 0, 
  \quad 
  t_{ji}^\trans s_{i,j,k} n_l \ne 0,
  \]
  the statements in~\eqref{eq:i=jcase}  also follow.

  \end{proof}

\subsection{Normal-tangential bubbles}

Let the element  space of interior normal-tangential bubbles be
defined by 
\[
  \mathcal{B}_k(T) := \left\{ \tau_h \in \Stressspace_k(T): (\tau_h)_{\normal \tangential} =0 \right\}.
\]

\begin{lemma} \label{lem::structurebubblespace}
  Any $b \in \mathcal{B}_k(T)$ 
  can be expressed as either 
  \begin{align}
    \label{eq:16}
    b = \sum\limits_{i \in \mathcal{V}} \mu_i \lambda_i S^i
    \quad \textrm{or} \quad      
    b = \sum_{q=0}^1\;
    \sum\limits_{i \in \mathcal{V}} \mu^q_i\; \lambda_i S_q^i,
  \end{align}    
  for $d=2$ or $3$, respectively, where 
  $\mu_i,\mu^0_i,\mu^1_i \in \Poly^{k-1}(T)$. Consequently, 
  \[
  \dim \mathcal{B}_k(T) = 
  \left\{ 
    \begin{aligned}
      & \frac{3}{2} k (k+1),  && \text{ if } d=2, 
      \\
      &\frac{8}{6} k (k+1)(k+2),  && \text{ if } d=3.
    \end{aligned}
  \right.
  \]
\end{lemma}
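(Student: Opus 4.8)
The plan is to exploit the facet-localization built into the matrix bases supplied by Lemma~\ref{lem::normaltangentialprop}. Since $\{S^i\}$ (when $d=2$) and $\{S^i_q\}$ (when $d=3$) are bases of $\dd$ consisting of \emph{constant} matrices, every $b\in\Poly^k(T,\dd)$ admits a unique expansion with scalar polynomial coefficients,
\[
b=\sum_{i\in\mathcal V} c_i\,S^i \quad(d=2),
\qquad
b=\sum_{i\in\mathcal V}\sum_{q=0}^1 c_i^q\,S_q^i\quad(d=3),
\]
where $c_i,c_i^q\in\Poly^k(T)$. The bubble constraint $b_{nt}=0$ is then imposed facet by facet. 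First I would evaluate $b_{nt}$ on a fixed facet $F_j$. Because $n$ and $\tangential$ are constant on $F_j$, and because $S^i_{nt}|_{F_j}=0$ and $(S^i_q)_{nt}|_{F_j}=0$ for $i\neq j$ by Lemma~\ref{lem::normaltangentialprop}, only the $i=j$ contribution survives, so the condition reduces to a statement about the surviving coefficient traces $c_j|_{F_j}$ (resp.\ $c_j^0|_{F_j},c_j^1|_{F_j}$).

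In two dimensions $S^j_{nt}|_{F_j}$ is a nonzero constant, so $b_{nt}|_{F_j}=c_j|_{F_j}\,S^j_{nt}|_{F_j}$ vanishes precisely when $c_j|_{F_j}=0$. In three dimensions $b_{nt}|_{F_j}=c_j^0|_{F_j}\,(S^j_0)_{nt}|_{F_j}+c_j^1|_{F_j}\,(S^j_1)_{nt}|_{F_j}$ is a vector in the two-dimensional tangent plane of $F_j$. The key step here, and the step I expect to be the main obstacle, is to show that $(S^j_0)_{nt}|_{F_j}$ and $(S^j_1)_{nt}|_{F_j}$ are linearly independent, for then $b_{nt}|_{F_j}=0$ forces $c_j^0|_{F_j}=c_j^1|_{F_j}=0$. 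I would read this off directly from the relations~\eqref{eq:i=jcase}: testing against the edge directions of $F_j$ (for tangential $\tangential$ the normal-normal part drops out, so $\tangential^\trans(S^j_q)_{nt}|_{F_j}=\tangential^\trans S^j_q n_j$), one finds $(S^j_0)_{nt}|_{F_j}\perp\tangential_{j+2,j+3}$ while $(S^j_1)_{nt}|_{F_j}\perp\tangential_{j+3,j+1}$. Since these two edge directions of the nondegenerate triangle $F_j$ are not parallel, and both vectors are nonzero (again by~\eqref{eq:i=jcase}), they span distinct one-dimensional subspaces and are therefore independent.

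It then remains to convert the trace conditions into divisibility. As $F_j$ is exactly the zero set of the irreducible linear barycentric coordinate $\lambda_j$, any $c\in\Poly^k(T)$ with $c|_{F_j}=0$ is divisible by $\lambda_j$, i.e.\ $c=\lambda_j\mu$ for a unique $\mu\in\Poly^{k-1}(T)$. Applying this to each surviving coefficient gives $c_i=\lambda_i\mu_i$ (resp.\ $c_i^q=\lambda_i\mu_i^q$), which is the claimed representation~\eqref{eq:16}. Conversely, every such sum lies in $\mathcal{B}_k(T)$: on $F_j$ the $i\neq j$ terms vanish by the localization of the normal-tangential components, and the $i=j$ terms vanish because $\lambda_j|_{F_j}=0$. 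For the dimension I would note that the map $(\mu_i)\mapsto\sum_i\lambda_i\mu_i S^i$ (and its $d=3$ analogue) is injective, since a vanishing sum forces $\lambda_i\mu_i\equiv0$ by uniqueness of the $\dd$-basis expansion, whence $\mu_i=0$. Thus $\dim\mathcal{B}_k(T)$ equals the number of coefficients times $\dim\Poly^{k-1}(T)$, namely $3\binom{k+1}{2}=\tfrac32 k(k+1)$ when $d=2$ and $8\binom{k+2}{3}=\tfrac86 k(k+1)(k+2)$ when $d=3$.
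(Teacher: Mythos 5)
Your proposal is correct and follows essentially the same route as the paper's proof: expand $b$ in the constant $\dd$-basis with polynomial coefficients, use the facet localization of Lemma~\ref{lem::normaltangentialprop} to reduce the bubble condition on $F_j$ to the vanishing of the $i=j$ coefficient trace(s), and conclude divisibility by $\lambda_j$. The only difference is that you spell out the $d=3$ case (in particular the linear independence of $(S^j_0)_{nt}|_{F_j}$ and $(S^j_1)_{nt}|_{F_j}$ via~\eqref{eq:i=jcase}), which the paper dismisses as ``similar,'' and your argument there is sound.
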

\begin{proof}      
  We only show the proof in the $d=2$ case as the $d=3$ case is
  similar.  By Lemma~\ref{lem::normaltangentialprop} applied to the
  matrix $b(x)$, we obtain 
  \begin{equation}
    \label{eq:12}
      b(x) = \sum\limits_{i \in \mathcal{V}} a_i(x) S^i,
  \end{equation}
  and matching degrees, we conclude that $a_i \in \Poly^{k}(T)$.  Let
  $c_i $ equal the constant value of $S^i_{nt}|_{F_i}$, which is
  nonzero by Lemma~\ref{lem::normaltangentialprop}.
  Then $\tangential_i^\trans b(x) \normal_i = c_i a_i(x) = 0$ for all
  $x \in F_i$. Since $a_i(x)$ vanishes on $F_i$, it must take the form 
  $a_i(x) = \mu_i(x) \lambda_i(x) $ for
  some $\mu_i \in \Poly^{k-1}(T)$. 
  This proves~\eqref{eq:16}.
  
  The dimension count follows from~\eqref{eq:16}: again considering
  only the $d=2$ case, since $\mu_i \in \Poly^{k-1}(T)$ and
  $\{ \lambda_i S^i: i \in \mathcal V\}$ is a linearly independent
  set, the expansion in~\eqref{eq:16} shows that
  $\dim \mathcal{B}_k(T)$ is $3 \times \dim \Poly^{k-1}(T)$.
\end{proof}

\subsection{Mappings}

Suppose $\Tref$ is the unit simplex ($d=2$ or 3) and $T \in \mesh.$
Let $\phi_T: \Tref \rightarrow T$ be an  affine homeomorphism
and set $F_T := \phi_T'$. Due to the
shape regularity of the mesh,
\begin{align} \label{eq::jacobiscaling}
  || F_T ||_\infty \approx h \quad \textrm{ and } \quad || F_T^{-1} ||_\infty \approx h^{-1} \quad \textrm{ and } \quad |\mathrm{det}(F_T)| \approx h^d.
\end{align}
The proper transformation for functions in the $H(\divergence)$-conforming velocity space $\Velspaceh$ is the Piola transformation given by
$ \piola(\hat{\Velvar}_h) := (\mathrm{det} F_T)^{-1} F_T \hat{\Velvar}_h,
$
where $\hat{\Velvar}_h$ is a given polynomial on the reference element.
The Piola map preserves the normal components on facets, so is useful
for enforcing normal continuity.
For functions demanding tangential continuity, 
the proper transformation is the covariant transformation given by
$
  \covariant(\hat{\Velvar}_h) := F_T^{-\trans} \hat{\Velvar}_h. 
$
Therefore, to enforce the normal-tangential continuity required of
tensors in $\Stressspaceh,$ we combine the above  two transformations
and define
\begin{align}
  \label{eq:M}
  \covariantpiola(\hat{\Stressvar}_h) := \frac{1}{\mathrm{det}(F_T)}  F_T^{-\trans} \hat{\Stressvar}_h F_T^\trans, 
\end{align}
where $\hat{\Stressvar}_h \in \Sigma_k(\hat T).$
Of particular interest to us is how the normal-tangential components on
facets $F \in \facetT$ map. To study this, we use the restrictions of
the map $\phi_T$ to a reference facet $\hat F$ as well as to a
reference edge $\hat E$ (a $d-2$ subsimplex) in the $d=3$ case,
denoted by $\phi_T|_{\hat F}$ and $\phi_T|_{\hat{E}}$,
respectively. Their gradients are denoted by
$F_T^F = (\phi_T|_{\hat F})'$ and $F_T^E = (\phi_T|_{\hat{E}})'$.  In
the next result, $\hat n$ and $n$ denote the outward unit normals
vector on~$\hat F$ and $F$, respectively, while $\hat t$
denotes a unit tangent
vector along $\hat E$ (when $d=3$) or $\hat F$ (when $d=2$), and
similarly, $t$ denotes a
unit tangent 
vector along $E$ or $F$.

\begin{lemma} 
  \label{lem::nortangcont}
  Using the above notations and letting $\tau = \mathcal{M}(\hat
  \tau)$, we have 
  \[
  c \, \tangential^\trans \tau \normal
  =  \hat{\tangential}^{\trans}    \hat{\tau}  \hat{\normal},
  \qquad 
  \text{ where } 
  c = 
  \left\{
    \begin{aligned}
      & \det(F_T^F)^2  && \text{ if } d=2,
      \\
      & \det(F_T^F) \det(F_T^E)  && \text{ if } d=3.
    \end{aligned}
  \right.
  \]
  Furthermore, 
  \begin{align*}
    \trace{\hat{\tau}} = 0 \quad \Leftrightarrow \quad 
    \trace{\tau} = 0 .
  \end{align*}
\end{lemma}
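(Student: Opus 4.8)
The plan is to prove the two claims separately: the trace equivalence follows from a one-line algebraic manipulation, whereas the normal--tangential identity requires combining the transformation rules for tangent and normal vectors under $\phi_T$ with a surface-Jacobian (Nanson-type) identity that supplies the constant $c$.

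For the trace statement, I would expand using the definition \eqref{eq:M} and exploit the cyclic invariance of the trace together with $F_T^\trans F_T^{-\trans} = \id$:
\[
\trace{\covariantpiola(\hat\tau)} = \frac{1}{\det(F_T)}\,\trace{F_T^{-\trans}\hat\tau F_T^\trans} = \frac{1}{\det(F_T)}\,\trace{\hat\tau\,F_T^\trans F_T^{-\trans}} = \frac{1}{\det(F_T)}\,\trace{\hat\tau}.
\]
Since $\det(F_T)\neq 0$, the equivalence $\trace{\hat\tau}=0 \Leftrightarrow \trace{\tau}=0$ is immediate.

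For the normal--tangential identity, the first step is the purely algebraic expansion
\[
\tangential^\trans\tau\normal = \frac{1}{\det(F_T)}\,(F_T^{-1}\tangential)^\trans\,\hat\tau\,(F_T^\trans\normal).
\]
Next I would insert the transformation rules for the unit vectors. Tangent vectors push forward by $F_T$, so $\tangential = F_T\hat{\tangential}/|F_T\hat{\tangential}|$ and hence $F_T^{-1}\tangential = \hat{\tangential}/|F_T\hat{\tangential}|$; normals transform contravariantly, $\normal = F_T^{-\trans}\hat{\normal}/|F_T^{-\trans}\hat{\normal}|$, and hence $F_T^\trans\normal = \hat{\normal}/|F_T^{-\trans}\hat{\normal}|$ (with orientations fixed by the outward convention). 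Substituting yields
\[
\tangential^\trans\tau\normal = \frac{1}{\det(F_T)\,|F_T\hat{\tangential}|\,|F_T^{-\trans}\hat{\normal}|}\;\hat{\tangential}^\trans\hat\tau\,\hat{\normal},
\]
so the claim reduces to showing $\det(F_T)\,|F_T\hat{\tangential}|\,|F_T^{-\trans}\hat{\normal}| = c$.

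The main obstacle is this last geometric identification, which is where the facet and edge maps enter. I would invoke the classical surface-Jacobian (Nanson) identity $\det(F_T^F) = \det(F_T)\,|F_T^{-\trans}\hat{\normal}|$, relating the $(d-1)$-dimensional measures on $\hat F$ and $F$, to reduce the scalar to $|F_T\hat{\tangential}|\,\det(F_T^F)$. When $d=2$ the reference tangent $\hat{\tangential}$ lies along the edge $\hat F$, so $|F_T\hat{\tangential}|$ equals the length scaling $\det(F_T^F)$ of the facet map, giving $c=\det(F_T^F)^2$. When $d=3$ the reference tangent lies along the subsimplex edge $\hat E$, so $|F_T\hat{\tangential}|$ equals the length scaling $\det(F_T^E)$ of the edge map, giving $c=\det(F_T^F)\det(F_T^E)$. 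The delicate point throughout is sign/orientation bookkeeping: the Nanson identity and the normalization of $\tangential,\normal$ to unit vectors are only accurate up to sign, and one must check that the consistent choice of outward normals and pushed-forward tangents makes the signed determinants $\det(F_T^F)$ and $\det(F_T^E)$ combine into exactly the stated $c$. The conceptual reason the scaling is governed by $F_T^F$ (and $F_T^E$) rather than by $F_T$ is that, after normalization, only the tangential length scaling along the facet (resp.\ edge) survives.
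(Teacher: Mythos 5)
Your proposal is correct and follows essentially the same route as the paper: the paper's proof likewise substitutes the transformation rules $\normal = \frac{\det(F_T)}{\det(F_T^F)}F_T^{-\trans}\hat{\normal}$ and $\tangential = \frac{1}{\det(F_T^E)}F_T\hat{\tangential}$ into $\tangential^\trans\tau\normal$ and cancels, and handles the trace claim by the same cyclic-invariance identity $\trace{F_T^{-\trans}\hat\tau F_T^\trans}=\trace{\hat\tau}$. The only cosmetic difference is that you derive the scaling constants from the Nanson identity and the edge/facet length scalings, whereas the paper quotes those two relations directly.
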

\begin{proof}
  The unit normals and tangents on the reference and mapped
  configurations are related by

\begin{align*}
  \normal = \frac{\mathrm{det}(F_{T})}{\mathrm{det}(F_T^{F})} 
F_{T}^{-\trans} \hat{\normal} \quad \textrm{and} \quad
 \tangential = \frac{1}{\mathrm{det}(F_T^{E})} F_{T} \hat{\tangential},
\end{align*}
with the understanding that in two dimensions we should replace $F_T^E$ by $ F_T^F$. Then
\begin{align*}
  {\tangential}^\trans \tau {\normal} &= \frac{1}{\mathrm{det}(F_T^{E})} \hat{\tangential}^\trans F_{T}^\trans  \frac{1}{\mathrm{det}(F_{T})} F_{T}^{-\trans} \hat{\tau} F_{T}^\trans  \frac{\mathrm{det}(F_{T})}{\mathrm{det}(F_T^{F})} F_{T}^{-\trans} \hat{\normal} = \frac{1}{\mathrm{det}(F_T^{E})\mathrm{det}(F_T^{F})} \hat{\tangential}^\trans \hat{\tau}\hat{\normal}.
\end{align*}

Finally, the statement on traces follows from $\trace{F_T^{-\trans} \hat{\tau} F_T^\trans} = \trace{\hat{\tau}}.$
\end{proof}

\subsection{Definition of the finite element}

We define the local finite element in the formal style of
\cite{ciarlet2002finite} (also adopted in other texts, e.g., \cite{ernguermondfiniteelements, braess}) as a triple $(T, \Sigma_k(T), \Phi(T))$, where
the geometrical element $T$ is either a triangle or a tetrahedron, the
space $\Sigma_k(T)$  is defined by~\eqref{eq:14}, and $\Phi(T)$ is a set of linear
functionals representing the degrees of freedom defined as follows.
The first group of degrees of freedom is associated to the set of
element facets $\facetT,$
the set of $d-1$ subsimplices of $T$: 
for each $F \in \facetT$, define
\begin{align} \label{eq::facedofs}
    \Phi^F(\tau) :=
  \left\{ \int_F \tau_{\normal\tangential}\cdot r \ds: \; r \in
  \Poly^{k-1}(F, \rr^{d-1}) \right\}.
\end{align}
The next 
group is the set of interior  degrees of freedom  given by 
\begin{align} \label{eq::eldofs}
  \Phi^T  (\tau) := \left\{ \int_T \tau : F_T \hat \eta F_T^{-1}  \dx
  : \; \hat \eta \in
  \mathcal{B}_k(\hat T)  \right\}.
\end{align}
Then set 
\begin{equation}
  \label{eq:dofs}
  \Phi(T) := \Phi^T \cup \{ \Phi^F : \; F \in \facetT\}.
\end{equation}
We proceed to prove that
this set of degrees of freedom is unisolvent and that the  number of degrees
of freedom matches the dimension of 
$\Stressspace_k(T)$.

\begin{theorem} \label{thm:finiteelement}
  The triple $(T,\Stressspace_k(T), \Phi(T) )$ defines a finite
  element and 
  \[
  \dim (\Sigma_k(T) ) = 
  \left\{
    \begin{aligned}
      & \frac 3 2 (k+1)(k+2) - 3,  && \text { if } d=2, 
      \\
      & \frac 8 6 (k+1)(k+2)(k+3) - 8(k+1), && \text { if } d=3.
    \end{aligned}
  \right.
  \]
\end{theorem}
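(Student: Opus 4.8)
The plan is to establish unisolvence by the standard counting-plus-injectivity argument for finite elements. First I would verify that the total number of degrees of freedom in $\Phi(T)$ equals $\dim \Sigma_k(T)$; then, since the number of functionals matches the dimension, it suffices to prove that the only $\tau \in \Sigma_k(T)$ annihilated by every functional in $\Phi(T)$ is $\tau = 0$. The dimension formula for $\Sigma_k(T)$ itself should be computed directly from~\eqref{eq:14}: functions in $\Poly^k(T, \dd)$ form a space of dimension $(d^2-1)\cdot\dim\Poly^k(T)$ (since $\dim \dd = d^2-1$), and we impose the constraint that $(\tau_h)_{nt}$ on each facet lies in $\Poly^{k-1}(F,\rr^{d-1})$ rather than the full $\Poly^k(F,\rr^{d-1})$. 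The number of these constraints, summed over the $d+1$ facets, must be subtracted. I expect the stated formulas to emerge after carefully accounting for how many independent conditions the per-facet degree reduction actually imposes, which is the first place where the combinatorics could go wrong (one must be sure the facet constraints are independent and not over-counted at lower-dimensional intersections).

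Next I would count the degrees of freedom to confirm they agree with $\dim\Sigma_k(T)$. The facet functionals $\Phi^F$ contribute $\dim\Poly^{k-1}(F,\rr^{d-1})$ per facet, times $d+1$ facets; the interior functionals $\Phi^T$ contribute $\dim\mathcal{B}_k(T)$, which Lemma~\ref{lem::structurebubblespace} gives explicitly as $\tfrac{3}{2}k(k+1)$ for $d=2$ and $\tfrac{8}{6}k(k+1)(k+2)$ for $d=3$. Adding the facet and interior counts and checking this equals the asserted $\dim\Sigma_k(T)$ is a routine but essential arithmetic verification; I would present it compactly for each dimension.

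The core of the proof is injectivity. Suppose $\tau \in \Sigma_k(T)$ satisfies $\Phi^F(\tau)=0$ for every facet $F$ and $\Phi^T(\tau)=0$. From vanishing of the facet functionals~\eqref{eq::facedofs}: on each facet $F$, the normal-tangential component $\tau_{nt}|_F$ is in $\Poly^{k-1}(F,\rr^{d-1})$ (by membership in $\Sigma_k(T)$) and is $L^2(F)$-orthogonal to all of $\Poly^{k-1}(F,\rr^{d-1})$; hence $\tau_{nt}|_F = 0$ on every facet. This means $\tau \in \mathcal{B}_k(T)$, the bubble space. Now the interior functionals come into play: $\Phi^T(\tau)=0$ means $\int_T \tau : F_T \hat\eta F_T^{-1}\dx = 0$ for all $\hat\eta \in \mathcal{B}_k(\hat T)$. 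The natural strategy is to choose $\hat\eta$ so that $F_T\hat\eta F_T^{-1}$ ranges over a test set that pairs nondegenerately with $\mathcal{B}_k(T)$ — ideally showing that $\{F_T\hat\eta F_T^{-1} : \hat\eta \in \mathcal{B}_k(\hat T)\} = \mathcal{B}_k(T)$, so that taking the test function equal to $\tau$ itself (or a suitable companion) forces $\int_T \tau : \tau' = 0$ with $\tau'$ spanning the same space, yielding $\tau = 0$.

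The main obstacle I anticipate is precisely this interior step: one must verify that the conjugation map $\hat\eta \mapsto F_T\hat\eta F_T^{-1}$ carries $\mathcal{B}_k(\hat T)$ isomorphically onto $\mathcal{B}_k(T)$. This requires checking two things — that the map preserves the trace-free condition (which follows since $\trace(F_T\hat\eta F_T^{-1}) = \trace(\hat\eta)$), and, more delicately, that it preserves the vanishing of the normal-tangential component on facets. The latter is the crux: the bubble-preservation under conjugation is not the same transformation as the covariant-Piola map $\MM$ of~\eqref{eq:M} used for the global conformity, so I would need to show separately that $F_T\hat\eta F_T^{-1}$ has zero $nt$-trace on each physical facet whenever $\hat\eta$ does on the reference facets, likely by invoking the relation between reference and physical normals/tangents from Lemma~\ref{lem::nortangcont} or a direct computation on the bubble structure~\eqref{eq:16}. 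Once this isomorphism is in hand, the pairing is nondegenerate (the $L^2$ inner product restricted to the finite-dimensional space $\mathcal{B}_k(T)$ is positive definite), and choosing the test function appropriately forces $\tau=0$, completing unisolvence.
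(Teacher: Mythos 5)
Your overall skeleton matches the paper's proof (facet functionals force $\tau_{nt}=0$ on every facet so $\tau\in\mathcal{B}_k(T)$; interior functionals then kill the bubble; a counting argument gives the dimension), but the way you propose to close the interior step rests on a claim that is false. The conjugation map $\hat\eta \mapsto F_T\hat\eta F_T^{-1}$ does \emph{not} carry $\mathcal{B}_k(\hat T)$ onto $\mathcal{B}_k(T)$: it preserves the trace, but it transforms neither normals nor tangents correctly, so it does not preserve the vanishing of the normal--tangential trace. (Concretely, for $F_T=\mathrm{diag}(2,1)$ on the unit triangle, $F_T\hat S^1F_T^{-1}=\bigl(\begin{smallmatrix}0.5&0\\0.5&-0.5\end{smallmatrix}\bigr)$ applied to the physical hypotenuse normal $n\propto(1,2)^\trans$ gives $(0.5,-0.5)^\trans$, which is not parallel to $n$, whereas $\hat S^1$ does have vanishing $\hat n\hat t$-component on the reference hypotenuse.) The correct resolution is not to transform the test functions but to transform $\tau$ by the \emph{adjoint} of the test map with respect to the Frobenius product: $\int_T \tau : F_T\hat\eta F_T^{-1}\,dx = \int_T F_T^\trans \tau F_T^{-\trans} : \hat\eta\,dx = \int_{\hat T} \MM^{-1}(\tau) : \hat\eta\,d\hat x$, since $F_T^\trans\tau F_T^{-\trans} = (\det F_T)^{-1}\MM^{-1}(\tau)$. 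It is $\MM^{-1}$ (not conjugation) that maps $\mathcal{B}_k(T)$ into $\mathcal{B}_k(\hat T)$, by Lemma~\ref{lem::nortangcont}; taking $\hat\eta = \MM^{-1}(\tau)$ then gives $\|\MM^{-1}(\tau)\|_{\hat T}^2=0$, hence $\tau=0$. Without this adjoint identity your pairing argument has no nondegeneracy to appeal to.

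On the dimension count, you correctly flag the risk of over- or under-counting the facet constraints, but your plan to ``verify the count first'' and prove independence of the constraints directly is both harder than necessary and logically out of order. The paper sidesteps independence entirely: imposing the facet constraints can reduce the dimension by \emph{at most} their number, giving $\dim\Sigma_k(T)\ge N_{\Sigma_k}$ where $N_{\Sigma_k}=(d^2-1)\dim\Poly^k(T)-(d+1)(d-1)\dim\bigl[\Poly^k(F)\setminus\Poly^{k-1}(F)\bigr]$; the unisolvency of the $N_{\Sigma_k}$ functionals (established first, independently of the count) gives $\dim\Sigma_k(T)\le N_{\Sigma_k}$, and equality follows. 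You should adopt this order --- injectivity first, then the two inequalities --- rather than attempting a standalone proof that the per-facet degree-reduction conditions are linearly independent.
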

\begin{proof}
  To prove the unisolvency of the degrees of freedom, consider 
  a  $\tau_h \in \Stressspace_k(T)$ satisfying 
  $\phi(\tau_h) = 0$ for all $\phi \in \Phi(T)$. 
  As $(\tau_h)_{\normal\tangential} \in \Poly^{k-1}(F,
  \rr^{d-1})$ 
  the facet degrees of freedom 
   $\phi(\tau_h) = 0$ imply that $\tau_h \in
   \mathcal{B}_k(T)$. The interior degrees of freedom then yield
   \begin{align*}
     0  = \int_T \tau_h : F_T \hat \eta  F_T^{-1}
     = 
     \int_T F_T^\trans \tau_h F_T^{-\trans} : \hat \eta  
     = 
     \int_T (\det F_T)^{-1} \covariantpiola^{-1}( \tau_h) : \hat \eta  
     = 
     \int_{\hat T} 
     \covariantpiola^{-1}( \tau_h) : \hat \eta  
   \end{align*}
   for all $\hat \eta \in \mathcal{B}_k(\hat T)$. 
   By Lemma~\ref{lem::nortangcont}, $\covariantpiola^{-1}( \tau_h)$ is
   in $\mathcal{B}_k(\hat T)$, so this yields $\covariantpiola^{-1}(
   \tau_h) = 0$ and thus $\tau_h =0$.

   It only remains to prove the dimension count.  
   The dimension of 
   $\Stressspace_k(T)$ is given by
   $\dim \Poly^k(T,\dd)$ minus the
   number of linearly independent conditions represented by the
   constraints
   $ (\tau_h)_{\normal\tangential} \in \Poly^{k-1}(F, \rr^{d-1})
   $ for all $F \in \facetT$
   that every $\tau_h \in \Sigma_k(T)$ must satisfy. 
   Therefore,
   \begin{align*}
    \dim (\Stressspace_k(T))
     & \ge 
       \dim \Poly^k(T,\dd)  - 
       \dim \big[ \Poly^{k}(F, \rr^{d-1}) \setminus \Poly^{k-1}(F, \rr^{d-1})\big]
     \\ 
     &  = (d^2-1)  \dim \Poly^k(T) -  (d+1) (d-1) 
       \dim\big[  \Poly^{k}(F) \setminus \Poly^{k-1}(F)\big].
   \end{align*}
   Let $N_{\Sigma_k}$ denote the number on the right hand side. Using
   Lemma~\ref{lem::structurebubblespace} to count the number of
   degrees of freedom in $\Phi(T)$, we find that it coincides with
   $N_{\Sigma_k}.$ Since $N_{\Sigma_k}$ linear functionals on
   $\Sigma_k(T)$ are unisolvent, we conclude that
   $\dim (\Sigma_k(T)) = N_{\Sigma_k},$ which after simplification
   agrees with the statement of the theorem.
\end{proof}

\subsection{Construction of shape functions}

In view of the previous results, we can now write down shape
functions in barycentric coordinates. Its not difficult  to see that 
on any triangle $T$, the  set of
functions
\begin{equation}
  \label{eq:24}
\lambda_{i+1}^{\alpha_1}\lambda_{i+2}^{\alpha_2} S^i, 
\qquad 
\lambda_i^{\beta_0} \lambda_{i+1}^{\beta_1}\lambda_{i+2}^{\beta_2}
(\lambda_i S^i),   
\end{equation}
for all $i \in \mathcal{V},$ and all multi-indices $(\alpha_1, \alpha_2)$ and
$(\beta_0, \beta_1, \beta_2)$, with $\alpha_i \ge 0, \; \beta_i\ge 0$
having length $\alpha_1 + \alpha_2= \beta_0 + \beta_1 + \beta_2 =k-1$,
form a basis for $\Sigma_k(T)$. Similarly, when $T$ is a tetrahedron,
the following set is a basis for $\Sigma_k(T)$: 
\begin{equation}
  \label{eq:18} 
\lambda_{i+1}^{\alpha_1}\lambda_{i+2}^{\alpha_2} \lambda_{i+3}^{\alpha_3} 
S^i_q, 
\qquad 
\lambda_i^{\beta_0} \lambda_{i+1}^{\beta_1}\lambda_{i+2}^{\beta_2}\lambda_{i+3}^{\beta_3}
(\lambda_i S^i_q), 
\end{equation}
for all $i \in \mathcal{V},$ $q =0, 1$, and all multi-indices
$(\alpha_1, \alpha_2, \alpha_3)$ and
$(\beta_0, \beta_1, \beta_2, \beta_3)$, with
$\alpha_i \ge 0, \; \beta_i\ge 0$ having length
$\alpha_1 + \alpha_2 + \alpha_3= \beta_0 + \beta_1 + \beta_2 + \beta_3
=k-1$.
Instead of proving the linear independence of functions
in~\eqref{eq:24} or~\eqref{eq:18}, in the remainder of this section, 
we opt
to do so for another set of reference element shape functions
that we have implemented. By using a Dubiner basis instead of
barycentric monomials, the ensuing construction produces better
conditioned matrices.

We start by defining some basic notations needed for the
construction. 
The reference element is given by
\begin{alignat*}{3}
  \Tref &:= \{ (x_1,x_2) \in \rr^2: 0 \le x_1,x_2 \textrm{ and } x_1+x_2 \le 1 \} & \qquad\textrm{for} &\qquad d=2,\\
  \Tref &:= \{ (x_1,x_2,x_3) \in \rr^3: 0 \le x_1,x_2,x_3~\textrm{ and } x_1+x_2+x_3 \le 1 \} &\qquad\textrm{for} & \qquad  d=3.
\end{alignat*}
For $d=2$ we further define the reference faces and the corresponding normal and tangential vectors (see left picture in Figure \ref{fig::referenceelement}) by
\begin{align*}
  \begin{aligned}
  \hat{F}_0 &= \{ (x_1,x_2) \in \rr^2: 0 \le x_1,x_2 \le 1, x_1+x_2 = 1\}, \quad &&\normalhat_0:= \frac{1}{\sqrt{2}}(1,1)^\trans, \quad &&&\tangentialhat_0:= \frac{1}{\sqrt{2}}(-1,1)^\trans,  \\
  \hat{F}_1 &= \{ (0,x_2) \in \rr^2: 0\le x_2 \le 1\}, \quad &&\normalhat_1:= (-1,0)^\trans, \quad &&&\tangentialhat_1:=(0,-1)^\trans, \\
  \hat{F}_2 &= \{ (x_1,0) \in \rr^2: 0\le x_1 \le 1\}, \quad &&\normalhat_2:= (0,-1)^\trans, \quad &&&\tangentialhat_2:=(1,0)^\trans.
\end{aligned}
\end{align*}
For the three dimensional case we have
\begin{align*}
  \hat{F}_0 &= \{ (x_1,x_2,x_3) \in \rr^3: 0\le x_1,x_2,x_3 \le 1,  x_1+x_2+x_3 = 1\}, \\
  \hat{F}_1 &= \{ (0,x_2,x_3) \in \rr^3: 0\le x_2,x_3 \le 1, 0 \le x_2+x_3 \le 1\}, \\
  \hat{F}_2 &= \{ (x_1,0,x_3) \in \rr^2: 0\le x_1,x_3 \le 1, 0 \le x_1+x_3 \le 1\}, \\
  \hat{F}_3 &= \{ (x_1,x_2,0) \in \rr^2: 0\le x_1,x_2 \le 1, 0 \le x_1+x_2 \le 1\},
\end{align*}
with the associated normal and tangential vectors (see right picture in Figure \ref{fig::referenceelement}) 
\begin{align*}
\begin{aligned}
  \normalhat_0:= \frac{1}{\sqrt{3}}(1,1,1)^\trans, \quad &&\tangentialhat_{01}:= \frac{1}{\sqrt{2}}(-1,1,0)^\trans, \quad &&&\tangentialhat_{02}:= \frac{1}{\sqrt{2}}(0,1,-1)^\trans,  \\
  \normalhat_1:= (-1,0,0)^\trans, \quad &&\tangentialhat_{11}:= (0,-1,0)^\trans, \quad &&&\tangentialhat_{12}:= (0,0,-1)^\trans,  \\
  \normalhat_2:= (0,-1,0)^\trans, \quad &&\tangentialhat_{21}:= (1,0,0)^\trans, \quad &&&\tangentialhat_{22}:= (0,0,-1)^\trans,  \\
  \normalhat_3:= (0,0,-1)^\trans, \quad &&\tangentialhat_{31}:= (1,0,0)^\trans, \quad &&&\tangentialhat_{32}:= (0,-1,0)^\trans.
\end{aligned}
\end{align*}
\begin{figure}
  \begin{center}
    \begin{minipage}{0.45\textwidth}
       \begin{tikzpicture}
         \node[] (tref2d) at (0,0) {\includegraphics[width=0.85\textwidth,trim=1cm 0 0 0]{./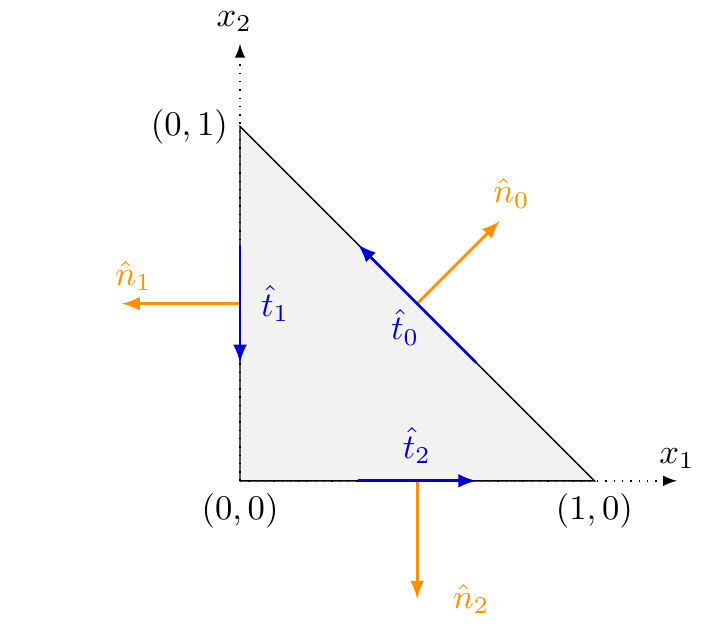}};
         \end{tikzpicture}
       \end{minipage}       
    \begin{minipage}{0.45\textwidth}
 \begin{tikzpicture}
   \node[] (tref2d) at (0,0) {\includegraphics[width=1\textwidth,trim=0.5cm 0 0 0]{./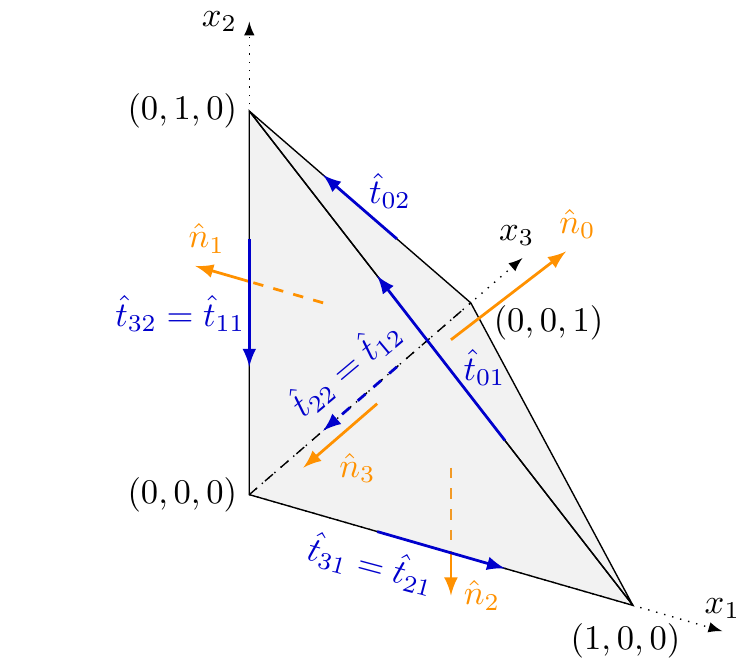}};
 \end{tikzpicture}
\end{minipage}
\end{center}
\vspace{-0.5cm}
\caption{The reference element and the corresponding normal and tangential vectors in two and three space dimensions.} \label{fig::referenceelement}
\end{figure}

In Section \ref{sec::tracefreematrices} we presented the construction of element wise constant matrices. Applying these techniques on the reference element (including a scaling with a proper constant) we derive for $d=2$ the matrices given by 
\begin{align}\label{eq:refbasisface}
  \hat{S}^0 := {\sqrt{2}} \begin{pmatrix} -1 & 0 \\ 0 & 1 \end{pmatrix} \quad \textrm{and} \quad \hat{S}^1 := \begin{pmatrix} 0.5 & 0 \\ 1 & -0.5 \end{pmatrix} \quad \textrm{and} \quad \hat{S}^2 := \begin{pmatrix} 0.5 & -1 \\ 0 & -0.5 \end{pmatrix},
\end{align}
and for $d=3$ the matrices
\begin{align} \label{eq:refbasisfacethree}
  \begin{aligned}
    \hat{S}^{0}_0 &= \sqrt{6}\begin{pmatrix}  \frac{-2}{3} & 0 & 0 \\ 0 & \frac{1}{3} &0 \\ 0 & 0 & \frac{1}{3} \end{pmatrix},    
    \hat{S}^{1}_0 =\begin{pmatrix}  \frac{1}{3} & 0 & 0 \\ 1 & \frac{-2}{3} &0 \\ 0 & 0 & \frac{1}{3} \end{pmatrix},
    \hat{S}^{2}_0 = \begin{pmatrix} \frac{-2}{3} & 1 & 0 \\ 0 & \frac{1}{3} &0 \\ 0 & 0 & \frac{1}{3} \end{pmatrix},
    \hat{S}^{3}_0 = \begin{pmatrix} \frac{-2}{3} & 0 & 1 \\ 0 & \frac{1}{3} &0 \\ 0 & 0 & \frac{1}{3} \end{pmatrix},
    \\
    \hat{S}^{0}_1 &= \sqrt{6}\begin{pmatrix} \frac{1}{3} & 0 & 0 \\ 0 & \frac{1}{3} &0 \\ 0 & 0 & \frac{-2}{3} \end{pmatrix},
    \hat{S}^{1}_1 = \begin{pmatrix} \frac{1}{3} & 0 & 0 \\ 0 & \frac{1}{3} &0 \\ 1 & 0 & \frac{-2}{3} \end{pmatrix},    
    \hat{S}^{2}_1 = \begin{pmatrix} \frac{1}{3} & 0 & 0 \\ 0 & \frac{1}{3} &0 \\ 0 & 1 & \frac{-2}{3} \end{pmatrix},    
    \hat{S}^{3}_1 = \begin{pmatrix} \frac{1}{3} & 0 & 0 \\ 0 & \frac{-2}{3} &1 \\ 0 & 0 & \frac{1}{3} \end{pmatrix}.
   \end{aligned}
\end{align}
Note that in order to follow the ideas described in Section \ref{sec::tracefreematrices} we took a particular choice of the numbering of the vertices of $\hat{T}$ and the corresponding tangential vectors.
Similar as in Lemma \ref{lem::normaltangentialprop}, a elementary calculations show that 
\begin{align} \label{ntorthogonalref}  
  \begin{aligned}   
  \tangentialhat^\trans_j\hat{S}^i \normalhat_j &= \delta_{ij}    &&\textrm{and}  &&&\tangentialhat^\trans_j \lambda_i\hat{S}^i \normalhat_j =0  &&&&\textrm{for}  &&&&&i,j =0,1,2,\\
  \tangentialhat^\trans_{jl}\hat{S}_q^{i} \normalhat_{j} &= \delta_{ij}\delta_{ql}  &&\textrm{and}  &&&\tangentialhat^\trans_{jl} \lambda_i\hat{S}_q^{i} \normalhat_{j} =0   &&&&\textrm{for} &&&&&i,j \in {0,1,2,3} \quad \textrm{and} \quad  q,l = 0,1.
\end{aligned}
\end{align}
and that $\{\hat{S}^i: i=0,1,2\}$ and $\{\hat{S}_q^i: i=0,1,2,3; q=0,1\}$ is a basis for $\dd$ in two and three dimensions, respectively. Based on these constant matrices we now construct shape function for the local stress space $\Stressspace_k(\hat{T})$.

We start with the two diemsnional case. Let $l_i(x_1)$ be the Legendre polynomial of order $i$ and let $l_i^S(x_1,x_2) := x_2^il_i(x_1/x_2)$ be the scaled Legendre polynomial of order $i$. Further let $p_i^j(x_1)$ be the Jacobi polynomial of order $i$ with coefficients $\alpha = j$, $\beta=0$. For a detailed definition we refer to the works \cite{Abramowitz,andrews}. We then define
\begin{align} \label{eq::dubinertwo}
  \hat{r}_{ij}(\lambda_\alpha, \lambda_\beta, \lambda_\gamma) := l_i^S(\lambda_\beta -\lambda_\alpha, \lambda_\alpha + \lambda_\beta)  p_j^{2i+1}(\lambda_\gamma -\lambda_\alpha -\lambda_\beta).
\end{align}
The polynomials $\hat{r}_{ij}(\lambda_\alpha, \lambda_\beta, \lambda_\gamma)$ with $0 \le i+j \le k$ and an arbitrary permutation $(\alpha,\beta,\gamma)$ of $(0,1,2)$ form a basis of the polynomial space $\Poly^k(\hat{T}, \rr)$. Next note that $ p_0^{2i+1}$ is constant, thus $\hat{r}_{ij}(\lambda_\alpha, \lambda_\beta, \lambda_\gamma)=\hat{r}_{i0}(\lambda_\alpha, \lambda_\beta)$. Then there holds that for $0 \le i \le k$ the restriction of the polynomials $\hat{r}_{i0}(\lambda_{j+1}, \lambda_{j+2})|_{\hat{F}_j}$, where the indices $j+1$ and $j+2$ of the barycentric coordinate functions are taken modulo $3$, form a basis of the polynomial space $\Poly^k(\hat{F}_j,\rr)$ (see chapter 3.2 in \cite{karniadakis2013spectral} or in \cite{dubiner1991spectral}). By this we define a local basis of the stress space by
\begin{align*}
  \hat{\Psi}_k^F&:= \{ \hat{S}^j \hat{r}_{i0}(\lambda_{j+1}, \lambda_{j+2}): j=0,1,2  \textrm{ and } 0 \le i \le k-1 \}, \\ 
  \hat{\Psi}^T_k &:= \{ \lambda_j \hat{S}^j \hat{r}_{il}(\lambda_0,\lambda_1,\lambda_2): j=0,1,2 \textrm{ and } 0 \le i+l \le k-1\}. 
\end{align*}
For $d = 3$ we define similar as before
\begin{align} \label{eq::dubinerthree}
  \hat{r}_{ijl}(\lambda_\alpha,&\lambda_\beta, \lambda_\gamma,  \lambda_\delta) \\
                          &:= l_i^S(\lambda_\beta -\lambda_\alpha, \lambda_\alpha + \lambda_\beta)  p_j^{2i+1,S}(\lambda_\gamma -\lambda_\alpha -\lambda_\beta, \lambda_\gamma +\lambda_\alpha +\lambda_\beta ) p_l^{2i+2j+2,S}(\lambda_\delta -\lambda_\alpha -\lambda_\beta-\lambda_\gamma),\nonumber
\end{align}
where $p_i^{j,S}(x_1,x_2):= x_2^i p_i^{j}(x_1/x_2)$ is the scaled Jacobi polynomial. Again we have that $\hat{r}_{ijl}(\lambda_\alpha,\lambda_\beta, \lambda_\gamma,  \lambda_\delta)$ with $0 \le i+j+l \le k$ and an arbitrary permutation $(\alpha,\beta,\gamma,\delta)$ of $(0,1,2,3)$ defines a basis for $\Poly^k(\hat{T},\rr)$ and that for $0 \le  i + l \le k$ the restriction $\hat{r}_{il0}(\lambda_{j+1}, \lambda_{j+2}, \lambda_{j+3})|_{\hat{F}_{j}}$ is a basis of $\Poly^k(\hat{F}_{j},\rr)$ where the indices of the barycentric coordinate functions are now taken modulo 4. By this we define the local basis on the reference tetrahedron by
\begin{align*}
  \hat{\Psi}^F_k &:= \{ \hat{S}_q^{j} \hat{r}_{il0}(\lambda_{j+1}, \lambda_{j+2}, \lambda_{j+3}):j=0,1,2,3 \textrm{ and } q=0,1 \textrm{ and }   0 \le i+l \le k-1 \} \\ 
  \hat{\Psi}^T_k &:= \{ \lambda_j  \hat{S}_q^{j}  \hat{r}_{ilg}(\lambda_0,\lambda_1,\lambda_2,\lambda_3): j=0,1,2,3 \textrm{ and } q=0,1  \textrm{ and }0 \le i+l+g \le k-1 \}. 
\end{align*}
\begin{theorem}
  The set of functions $\{\hat{\Psi}_k^F \cup  \hat{\Psi}_k^T\}$ is a basis for $\Stressspace_k(\hat{T})$.
\end{theorem}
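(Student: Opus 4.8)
The plan is to combine a dimension count with a linear-independence argument: since the cardinality of $\hat\Psi_k^F \cup \hat\Psi_k^T$ will be shown to equal $\dim \Stressspace_k(\hat T)$, which was already computed in Theorem~\ref{thm:finiteelement}, it suffices to prove that these functions lie in $\Stressspace_k(\hat T)$ and are linearly independent. First I would check membership in $\Stressspace_k(\hat T)$. Because each $\hat S^j$ (resp.\ $\hat S^j_q$) belongs to $\dd$, every candidate function is a trace-free matrix polynomial; the face functions have degree at most $k-1$ and the interior functions degree at most $k$, so all lie in $\Poly^k(\hat T,\dd)$. For the normal-tangential constraint I would invoke the orthogonality relations~\eqref{ntorthogonalref}: the $nt$-component of $\hat S^j\,\hat r_{i0}|_{\hat F_m}$ equals $\delta_{jm}\,\hat r_{i0}\,\tangentialhat_m$ (with the obvious two-matrix analogue $\hat S^j_q$ in three dimensions), which lies in $\Poly^{k-1}(\hat F_m,\rr^{d-1})$; meanwhile each interior function carries a factor $\lambda_j$ that vanishes on $\hat F_j$, so its $nt$-component vanishes on every facet, placing it in $\mathcal{B}_k(\hat T)\subset\Stressspace_k(\hat T)$.

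Next I would prove linear independence by testing $nt$-traces facet by facet. Suppose a linear combination of all the functions vanishes. Restricting to a facet $\hat F_m$ and extracting the $nt$-component annihilates every interior function (each is a bubble) and, by~\eqref{ntorthogonalref}, every face function attached to $\hat F_j$ with $j\neq m$; what survives is $\sum_i c_{im}\,\hat r_{i0}(\cdot)|_{\hat F_m}$ (and its two-component version when $d=3$). Since the restricted Dubiner polynomials $\hat r_{i0}|_{\hat F_m}$ for $0\le i\le k-1$ form a basis of $\Poly^{k-1}(\hat F_m)$ — the facet-restriction property cited after~\eqref{eq::dubinertwo} and~\eqref{eq::dubinerthree} — all face coefficients $c_{im}$ vanish. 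What then remains is a combination of interior functions only; linear independence of $\{\lambda_j \hat S^j\}$ together with the fact that $\{\hat r_{il}\}$ is a basis of $\Poly^{k-1}(\hat T)$ — precisely the bubble structure of $\mathcal{B}_k(\hat T)$ established in Lemma~\ref{lem::structurebubblespace} (via Lemma~\ref{lem::normaltangentialprop}) — forces the remaining coefficients to vanish as well.

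For the count, the number of face functions is the number of facets times the number of constant matrices per facet ($1$ if $d=2$, $2$ if $d=3$) times $\dim\Poly^{k-1}(\hat F)$, and the number of interior functions equals $\dim\mathcal{B}_k(\hat T)$ as given by Lemma~\ref{lem::structurebubblespace}. These totals add up to $N_{\Sigma_k}=\dim\Stressspace_k(\hat T)$ from the proof of Theorem~\ref{thm:finiteelement}. A linearly independent subset of $\Stressspace_k(\hat T)$ of this cardinality is therefore a basis.

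I expect the main obstacle to be the facet-restriction basis property of the scaled Dubiner polynomials, namely that $\hat r_{i0}|_{\hat F_m}$ (and $\hat r_{il0}|_{\hat F_m}$ in three dimensions) restrict to a genuine basis of the polynomial space on the facet without any collapse in degree. This is the one ingredient that is specific to the scaled Legendre/Jacobi construction of~\eqref{eq::dubinertwo}--\eqref{eq::dubinerthree} rather than generic linear algebra, and it is exactly the reason those particular scalings are chosen; I would rely on the cited references for this fact. Everything else reduces to the $nt$-decoupling encoded in~\eqref{ntorthogonalref} and the bubble structure already proven.
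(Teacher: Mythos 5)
Your proposal is correct and follows essentially the same route as the paper's proof: membership in $\Stressspace_k(\hat{T})$ via trace-freeness and the degree/$nt$-trace properties, linear independence by extracting normal-tangential traces facet by facet using~\eqref{ntorthogonalref} and the facet-restriction basis property of the Dubiner polynomials, then killing the interior coefficients via the bubble structure, and finally matching the cardinality against $N_{\Stressspace_k}$ from Theorem~\ref{thm:finiteelement}. The one ingredient you flag as the potential obstacle --- that $\hat{r}_{i0}|_{\hat{F}_m}$ (resp.\ $\hat{r}_{il0}|_{\hat{F}_m}$) restricts to a genuine basis on the facet --- is handled in the paper exactly as you propose, by citing the spectral-element literature.
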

\begin{proof}
  We start with the two dimensional case. An elementary calculation shows that the functions $\lambda_i\hat{S}^i$ with $i=0,1,2$ are linearly independent.
  Let $\alpha_i^j \in \rr$ and $\beta_{il}^j \in \rr$ be arbitrary coefficients and define $\hat{S}^j_i:= \hat{S}^j \hat{r}_{i0}(\lambda_{j+1}, \lambda_{j+2})$ and $\hat{B}^j_{il}:= \lambda_j \hat{S}^j \hat{r}_{il}(\lambda_0,\lambda_1,\lambda_2)$. We assume that
  \begin{align*}
    \sum\limits_{j=0}^2\sum\limits_{i=0}^{k-1} \alpha_i^j \hat{S}^j_i + \sum\limits_{j=0}^2\sum\limits_{i=0}^{k-1}\sum\limits_{l=i}^{k-1} \beta_{il}^j \hat{B}^j_{il} = \begin{pmatrix} 0 & 0 \\ 0 & 0  \end{pmatrix},
  \end{align*}
  and show that this induces that all coefficients are equal to zero. This then proves the linear independency of $\{\hat{\Psi}_k^F \cup  \hat{\Psi}_k^T\}$. Let $\hat{F}_g$ with $g = 0,1,2$ be an arbitrary reference face. Due to \eqref{ntorthogonalref}, there holds
  \begin{align*}
    \tangentialhat^T_g\left(\sum\limits_{j=0}^2\sum\limits_{i=0}^{k-1} \alpha_i^j \hat{S}^j_i + \sum\limits_{j=0}^2\sum\limits_{i=0}^{k-1}\sum\limits_{l=i}^{k-1} \beta_{il}^j \hat{B}^j_{il}\right)\normalhat_g
    = \tangentialhat^T_g\left(\sum\limits_{i=0}^{k-1} \alpha_i^g \hat{S}^g_i\right)\normalhat_g
    = \tangentialhat^T_g\left(\sum\limits_{i=0}^{k-1} \alpha_i^g \hat{S}^g \hat{r}_{i0}(\lambda_{g+1}, \lambda_{g+2}) \right)\normalhat_g = 0. 
  \end{align*}
  As $\hat{r}_{i0}(\lambda_{g+1}, \lambda_{g+2})$ is a polynomial basis on $\hat{F}_g$, and $\hat{S}^g, \normalhat_g$ and $\tangentialhat_g$ are constant it follows that all coefficients $\alpha_i^g$ have to be zero. As $g$ was arbitrary we conclude $\alpha_i^j=0$ for $j=0,1,2$ and $0 \le i \le k-1$.

  As the functions $\lambda_i\hat{S}^i$ are linearly independent we have for each $g = 0,1,2$ (due to the assumption at the beginning)
  \begin{align*}
\sum\limits_{i=0}^{k-1}\sum\limits_{l=i}^{k-1} \beta_{il}^g \hat{B}^g_{il}  = \sum\limits_{i=0}^{k-1}\sum\limits_{l=i}^{k-1} \beta_{il}^g \hat{r}_{il} \lambda_g \hat{S}^g = \begin{pmatrix} 0 & 0 \\ 0 & 0  \end{pmatrix}.
  \end{align*}
  As $\hat{r}_{il} \lambda_g$ is a basis for $ \lambda_g \Poly^{k-1}(\hat{T})$, and the last equation holds true for all points in $\hat{T}$ we conclude $\beta_{il}^g = 0$ for $0 \le i+l \le k-1$. As $g$ was arbitrary we conclude that all coefficients are equal to zero. Note that by $\trace{S^i} = 0$, all shape function in  $\{\hat{\Psi}_k^F \cup  \hat{\Psi}_k^T\}$ are trace free and are further tensor valued polynomials up to order $k$. Further the normal tangential trace is only a polynomial up to order $k-1$ thus all shape functions belong to $\Stressspace_k(\hat{T})$. Counting the dimensions we have by Theorem \ref{thm:finiteelement}
  \begin{align*}
    \big| \hat{\Psi}_k^F\big| + \big| \hat{\Psi}_k^T \big| = 3k + \frac{3 k(k+1)}{2} =N_{\Stressspace_k},    
  \end{align*}
  what concludes the proof. In three dimensions we proceed similar. The linearly independence can be shown with the same steps. Further with the same arguments all shape functions belong to $\Stressspace_k(\hat{T})$. Again by Theorem \ref{thm:finiteelement} and
  \begin{align*}
    \big| \hat{\Psi}_k^F\big| + \big| \hat{\Psi}_k^T \big|
    = 8 \frac{k(k+1)}{2} + 8 \frac{k(k+1)(k+2)}{6}  =N_{\Stressspace_k}, 
  \end{align*}
  we conclude the proof.
\end{proof}

\begin{remark} \label{rem::basis}
  Note how the basis was separated into shape functions associated to
  faces ($\hat{\Psi}^F_k$) and shape functions associated to the element
  interior ($\hat{\Psi}^T_k$). The polynomial degrees in each group can be
  separately chosen to construct a variable-degree global finite
  element space (e.g., for $hp$ adaptivity). E.g., the span of the union of 
  $\Psi^F_{k_1}$ and 
  $\Psi^T_{k_2}$ gives an element space that has normal-tangential
  trace of degree $k_1-1$ and inner (bubble) shape functions of degree $k_2$.
\end{remark}

\subsection{Construction of a global basis}


Using the local basis on the reference triangle $\hat{T}$ we can now simply define a global basis for the stress space $\Stressspaceh$. This is done in the usual way. Using the  mapping $\covariantpiola$ and a basis function $\hat{S} \in \{ \hat{\Psi}_k^T \cup \hat{\Psi}_k^F \}$ we define the restriction of a global shape function $S$ (with support on a patch) on an arbitrary physical element $T \in \mesh$ by
\begin{align*}
  S := \covariantpiola(\hat{S}).
\end{align*}

Next we identify all topological entities, vertices and faces, of the physical element $T$ with the corresponding entities of the global mesh. This identification is needed as faces and vertices coincide for adjacent physical elements. Note that the global orientation of the faces (and edges) plays an important role in order to assure (normal-tangential) continuity. This is a well known difficulty: see \cite{zaglmayr2006high} for a detailed discussion regarding this topic. By this we construct global basis functions which are, restricted on a physical element $T \in \mesh$, always a  mapped basis function of the basis defined on the reference element $\hat{T}$.

Further note that due to Lemma \ref{lem::nortangcont} the resulting basis functions are normal tangential continuous, thus $[\![ S_{\normal \tangential} ]\!]=0$.  To see this let $\phi_1$ be the mapping of an arbitrary element $T_1$ and let $\phi_2$ be the mapping of an element $T_2$ such that $F = T_1 \cap T_2$. There exists a reference face $\hat{F} \subset \partial \hat{T}$ such that $F = \phi_1(\hat{F}) = \phi_1(\hat{F})$ (in the sense of a set) and $\phi_1|_{\hat{F}} = \phi_2|_{\hat{F}}$ (in the sense of equivalent functions). By this, and the same ideas for an reference edge $\hat{E}$ in the three dimensional case, the constant $c$ in Lemma \ref{lem::nortangcont} is the same for both mappings. In two dimensions we have the identity $S_{\normal\tangential} =  ({\tangential}^\trans S {\normal}) t$, thus Lemma \ref{lem::nortangcont} implies normal-tangential continuity of $S$ because $S$ was a mapped basis functions of the reference element. In three dimensions $S_{\normal\tangential}$ is a tangent vector in $F$. Each tangent vector can be represented as a linear combination of two arbitrary edge tangent vectors $t_i \subset \partial F$. By Lemma \ref{lem::nortangcont} we deduce that the scalar values ${\tangential_i}^\trans S {\normal}$ are preserved, thus again we have normal tangential continuity. Taking all functions in $\{ \hat{\Psi}_k^T \cup \hat{\Psi}_k^F \}$ and mapping them to each element separately results in a basis for
$\Stressspaceh$.

\subsection{An interpolation operator for the stress space.}

We finish this section by introducing an interpolation operator for the stress space and showing an approximation result.
Using the global  degrees of freedom of ${\Stressspaceh}$ 
a canonical interpolation operator 
 $I_{\Stressspaceh}$ can be defined as usual. On each $T \in \mesh$,
 the interpolant $(I_{\Stressspaceh}\sigma)|_T$
coincides  with the canonical local interpolant $I_T(\sigma|_T)$ defined, as
 usual, using the local degrees of freedom in $\Phi(T)$, by 
 \begin{equation}
   \label{eq:20}
    \phi(\sigma - I_T \sigma)= 0 \quad \text{ for all } \phi \in \Phi(T).
 \end{equation}
 Recalling  the map $\MM$ from~\eqref{eq:M}, note that 
 $\covariantpiola^{-1}(\sigma) = \mathrm{det}(F_T^{F}) F_F^\trans \sigma
 F_T^{-\trans}$. 

 \begin{lemma} \label{lem:Imap} 
   For any $\sigma \in H^1(T, \RRR^{d \times d}),$
  \[
  \MM^{-1} (I_T \sigma) = I_{\hat T} (\MM^{-1} (\sigma)).
  \]
\end{lemma}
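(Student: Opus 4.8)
The plan is to reduce the identity to a degree-of-freedom matching statement on the reference element and to exploit the fact that both families of functionals in $\Phi(T)$ transform correctly under $\MM$. First I would record that $\MM$ is a linear bijection of $\Sigma_k(\hat T)$ onto $\Sigma_k(T)$: since $\phi_T$ is affine, $\MM$ preserves polynomial degree; Lemma~\ref{lem::nortangcont} shows it preserves the trace-free constraint; and, applied facet-by-facet, the same lemma shows that the normal–tangential component of $\MM\hat\tau$ remains a polynomial of degree $k-1$ on each face (this is exactly the containment already used in the proof of Theorem~\ref{thm:finiteelement}). Consequently both $\MM^{-1}(I_T\sigma)$ and $I_{\hat T}(\MM^{-1}\sigma)$ lie in $\Sigma_k(\hat T)$, so by the unisolvency established in Theorem~\ref{thm:finiteelement} it suffices to prove that for every $\hat\phi \in \Phi(\hat T)$ one has $\hat\phi(\MM^{-1}(I_T\sigma)) = \hat\phi(\MM^{-1}\sigma)$, the right-hand side being $\hat\phi(I_{\hat T}(\MM^{-1}\sigma))$ by the defining property~\eqref{eq:20} of $I_{\hat T}$.

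The interior functionals come essentially for free, because the degrees of freedom in~\eqref{eq::eldofs} were designed to commute with $\MM$. The computation already carried out in the proof of Theorem~\ref{thm:finiteelement} gives, for each $\hat\eta \in \mathcal{B}_k(\hat T)$,
\[
  \int_T \tau : F_T \hat\eta F_T^{-1}\dx = \int_{\hat T}\MM^{-1}(\tau):\hat\eta\dxhat,
\]
so the physical interior functional applied to $\tau$ equals the reference interior functional $\int_{\hat T}(\cdot):\hat\eta$ (note $F_{\hat T}$ is the identity) applied to $\MM^{-1}(\tau)$. Hence for an interior $\hat\phi$, using the interpolation condition~\eqref{eq:20},
\[
  \hat\phi(\MM^{-1}(I_T\sigma)) = \int_T I_T\sigma : F_T \hat\eta F_T^{-1}\dx = \int_T \sigma : F_T \hat\eta F_T^{-1}\dx = \int_{\hat T}\MM^{-1}(\sigma):\hat\eta\dxhat = \hat\phi(\MM^{-1}\sigma).
\]

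For the facet functionals I would use Lemma~\ref{lem::nortangcont}. Fixing a reference face $\hat F$ with image $F=\phi_T(\hat F)$, I write the reference functional as $\int_{\hat F}(\MM^{-1}\tau)_{\hat n\hat t}\cdot\hat r\dshat$ with $\hat r \in \Poly^{k-1}(\hat F,\rr^{d-1})$. The identity $c\,\tangential^\trans\tau\normal = \hat{\tangential}^\trans(\MM^{-1}\tau)\hat{\normal}$ of Lemma~\ref{lem::nortangcont} transfers each scalar normal–tangential component of $\MM^{-1}\tau$ on $\hat F$ to that of $\tau$ on $F$ up to the fixed nonzero constant $c$; after the affine change of variables $\hat s \mapsto s$ (which contributes only a constant surface-Jacobian factor and maps $\Poly^{k-1}(\hat F,\rr^{d-1})$ bijectively onto $\Poly^{k-1}(F,\rr^{d-1})$) this becomes a physical facet functional $\int_F \tau_{nt}\cdot r\ds$ of the form~\eqref{eq::facedofs} for a corresponding $r$. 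Thus $\hat\phi(\MM^{-1}\tau)=\phi(\tau)$ for a matching $\phi\in\Phi^F$, and exactly as in the interior case the interpolation property $\phi(I_T\sigma)=\phi(\sigma)$ yields $\hat\phi(\MM^{-1}(I_T\sigma))=\hat\phi(\MM^{-1}\sigma)$. Combining the interior and facet cases and invoking unisolvency once more completes the argument.

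The main obstacle I anticipate is the bookkeeping in the facet step, particularly in three dimensions: there $\tau_{nt}$ is genuinely vector-valued, so one must decompose it along edge tangents, keep track of the two distinct scalings $\det(F_T^F)$ and $\det(F_T^E)$ appearing in Lemma~\ref{lem::nortangcont}, and verify that the pullback of the test space $\Poly^{k-1}(F,\rr^{d-1})$ is a bijection so that the families of facet functionals correspond term by term. The interior degrees of freedom, by contrast, commute exactly and require no estimate at all.
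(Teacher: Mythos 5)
Your proposal is correct and follows essentially the same route as the paper's proof: reduce the identity to matching all reference degrees of freedom via unisolvency, observe that the interior functionals in~\eqref{eq::eldofs} commute exactly with $\covariantpiola$ by construction, and use Lemma~\ref{lem::nortangcont} together with the affine change of variables to identify each reference facet functional of $\covariantpiola^{-1}\tau$ with a physical facet functional of $\tau$. The paper simply carries out the three-dimensional facet bookkeeping you anticipated explicitly, via dual tangential bases $\hat s_1,\hat s_2$ and the scalings $\det(F_T^F)$, $\det(F_{E_i})$.
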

\begin{proof}
  Since both the left and right hand sides are in $\Sigma_k(\hat T)$, 
  it suffices to prove that 
  \begin{align}
    \label{eq:21}
    \hat{\phi}(  \covariantpiola^{-1} (I_T\sigma)-
    I_{\hat{T}}(\covariantpiola^{-1} \sigma))= 0 \quad \text{ for all
    }
    \hat{\phi} \in \Phi(\hat{T}).
  \end{align}  
  
  To see that~\eqref{eq:21} holds for the interior degrees of freedom
  on $\hat T$ as defined in~\eqref{eq::eldofs}, noting that
  $F_{\hat T}$ is the identity, we have for all
  $\hat \eta \in \mathcal{B}_k(\hat T),$
  \begin{align*}
    \int_{\hat{T}}
    \big[ \MM^{-1} (I_T \sigma) - I_{\hat T}(\MM^{-1} \sigma) \big]
    : F_{\hat T} \hat \eta F_{\hat T}^{-1}  \dxhat 
    & =  
    \int_{\hat{T}}
    \big[ \MM^{-1} (I_T \sigma) - \MM^{-1} \sigma \big]
    : \hat \eta  \dxhat 
    \\
    & = \int_{{T}}
    (I_T \sigma -  \sigma )
      : F_T \hat{\eta} F_T^{-1}  \dx  = 0 
  \end{align*}
  due to the equality of interior degrees of freedom on~$T$
  in~\eqref{eq:20}. 

  Next, consider the facet degrees of freedom. We only consider the
  $d=3$ case (as the other case is simpler). On an arbitrary facet
  $\hat F \in \mathcal{F}_{\hat T}$, 
  choose two arbitrary edges $\hat{E_1}, \hat{E_2}$
  with unit tangential vectors $\hat{\tangential}_1$ and
  $\hat{\tangential}_2$. 
  Using a dual tangential basis $\hat{s}_1$ and $\hat{s}_2$
  such $\hat{s}_i \cdot \hat{t}_i = \delta_{ij}$, we expand
  \begin{align*}
    [\covariantpiola^{-1} (I_T\sigma- \sigma) ]_{\normal\tangential}
    = [\hat{\tangential}_1^\trans \covariantpiola^{-1} (I_T\sigma- \sigma) \hat{\normal} ] \hat{s}_1 + [\hat{\tangential}_2^\trans \covariantpiola^{-1} (I_T\sigma- \sigma) \hat{\normal} ] \hat{s}_2.
  \end{align*}
  Next we choose arbitrary $\hat{r}_1, \hat{r}_2 \in
  \Poly^{k-1}(\hat{F}, \rr)$ and define 
  \[
  \hat{r}:= \frac{\hat{r}_1}{ \mathrm{det}(F_{E_1})} \hat{\tangential}_1 +
  \frac{\hat{r}_2}{ \mathrm{det}(F_{E_2})} \hat{\tangential}_2.
  \]
  Let $r_i = \hat r_i \circ \phi_T$. Using 
  a biorthogonal basis $s_1,s_2$ with respect to 
  unit tangents $\tangential_1$ and $\tangential_2$ of mapped edges
  $E_1$ and $E_2$, we  have 
  $r :=r_1 \tangential_1 + r_2 \tangential_2.$ 
  Using Lemma \ref{lem::nortangcont} we deduce  
  \begin{align*}
   [\covariantpiola^{-1} (I_T\sigma- \sigma) ]_{{\normal}{\tangential}} =  \mathrm{det}(F_T^{F})\mathrm{det}(F_{E_1})[{\tangential}_1^\trans (I_T\sigma- \sigma) {\normal} ] \hat{s}_1 + \mathrm{det}(F_T^{F})\mathrm{det}(F_{E_2})[\tangential_2^\trans (I_T\sigma- \sigma) {\normal} ] \hat{s}_2,
  \end{align*}
  so 
  \begin{align*}
    \int_{\hat{F}} [\covariantpiola^{-1} (I_T\sigma- \sigma) ]_{{\normal}{\tangential}} \cdot \hat{r} \dxhat &= \int_F [{\tangential}_1^\trans (I_T\sigma- \sigma) {\normal} ] r_1 s_1 \cdot \tangential_1\dx + \int_F [{\tangential}_2^\trans (I_T\sigma- \sigma) {\normal} ] r_2 s_2 \cdot \tangential_2 \dx \\
                                                                                                                     &= \int_F [ ({\tangential}_1^\trans (I_T\sigma- \sigma) {\normal}) s_1  + ({\tangential}_2^\trans (I_T\sigma- \sigma) {\normal}) s_2] \cdot[ r_1 \tangential_1 + r_2 \tangential_2] \dx \\
    &= \int_F (I_T\sigma- \sigma)_{\normal\tangential} \cdot r \dx =0
  \end{align*}
 where the last equality is due to the equality of the 
 facet degrees of freedom in~\eqref{eq:20}.
\end{proof}

\begin{theorem}[Interpolation operator for $\Stressspaceh$]
 \label{thm::interpolationop}
 For any $m \ge 1$ and any 
 $\sigma \in \{ \tau \in H^m(\mesh, \RRR^{d \times d}): \;
  \jump{\tau_{nt}} =0\}$, the interpolant
  $I_{\Stressspaceh}\sigma$ is well defined and
  there is a mesh-independent constant $C$ such that 
  \begin{align}
    || \sigma - I_{\Stressspaceh}\sigma ||_{L^2(\om)} + \sqrt{\sum\limits_{F \in \facets} h || (\sigma -
    I_{\Stressspaceh}\sigma)_{\normal\tangential} ||^2_{F}}
    \le \; C h^s ||\sigma||_{H^s(\mesh)}
    \label{eq::approxsigma}
  \end{align}
    for all $s \le  \min(k,m)$.
\end{theorem}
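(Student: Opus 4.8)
The plan is to follow the classical route for finite element interpolation estimates: prove the bound on a single element $T$ by transforming to the reference element $\Tref$, applying a Bramble--Hilbert argument there, tracking the powers of $h$ generated by the transformation, and finally summing over $\mesh$. For well-definedness, note that when $m \ge 1$ each restriction $\sigma|_T \in H^m(T)$ has an $L^2(\partial T)$-trace, so the facet functionals in~\eqref{eq::facedofs} and the interior functionals in~\eqref{eq::eldofs} are continuous on $H^m(T)$; together with the unisolvency of Theorem~\ref{thm:finiteelement} this makes $I_T\sigma$, and hence $I_{\Stressspaceh}\sigma$, well-defined, the condition $\jump{\tau_{nt}}=0$ ensuring the facet data agree across neighbouring elements. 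Since both the $L^2(\om)$-norm and the facet sum in~\eqref{eq::approxsigma} decompose elementwise, it suffices to establish, for each $T\in\mesh$ and each facet $F\subset\partial T$,
\begin{align}
  \| \sigma - I_T\sigma \|_{T}
  + h^{1/2}\| (\sigma - I_T\sigma)_{\normal\tangential}\|_{F}
  \le C h^s |\sigma|_{H^s(T)},
  \label{eq:plan-local}
\end{align}
and then to sum, using that each facet is shared by at most two elements and $|\sigma|_{H^s(T)} \le \|\sigma\|_{H^s(T)}$.

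I would next pass to the reference element through the commuting relation of Lemma~\ref{lem:Imap}, which gives $I_T\sigma = \covariantpiola\big(I_{\Tref}(\covariantpiola^{-1}\sigma)\big)$; setting $\hat\sigma = \covariantpiola^{-1}\sigma$, this reads $\sigma - I_T\sigma = \covariantpiola(\hat\sigma - I_{\Tref}\hat\sigma)$. On $\Tref$ the operator $I_{\Tref}$ is a bounded linear projection onto $\Sigma_k(\Tref)$, continuous on $H^s(\Tref)$ for $s\ge 1$ (by continuity of the degrees of freedom and the trace theorem). The decisive reproduction property is that $I_{\Tref}$ fixes every trace-free matrix polynomial of degree at most $k-1$, since by~\eqref{eq:14} any such polynomial already belongs to $\Sigma_k(\Tref)$. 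Because the degrees of freedom annihilate the spherical part of a tensor, $I_T$ sees only $\dev\sigma$, so~\eqref{eq:plan-local} is understood for trace-free $\sigma$ (as is the case in the error analysis, where $\sigma$ approximates the trace-free $\visc\nabla u$). With $s\le k$, the Bramble--Hilbert lemma then yields
\begin{align}
  \| \hat\sigma - I_{\Tref}\hat\sigma \|_{\Tref}
  + \| (\hat\sigma - I_{\Tref}\hat\sigma)_{\hat n\hat t}\|_{\hat F}
  \le C |\hat\sigma|_{H^s(\Tref)},
  \label{eq:plan-ref}
\end{align}
where the facet term uses the $L^2(\hat F)$-trace inequality on the fixed domain $\Tref$.

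The final and, I expect, most delicate step is the scaling bookkeeping, because of the mixed covariant/Piola structure of $\covariantpiola$. Using the definition~\eqref{eq:M} and the estimates~\eqref{eq::jacobiscaling}, the factors $\|F_T\|\approx h$ and $\|F_T^{-1}\|\approx h^{-1}$ inside $F_T^{-\trans}(\cdot)F_T^\trans$ cancel while $|\det F_T|^{-1}\approx h^{-d}$ survives; combined with the Jacobian $|\det F_T|\approx h^{d}$ of the change of variables this gives $\|\covariantpiola\hat\eta\|_{T} \lesssim h^{-d/2}\|\hat\eta\|_{\Tref}$. Differentiating $\hat\sigma = \covariantpiola^{-1}\sigma = \det(F_T)\,F_T^\trans \sigma F_T^{-\trans}$ by the chain rule yields the companion bound $|\hat\sigma|_{H^s(\Tref)} \lesssim h^{d/2+s}|\sigma|_{H^s(T)}$. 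Chaining these with the $L^2$ part of~\eqref{eq:plan-ref} makes the $h$-powers telescope to $\|\sigma - I_T\sigma\|_T \lesssim h^s|\sigma|_{H^s(T)}$.

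For the facet contribution the essential tool is Lemma~\ref{lem::nortangcont}, which equates normal-tangential components by $c\,\tangential^\trans\tau\normal = \hat t^\trans\hat\tau\hat n$ with $c=\det(F_T^F)^2$ for $d=2$ and $c=\det(F_T^F)\det(F_T^E)$ for $d=3$; since $\det(F_T^F)\approx h^{d-1}$ and $\det(F_T^E)\approx h^{d-2}$, in both dimensions $c\approx h^{d}$. Hence $|(\sigma-I_T\sigma)_{\normal\tangential}| \approx c^{-1}|(\hat\sigma-I_{\Tref}\hat\sigma)_{\hat n\hat t}|$ pointwise, and multiplying by the surface Jacobian $\det(F_T^F)\approx h^{d-1}$ under the change of variables leads, via~\eqref{eq:plan-ref}, to $\|(\sigma-I_T\sigma)_{\normal\tangential}\|_F \lesssim h^{s-1/2}|\sigma|_{H^s(T)}$, which is exactly the $h^{1/2}$-weighted term in~\eqref{eq:plan-local}. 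The one point demanding care is that, for $d=3$, the vector $(\cdot)_{\normal\tangential}$ must be reconstructed from its components along two edge tangents using a dual (biorthogonal) tangential basis, as in the proof of Lemma~\ref{lem:Imap}; one must check that these dual vectors contribute only mesh-independent $O(1)$ factors so the exponents above are not disturbed. Once the scalings are verified, summation over $\mesh$ together with quasiuniformity completes the proof.
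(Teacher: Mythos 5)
Your proposal is correct and follows essentially the same route as the paper's proof: commute $I_T$ with $\covariantpiola$ via Lemma~\ref{lem:Imap}, use the reproduction of (trace-free) polynomials of degree $k-1$ guaranteed by unisolvency, and conclude with the Bramble--Hilbert lemma and scaling; you simply spell out the scaling bookkeeping (via \eqref{eq::jacobiscaling} and Lemma~\ref{lem::nortangcont}) that the paper compresses into ``a standard argument.'' Your explicit restriction of the reproduction property to trace-free polynomials is in fact the careful reading of the paper's claim, since the degrees of freedom annihilate the spherical part.
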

\begin{proof}
  Let $\hat \sigma = \MM^{-1}( \sigma|_{T})$.
  By Lemma~\ref{lem:Imap},
  $\MM^{-1} (\sigma - I_T \sigma) = \hat \sigma - I_{\hat T}
  \hat{\sigma}$.
  By the unisolvency of the reference element degrees of freedom
  (Theorem~\ref{thm:finiteelement}),
  \[
  \hat\sigma - I_{\hat T} \hat\sigma = 0 
  \quad \text{ for all } \hat\sigma \in \Poly^{k-1}(\hat T,
  \RRR^{d \times d}).  
  \]
  Now a standard argument using the Bramble-Hilbert lemma, the
  continuity of
  $I_{\hat T}: H^s(\hat T, \RRR^{d \times d}) \rightarrow L^2(\hat T,
  \RRR^{d \times d})$, and scaling arguments, finish the proof.
\end{proof}

\section{A priori error analysis} \label{sec::apriorianalysis}

In this section we show discrete inf-sup stability of the MCS method,
optimal error estimates (Theorem~\ref{th::optimalconvergence}) and
pressure robustness (Theorem~\ref{th::pressurerobust}).  The error
analysis is in the following norms.
\begin{align*}
  \Sigmanormh{ \tau_h }^2 &:= || \tau_h ||_{L^2(\om)}^2 = || \Dev{\tau_h} ||_{L^2(\om)}^2, 
  && \tau_h \in \Sigma_h,
  \\
  \Velnormh{ v_h }^2  &:= \honenormh{v_h}^2 := 
                     \sum\limits_{T \in \mesh} ||  \nabla v_h ||_{T}^2 
                      + \sum\limits_{F \in \facets} 
                      \frac{1}{h} \| \jump{{ (v_h)_\tangential}} \|^2_{F},
  && v_h \in V_h, 
  \\
  \Presnormh{ q_h}^2 &:= || q_h ||_{L^2(\om)}^2, 
                     && q_h \in Q_h.
\end{align*}
Comparing with (appropriate) norms of the infinite dimensional spaces $\Velspace$
and $\Stressspace$, these  norms might seem unnatural.
But we choose these norms
in order to obtain velocity error estimates in an $H^1$-like norm 
comparable to the standard velocity-pressure formulation. Since our
discrete spaces do not admit $H^1$-conformity,
our $\Velnormh{ \cdot }$-norm contains a term that penalizes 
the tangential discontinuities (as in the analysis of discontinuous Galerkin methods).
The $L^2$-like norm on the $\Sigma_h$ is also related to an $H^1$-like norm of the
velocity since we expect $\sigma_h$ to be an approximation 
of $\nu \grad u$.

\subsection{Norm equivalences}

We use $A \sim B $ to indicate that there are 
constants $c, C>0$ {\em independent of the mesh size $h$ and the viscosity $\visc$} 
such that $c A \le B \le C A$.  We also use
$A \lesssim B$ when there is a $C>0$ 
independent of $h$ and $\visc$ such that
$A \le C B$ (and $\gtrsim$ is defined similarly).  Due to quasiuniformity, 
the following estimates follow by standard scaling
arguments: for any $\hat \tau \in \Sigma_k(\hat T)$, letting $\tau =
\mathcal M ( \hat \tau)$, 
\begin{gather}
  \label{eq:scale-s_L2norm}
  h^{d}\| \tau_h \|_{T}^2 \sim \| \hat \tau_h \|_{0, \hat T}^2.
\end{gather}
On any $F \in \facetT$, Lemma~\ref{lem::nortangcont}, together with
a scaling argument yields
\begin{gather}
  \label{eq:scale-s_nt}
  h^{d+1} \left\| t^\trans\tau_h n \right\|_{F}^2 \sim
  \left\| \hat{t}^\trans \hat\tau_h {\hat n} \right\|_{0, \hat F}^2.
\end{gather}

\begin{lemma} \label{lem::normeqsigma}
  For all $\tau_h \in \Stressspaceh$, 
  \begin{align*}
    \Sigmanormh{ \tau_h }^2 \sim \sum\limits_{T \in \mesh} || \Dev{\tau_h}
    ||_{T}^2 + \sum\limits_{F \in \facets} h 
    \big\| \jump{(\tau_h)_{\normal\tangential}} \big\|_{F}^2.
  \end{align*}
\end{lemma}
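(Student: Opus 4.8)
The plan is to reduce the equivalence to a single local, scaled trace inequality and then assemble it over the mesh. The first, and cheapest, observation is that every $\tau_h \in \Stressspaceh$ is trace-free, so $\Dev{\tau_h} = \tau_h$ pointwise and hence $\sum_{T \in \mesh} \| \Dev{\tau_h} \|_T^2 = \| \tau_h \|_{L^2(\om)}^2 = \Sigmanormh{\tau_h}^2$; the volume part of the right-hand side already reproduces the left-hand side exactly. Since the facet sum is nonnegative, the lower bound $\Sigmanormh{\tau_h}^2 \le \sum_{T}\|\Dev{\tau_h}\|_T^2 + \sum_{F} h\|\jump{(\tau_h)_{\normal\tangential}}\|_F^2$ is then immediate, and only the upper bound, i.e.\ the estimate $\sum_{F \in \facets} h\,\|\jump{(\tau_h)_{\normal\tangential}}\|_F^2 \lesssim \|\tau_h\|_{L^2(\om)}^2$, remains. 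Here it is worth noting that the interior-facet contributions actually vanish by the normal-tangential continuity built into $\Stressspaceh$ in~\eqref{eq:13}, so the facet sum really reduces to the boundary terms $\sum_{F \in \facetsext} h\,\|(\tau_h)_{\normal\tangential}\|_F^2$; nevertheless I would prove the bound uniformly for all facets, so that the argument applies verbatim should the facet term be genuinely present elsewhere in the analysis.

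The heart of the matter is the scaled trace inequality $h\,\|\tangential^\trans \tau_h \normal\|_F^2 \lesssim \|\tau_h\|_T^2$ for each $T \in \mesh$ and each facet $F \subset \partial T$. I would obtain it by transporting to the reference element via $\tau_h = \MM(\hat\tau_h)$. On the fixed element $\Tref$ the finite-dimensional space $\Sigma_k(\Tref)$ admits the trace inequality $\|\hat{\tangential}^\trans \hat\tau_h \hat{\normal}\|_{0,\hat F}^2 \lesssim \|\hat\tau_h\|_{0,\Tref}^2$ with a constant depending only on $\Tref$ and $k$. Chaining this with the two scaling relations~\eqref{eq:scale-s_nt} and~\eqref{eq:scale-s_L2norm} gives
\[
h^{d+1}\,\|\tangential^\trans \tau_h \normal\|_F^2 \sim \|\hat{\tangential}^\trans \hat\tau_h \hat{\normal}\|_{0,\hat F}^2 \lesssim \|\hat\tau_h\|_{0,\Tref}^2 \sim h^{d}\,\|\tau_h\|_T^2,
\]
and cancelling $h^{d}$ leaves exactly one surviving factor of $h$, which is the asserted estimate. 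Summing over a tangent-plane basis converts this into $h\,\|(\tau_h)_{\normal\tangential}\|_F^2 \lesssim \|\tau_h\|_T^2$.

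With the local inequality in hand I would finish by assembling facet by facet. For an interior facet $F = T_1 \cap T_2$ the triangle inequality yields $\|\jump{(\tau_h)_{\normal\tangential}}\|_F^2 \lesssim \|(\tau_h|_{T_1})_{\normal\tangential}\|_F^2 + \|(\tau_h|_{T_2})_{\normal\tangential}\|_F^2$, while a boundary facet involves only one element; applying the scaled trace inequality to each term and summing, using shape regularity to bound the number of facets abutting any element, gives $\sum_{F \in \facets} h\,\|\jump{(\tau_h)_{\normal\tangential}}\|_F^2 \lesssim \sum_{T \in \mesh}\|\tau_h\|_T^2 = \|\tau_h\|_{L^2(\om)}^2$. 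Together with the volume identity this establishes the upper bound and completes the equivalence. I expect the only genuine work to be the scaled trace inequality --- specifically, keeping the powers of $h$ (equivalently, of $\det F_T$, $\det F_T^F$ and $\det F_T^E$) straight through the map $\MM$ so that precisely one factor of $h$ survives; the remaining steps are routine triangle-inequality and shape-regularity bookkeeping.
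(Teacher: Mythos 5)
Your argument is correct and follows essentially the same route as the paper: a finite-dimensional trace inequality on the reference element combined with the scaling relations~\eqref{eq:scale-s_L2norm} and~\eqref{eq:scale-s_nt}, then summation over facets using shape regularity, with the reverse inequality being trivial since $\trace{\tau_h}=0$ gives $\Dev{\tau_h}=\tau_h$. Your side remark that the interior-facet jumps actually vanish by the normal-tangential continuity built into $\Stressspaceh$ is accurate and goes slightly beyond what the paper states, but does not change the substance of the proof.
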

\begin{proof}
  By finite dimensionality, for any face $\hat F \in \mathcal{F}_{\hat T}$,
  \[
    h \| \hat{t}^\trans\hat\tau_h {\hat n} \|_{0, \hat F}^2 \lesssim \| \hat
    \tau_h \|_{0, \hat T}^2, 
    \quad \text{ for all } \hat \tau_h \in \Sigma_k(\hat T).
  \]
  Due to   \eqref{eq:scale-s_nt} and~\eqref{eq:scale-s_L2norm}, this yields
  \[
  \sum_{F \in \facets} 
  h \big\| \jump{(\tau_h)_{n t}} \big\|_{F}^2 \lesssim 
  \sum_{T \in \mesh} \| \tau_h \|_{T}^2, 
  \quad \text{ for all } \tau_h \in \Sigma_k( T).
  \]
  This proves one side of the stated
  equivalence. The other side is obvious.
\end{proof}

On each facet $F \in \facets$ with normal vector $n_F$, let
$\proj_F^0$ denote the $L^2$ projection onto the space of constant
tangential vectors in $n_F^\perp$, i.e., for any vector function
$v \in L^2(F, n_F^\perp)$, the projection $\proj_F^0 v \in n_F^\perp$
satisfies $(\proj_F^0 v, t)_F = (v, t)_F$ for all $t \in n_F^\perp.$

\begin{lemma} \label{lem::normeqvel} 
  For all $v_h \in \Velspaceh$,
  \begin{align*}
     \Velnormh{ v_h }^2 \sim \sum\limits_{T \in \mesh}
    ||  \nabla v_h    ||_{T}^2 + \sum\limits_{F \in \facets} \frac{1}{h} 
    \big\| \proj_F^0\jump{{ (v_h)_\tangential}} \big\|^2_{F}
  \end{align*}
\end{lemma}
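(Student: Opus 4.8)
The plan is to prove the two bounds implicit in ``$\sim$'' separately, with essentially all of the work concentrated in one of them. The inequality in which the right-hand side is bounded by $\Velnormh{v_h}^2$ is immediate: on every facet $F$ the operator $\proj_F^0$ is the $L^2(F)$-orthogonal projection onto constant tangential fields, hence a contraction, so $\| \proj_F^0 \jump{(v_h)_\tangential} \|_F \le \| \jump{(v_h)_\tangential} \|_F$. Multiplying by $1/h$, summing over $F \in \facets$, and adding the common volume terms $\sum_{T}\|\nabla v_h\|_T^2$ shows that the right-hand side is at most $\honenormh{v_h}^2 = \Velnormh{v_h}^2$.

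For the reverse inequality $\Velnormh{v_h}^2 \lesssim (\text{RHS})$ I would use the orthogonal (Pythagorean) splitting on each facet,
\[
\| \jump{(v_h)_\tangential} \|_F^2
= \| \proj_F^0 \jump{(v_h)_\tangential} \|_F^2
+ \| (I - \proj_F^0) \jump{(v_h)_\tangential} \|_F^2 ,
\]
which reduces the task to controlling the oscillatory remainder $(I - \proj_F^0)\jump{(v_h)_\tangential}$; by the definition of $\proj_F^0$ this remainder has zero mean on $F$. The heart of the argument is then the per-facet estimate
\[
\tfrac1h \, \| (I-\proj_F^0)\jump{(v_h)_\tangential} \|_F^2
\;\lesssim\;
\sum_{\substack{T \in \mesh \\ F \subset \partial T}} \| \nabla v_h \|_T^2 ,
\]
which I would obtain by chaining three local estimates whose $h$-powers cancel exactly. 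First, a Poincar\'e--Wirtinger inequality on $F$ (of diameter $\approx h$, with a uniform constant by shape regularity) applied to the zero-mean field gives $\|(I-\proj_F^0)\jump{(v_h)_\tangential}\|_F \lesssim h\,\|\nabla_F \jump{(v_h)_\tangential}\|_F$, where $\nabla_F$ is the surface gradient. Second, since the normal is constant on a straight-sided facet, the surface gradient of the tangential trace $(v_h)_\tangential = v_h - (v_h\cdot n)\,n$ is a bounded combination of components of $\nabla v_h|_F$, so $\|\nabla_F \jump{(v_h)_\tangential}\|_F \lesssim \sum_{F \subset \partial T} \| \nabla v_h|_F \|_F$. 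Third, a scaled trace inequality combined with an inverse estimate for the piecewise polynomial $\nabla v_h$ yields $\| \nabla v_h|_F \|_F \lesssim h^{-1/2} \| \nabla v_h \|_T$. Together these produce $\|(I-\proj_F^0)\jump{(v_h)_\tangential}\|_F \lesssim h^{1/2} \sum_{F\subset\partial T}\|\nabla v_h\|_T$, which is precisely the displayed per-facet bound after dividing by $h$. Boundary facets $F \in \facetsext$ are handled identically with a single adjacent element. Summing over all facets and invoking finite overlap (each element has only $d+1$ facets) gives $\sum_{F}\tfrac1h\|(I-\proj_F^0)\jump{(v_h)_\tangential}\|_F^2 \lesssim \sum_T \|\nabla v_h\|_T^2$; substituting this into the Pythagorean split and combining with the retained projected term closes the reverse inequality.

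The only genuine obstacle is this oscillation estimate: one must verify that the non-constant part of the tangential jump is a \emph{higher-order} quantity, so that the factor $h$ gained from Poincar\'e defeats the $h^{-1/2}$ lost in the trace/inverse step and the $1/h$ facet weight, leaving a constant independent of both $h$ and $\visc$. The role of $\proj_F^0$ is exactly to strip off the constant mean so that the Poincar\'e inequality applies; the $H(\div)$-conformity of $v_h$ enters only to ensure that the normal jump vanishes, so that merely the tangential jump appears in $\honenormh{\cdot}$, and plays no further part in the estimate.
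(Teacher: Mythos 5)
Your proposal is correct and follows essentially the same route as the paper: one direction from the contraction property of $\proj_F^0$, the other by splitting the tangential jump into its projected part and a zero-mean oscillation and bounding the latter by $h^{1/2}\|\nabla v_h\|_T$ per adjacent element. The only difference is cosmetic -- you use an exact Pythagorean split where the paper uses a factor-of-two triangle inequality, and you spell out (via Poincar\'e--Wirtinger on the facet plus a scaled trace estimate) the oscillation bound that the paper simply cites as standard.
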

\begin{proof}
  One side of the equivalence is obvious from the continuity of
  $\proj_F^0$.  For the other direction,
  \begin{align} 
    \Velnormh{ v_h }^2
    & \le \sum\limits_{T \in \mesh}
      ||  \nabla v_h ||_{T}^2
      + \sum\limits_{F \in \facets} 
      \frac{2}{h}
      \big\| \proj_F^0\jump{{ (v_h)_\tangential}} \big\|^2_{F}
      + 
      \frac{2}{h}
      \big\| \jump{{ (v_h)_\tangential}} - \proj_F^0\jump{{ (v_h)_\tangential}} \big\|^2_{F}.
      \label{eq::insertprojection}
  \end{align}
  Now, on each facet $F \in \facetT,$ we use the standard estimate
  $
  \big\| (v_h)_\tangential - \proj_F^0 (v_h)_\tangential
  \big\|_{F}
  \lesssim h^{1/2} \| \nabla v_h \|_{T}
  $ to complete the proof.
\end{proof}

\subsection{Stability analysis}

\begin{lemma}[Continuity of $\ablf$, $\blfone$ and $\blftwo$]
  \label{th:continuity}
  The bilinear forms $\ablf$, $\blfone$ and $\blftwo$ are continuous:
  \begin{align*}
    \begin{aligned}
    \ablf (\Stressvarh ,\Stressvarhtest) &\cle \frac{1}{\sqrt{\visc}}  \Sigmanormh{ \Stressvarh }  \frac{1}{\sqrt{\visc}}\Sigmanormh{ \Stressvarhtest } && \quad\text{ for all } \Stressvarh ,\Stressvarhtest \in \Stressspaceh \\
    \blfone(\Velvarhtest, \Presvarh) &\cle \Velnormh{ \Velvarhtest } || \Presvarh ||_{Q_h} && \quad\text{ for all } \Velvarhtest \in \Velspaceh, \Presvarh \in \Presspaceh\\ 
    \blftwo(\Stressvarh, \Velvarhtest) &\cle \Sigmanormh{ \Stressvarh } \Velnormh{ \Velvarhtest } && \quad\text{ for all } \Stressvarh \in \Stressspaceh, \Velvarhtest \in \Velspaceh.
  \end{aligned}
  \end{align*}
\end{lemma}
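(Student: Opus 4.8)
The plan is to establish the three estimates separately, in increasing order of difficulty; the bounds for $\ablf$ and $\blfone$ are immediate consequences of the Cauchy--Schwarz inequality, while the continuity of $\blftwo$ carries all the real content. For $\ablf$, I would apply Cauchy--Schwarz in $L^2(\om, \RRR^{d\times d})$ to $(\visc^{-1}\Dev{\Stressvarh}, \Dev{\Stressvarhtest})$, split the factor $\visc^{-1}$ as $\visc^{-1/2}\cdot\visc^{-1/2}$, and invoke the defining identity $\Sigmanormh{\tau_h} = \|\Dev{\tau_h}\|_{L^2(\om)}$; this yields the stated bound directly. For $\blfone$, Cauchy--Schwarz applied to $(\divergence(\Velvarhtest), \Presvarh)$ reduces the task to proving $\|\divergence(\Velvarhtest)\|_{L^2(\om)} \cle \Velnormh{\Velvarhtest}$. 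Since $\divergence(\Velvarhtest) = \trace{\nabla\Velvarhtest}$ elementwise and $|\trace{M}| \le \sqrt{d}\,|M|$ for any matrix $M$, one has $\|\divergence(\Velvarhtest)\|_T \le \sqrt{d}\,\|\nabla\Velvarhtest\|_T$; summing over $T \in \mesh$ and comparing with the volume term of $\Velnormh{\cdot}^2$ completes this case.

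The main work lies in the continuity of $\blftwo$. Here I would start from the integration-by-parts representation~\eqref{eq::blftwoequitwo}, which is applicable because every $\Stressvarh \in \Stressspaceh$ satisfies $\jump{(\Stressvarh)_{\normal\tangential}} = 0$ and every $\Velvarhtest \in \Velspaceh \subset H_0(\div, \om)$ satisfies $\jump{(\Velvarhtest)_\normal} = 0$. Thus $\blftwo(\Stressvarh, \Velvarhtest) = -\sum_{T \in \mesh}(\Stressvarh, \nabla\Velvarhtest)_T + \sum_{F \in \facets}((\Stressvarh)_{\normal\tangential}, \jump{(\Velvarhtest)_\tangential})_F$. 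The volume sum is treated by the discrete Cauchy--Schwarz inequality: it is bounded by $\|\Stressvarh\|_{L^2(\om)}(\sum_{T \in \mesh}\|\nabla\Velvarhtest\|_T^2)^{1/2}$, and since $\|\Stressvarh\|_{L^2(\om)} = \Sigmanormh{\Stressvarh}$ and the second factor is dominated by $\Velnormh{\Velvarhtest}$, this piece is at most $\Sigmanormh{\Stressvarh}\Velnormh{\Velvarhtest}$. For the facet sum I would insert compensating powers of $h$, writing each facet contribution as $(h^{1/2}\|(\Stressvarh)_{\normal\tangential}\|_F)(h^{-1/2}\|\jump{(\Velvarhtest)_\tangential}\|_F)$, and then apply Cauchy--Schwarz over facets. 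The resulting factor $\sum_{F \in \facets}\tfrac1h\|\jump{(\Velvarhtest)_\tangential}\|_F^2$ is exactly the jump term in $\Velnormh{\Velvarhtest}^2$, hence bounded by $\Velnormh{\Velvarhtest}^2$. The remaining factor is controlled by the scaled trace inequality $\sum_{F \in \facets} h\|(\Stressvarh)_{\normal\tangential}\|_F^2 \cle \sum_{T \in \mesh}\|\Stressvarh\|_T^2 = \Sigmanormh{\Stressvarh}^2$, which is precisely the inequality derived (from finite-dimensionality on $\Tref$ together with the scalings~\eqref{eq:scale-s_nt} and~\eqref{eq:scale-s_L2norm}) in the proof of Lemma~\ref{lem::normeqsigma}.

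The only non-routine ingredient --- and therefore the step I expect to be the main obstacle --- is this last facet-trace bound for the stress, i.e.\ absorbing $\sum_{F \in \facets} h\|(\Stressvarh)_{\normal\tangential}\|_F^2$ into $\Sigmanormh{\Stressvarh}^2$. This is where shape regularity, quasiuniformity, and the deliberately chosen $h$-weights in the two discrete norms come into play, and I would simply quote the corresponding estimate already available from the proof of Lemma~\ref{lem::normeqsigma}. With that inequality in hand, both terms of $\blftwo$ combine to give $\blftwo(\Stressvarh, \Velvarhtest) \cle \Sigmanormh{\Stressvarh}\Velnormh{\Velvarhtest}$, and all three continuity estimates are established.
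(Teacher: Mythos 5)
Your proposal is correct and follows essentially the same route as the paper: Cauchy--Schwarz for $\ablf$ and $\blfone$, and for $\blftwo$ the representation~\eqref{eq::blftwoequitwo} with an $h^{1/2}$/$h^{-1/2}$ split on the facet term, absorbing $\sum_{F} h \|(\Stressvarh)_{\normal\tangential}\|_F^2$ via the scaling/finite-dimensionality estimate underlying Lemma~\ref{lem::normeqsigma}. The only cosmetic difference is that the paper additionally notes $(\Stressvarh)_{\normal\tangential}=(\Dev{\Stressvarh})_{\normal\tangential}$, which is immaterial here since elements of $\Stressspaceh$ are trace-free.
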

\begin{proof}
  The continuity for the bilinear forms $\ablf$ and $\blfone$ follows
  from the Cauchy-Schwarz inequality, we only consider $b_2$, which 
  by~\eqref{eq::blftwoequitwo} can be written as
  \begin{align*}
\blftwo(\Stressvarh, \Velvarhtest) =  -\sum\limits_{T \in \mesh} \int_T \Stressvarh : \nabla \Velvarhtest \dx + \sum\limits_{F \in \facets} \int_F (\Stressvarh)_{nt} \cdot \jump{(\Velvarhtest)_t} \ds .
  \end{align*}
  Since $(\Stressvarh)_{\normal\tangential} =
  (\Dev{\Stressvarh})_{\normal\tangential}$, 
  we conclude the proof by  Cauchy-Schwarz inequality 
  and Lemma~\ref{lem::normeqsigma}.
\end{proof}

\begin{lemma}[Coercivity of $\ablf$ on the kernel] \label{th::coercivity}
Let
$\kernel := \{ (\Stressvarhtest,\Presvarhtest) \in  \Stressspaceh
\times \Presspaceh: \blfone(\Velvarhtest, \Presvarhtest) +
\blftwo(\Stressvarh,\Velvarhtest)= 0$ for all $ \Velvarhtest \in \Velspaceh \}.
$ 
For all $(\Stressvarh,\Presvarh) \in \kernel$, 
  \begin{align*}
    \frac{1}{\visc} 
    \big(\,\Sigmanormh{ \Stressvarh } + \Presnormh{ \Presvarh }
    \big)^2  \lesssim \;
    \ablf(\Stressvarh,\Stressvarh).
  \end{align*}
\end{lemma}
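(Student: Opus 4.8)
The plan is to reduce the stated coercivity to the single inequality $\Presnormh{\Presvarh} \cle \Sigmanormh{\Stressvarh}$, valid for all $(\Stressvarh, \Presvarh) \in \kernel$. First I would record that every $\Stressvarh \in \Stressspaceh$ is trace-free, so $\Dev{\Stressvarh} = \Stressvarh$ and therefore $\ablf(\Stressvarh, \Stressvarh) = \visc^{-1}\|\Dev{\Stressvarh}\|_{L^2(\om)}^2 = \visc^{-1}\Sigmanormh{\Stressvarh}^2$. Expanding the square $(\Sigmanormh{\Stressvarh} + \Presnormh{\Presvarh})^2$ and using $2ab \le a^2 + b^2$, the claimed bound $\visc^{-1}(\Sigmanormh{\Stressvarh} + \Presnormh{\Presvarh})^2 \cle \ablf(\Stressvarh, \Stressvarh)$ reduces, after cancelling the common factor $\visc^{-1}$, to controlling $\Presnormh{\Presvarh}$ by $\Sigmanormh{\Stressvarh}$ with a constant independent of $h$ and $\visc$.

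Next I would extract the pressure bound from the kernel condition. For any $\Velvarhtest \in \Velspaceh$, membership $(\Stressvarh, \Presvarh) \in \kernel$ means $\blfone(\Velvarhtest, \Presvarh) = -\blftwo(\Stressvarh, \Velvarhtest)$, that is $(\divergence(\Velvarhtest), \Presvarh) = -\blftwo(\Stressvarh, \Velvarhtest)$. The continuity of $\blftwo$ (Lemma~\ref{th:continuity}) then gives $|(\divergence(\Velvarhtest), \Presvarh)| \cle \Sigmanormh{\Stressvarh}\, \Velnormh{\Velvarhtest}$, so that $\sup_{\Velvarhtest \in \Velspaceh} (\divergence(\Velvarhtest), \Presvarh)/\Velnormh{\Velvarhtest} \cle \Sigmanormh{\Stressvarh}$.

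The decisive ingredient, and where I expect the real work to lie, is a discrete inf-sup (LBB) estimate for the velocity--pressure pair in the present broken norm,
\[
\Presnormh{\Presvarh} \cle \sup_{\Velvarhtest \in \Velspaceh} \frac{(\divergence(\Velvarhtest), \Presvarh)}{\Velnormh{\Velvarhtest}} \quad \text{for all } \Presvarh \in \Presspaceh.
\]
Combined with the previous paragraph this yields $\Presnormh{\Presvarh} \cle \Sigmanormh{\Stressvarh}$ and closes the argument. To prove the inf-sup I would argue by a Fortin construction: given $\Presvarh \in \Presspaceh$, the continuous inf-sup (Bogovskii) furnishes $w \in H_0^1(\om, \rr^d)$ with $\divergence(w) = \Presvarh$ and $\|w\|_{H^1(\om)} \cle \Presnormh{\Presvarh}$; mapping $w$ into $\Velspaceh$ by the canonical $BDM$ interpolant $I_{BDM}$ and using its commuting property, namely that $\divergence(I_{BDM} w)$ equals the $L^2(\om)$-projection of $\divergence(w)$ onto $\Presspaceh$, which is $\Presvarh$ itself since $\divergence(\Velspaceh) = \Presspaceh$ (cf.~\eqref{eq::exactdivfree}). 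Testing the supremum with $I_{BDM} w$ then gives the lower bound $\Presnormh{\Presvarh}^2 / \Velnormh{I_{BDM} w}$. The genuine obstacle is the stability $\Velnormh{I_{BDM} w} \cle \|w\|_{H^1(\om)}$ in the $H^1$-like norm rather than merely in $H(\divergence, \om)$; here the tangential-jump penalty is harmless because the tangential trace of $w \in H_0^1$ is single-valued, so $\jump{(I_{BDM} w)_\tangential}$ is controlled by elementwise interpolation estimates, while standard scaling bounds the broken gradient. With this stability the inf-sup constant is mesh- and viscosity-independent, completing the proof.
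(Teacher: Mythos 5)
Your proposal is correct and follows essentially the same route as the paper: both reduce the claim to the single bound $\Presnormh{\Presvarh}\cle\Sigmanormh{\Stressvarh}$, obtained by combining the kernel relation $\blfone(\Velvarhtest,\Presvarh)=-\blftwo(\Stressvarh,\Velvarhtest)$, the continuity of $\blftwo$ in the discrete norms, and the existence of a $\Velvarhtest\in\Velspaceh$ with $\divergence(\Velvarhtest)=\Presvarh$ and $\Velnormh{\Velvarhtest}\cle\Presnormh{\Presvarh}$. The only difference is that the paper cites this last discrete inf-sup fact as well known (referring to Brezzi--Fortin), whereas you sketch its proof via the Bogovskii/BDM-Fortin construction, which is a correct and slightly more self-contained rendering of the same step.
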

\begin{proof}
  Let $(\Stressvarh,\Presvarh) \in \kernel$ be arbitrary. 
  As $\visc^{-1}\Sigmanormh{ \Stressvarh }^2 =
  \ablf(\Stressvarh,\Stressvarh)$ it is sufficient to bound only the
  norm of $\Presvarh$. 
  It is well known -- see e.g., \cite{brezzi2012mixed} -- that for any 
  $\Presvarh \in \Presspaceh$
  \begin{equation}
    \label{eq:17}
    \exists   
    \Velvarhtest \in \Velspaceh: 
    \qquad \divergence(\Velvarhtest) =p_h, \quad
    \Velnormh{ \Velvarhtest } \lesssim ||    \Presvarh ||_{Q_h}.
  \end{equation}
  With this $v_h$, 
  \begin{align*}
    2 \Presnormh{ \Presvarh }^2 
    & = \sum\limits_{T \in \mesh} \int_T \Presvarh \Presvarh \dx =
      \sum\limits_{T \in \mesh} \int_T \divergence(\Velvarhtest)
      \Presvarh \dx 
      = \blfone(\Velvarhtest,\Presvarh)
      \\
    & = - \blftwo(\Stressvarh,\Velvarhtest) 
    && \text{as $(\Stressvarh,\Presvarh) \in \kernel$,}
    \\
    & 
      =  
      \sum\limits_{T \in \mesh} \int_T \Stressvarh : \nabla
      \Velvarhtest \dx 
      - \sum\limits_{F \in \facets} 
      \int_F (\Stressvarh)_{nt} \cdot \jump{(\Velvarhtest)_t} \ds
      && \text{by \eqref{eq::blftwoequitwo},}
      \\
    & \le 
      || \Dev{\Stressvarh} ||_{L^2(\om)} \Velnormh{ \Velvarhtest } 
    && \text{using Lemma~\ref{lem::normeqsigma},}
    \\
    & \cle \Sigmanormh{ \Stressvarh } || \Presvarh ||_{Q_h}
    && \text{by~\eqref{eq:17}}.
  \end{align*}
\end{proof}

Next, we proceed to verify the discrete LBB condition (in
Theorem~\ref{th::lbbcondition} below).  
Define 
\begin{gather*}
\Velspacehdivfree := \{ w_h \in \Velspaceh: \divergence(w_h) = 0\}, 
\\  
\| v_h \|_{1, \text{dev}, h} :=
  \left(
    \sum_{K \in \mesh} \| \dev{\nabla v_h}  \|_{T}^2  + 
    \sum_{F \in \facets} 
    \frac 1 h \left\| \jump{ (v_h)_t } \right\|_{F}^2
   \right)^{1/2}.
\end{gather*}
Since
$\|\nabla v_h \|_{T}^2 \sim \| \dev{\nabla v_h} \|_{T}^2 + \| \div(v_h)
\|_{T}^2$
on any $T \in \mesh$ and for any $v_h \in V_h$, 
we have 
\begin{equation}
  \label{eq:equivVh}
  \devnorm{v_h} \sim \| v_h \|_{V_h} \quad \text{ for all } v_h
\in V_h^0.
\end{equation}
A first step towards proving the LBB condition is the construction of
a  specific stress function $\tau_h$ which only depends on
$\Dev{\nabla v_h}$ for any  $v_h \in V_h^0$. 
Using this $\tau_h$ we prove an LBB condition for
$\blftwo$ on $\Velspacehdivfree$, which is the content of the next
lemma.  As $\tau_h \in \Stressspaceh$ has a zero trace, we cannot in
general control the divergence of a general $v_h \in V_h$ solely using
such a $\tau_h$. Therefore, to complete the proof of the full inf-sup
condition (in the proof of Theorem~\ref{th::lbbcondition} below), we utilize
an appropriate pressure test function as well.

\begin{lemma}
  \label{lem:intermediateLBB}
  For any nonzero $v_h \in V_h$ there exists a nonzero
  $\tau_h \in \Sigma_h$ satisfying 
  $
  b_2(\tau_h, v_h) \gtrsim \devnorm{v_h}^2$ and $ \| \tau_h \|_{\Sigma_h} 
  \lesssim \devnorm{v_h},
  $
  so  by~\eqref{eq:equivVh}, 
  \[
  \| v_h \|_{V_h} \lesssim   \sup_{\tau_h \in \Sigma_h} 
  \frac{b_2(\tau_h, v_h)}{ \| \tau_h \|_{\Sigma_h}} 
  \quad 
  \text{ for all } v_h \in V_h^0.
  \]
\end{lemma}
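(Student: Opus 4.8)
The plan is to construct $\tau_h$ as a sum $\tau_h=\tau_h^B+\gamma\,\tau_h^F$ of an interior bubble part $\tau_h^B$ reproducing the elementwise gradients and a facet part $\tau_h^F$ reproducing the tangential jumps, with a small constant $\gamma>0$ fixed at the end. Write $g:=\dev{\nabla v_h}$ (so $g|_T\in\Poly^{k-1}(T,\dd)$ and, since $\tau_h$ is trace free, $\tau_h:\nabla v_h=\tau_h:g$), and set $D:=\sum_{T\in\mesh}\|g\|_T^2$ and $J:=\sum_{F\in\facets}h^{-1}\|\proj_F^{k-1}\jump{(v_h)_t}\|_F^2$, where $\proj_F^{k-1}$ is the $L^2(F)$-projection onto $\Poly^{k-1}(F,\rr^{d-1})$. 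By~\eqref{eq::blftwoequitwo}, $\blftwo(\tau_h,v_h)=-\sum_T\int_T\tau_h:g\,\dx+\sum_F\int_F(\tau_h)_{nt}\cdot\jump{(v_h)_t}\ds$, and since $(\tau_h)_{nt}\in\Poly^{k-1}(F,\rr^{d-1})$ the facet term only senses $\proj_F^{k-1}\jump{(v_h)_t}$; thus it suffices to produce $\tau_h$ with $\blftwo(\tau_h,v_h)\gtrsim D+J$ and $\Sigmanormh{\tau_h}\lesssim\sqrt{D+J}$. For the facet part I would define $\tau_h^F\in\Stressspaceh$ by prescribing $(\tau_h^F)_{nt}|_F:=h^{-1}\proj_F^{k-1}\jump{(v_h)_t}$ (single valued, hence admissible) and zero interior degrees of freedom; unisolvency (Theorem~\ref{thm:finiteelement}), finite dimensionality on $\Tref$ and the scalings~\eqref{eq:scale-s_L2norm}--\eqref{eq:scale-s_nt} give $\Sigmanormh{\tau_h^F}^2=\sum_T\|\tau_h^F\|_T^2\lesssim J$, while its facet contribution to $\blftwo$ is exactly $J$.

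The crux is $\tau_h^B$, for which I would first establish a reference-element inf-sup. The $L^2$ pairing $(\hat\tau,\hat g)\mapsto\int_{\Tref}\hat\tau:\hat g$ on $\mathcal B_k(\Tref)\times\Poly^{k-1}(\Tref,\dd)$ is nondegenerate: writing a bubble as in Lemma~\ref{lem::structurebubblespace} (in two dimensions $\hat\tau=\sum_i\lambda_i\mu_i S^i$) and $\hat g=\sum_j g_j S^j$, vanishing of the pairing for all admissible $\mu_i$ forces, for each $i$, $\lambda_i h_i\perp\Poly^{k-1}(\Tref)$ with $h_i:=\sum_j (S^i:S^j)\,g_j$; testing with $\mu_i=h_i$ and using $\lambda_i>0$ in the interior gives $h_i=0$, and invertibility of the Gram matrix $(S^i:S^j)$ yields $\hat g=0$ (the three-dimensional case is identical with the basis $\{S^i_q\}$). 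Since $\dim\mathcal B_k(\Tref)=(d^2-1)\dim\Poly^{k-1}=\dim\Poly^{k-1}(\Tref,\dd)$ by Lemma~\ref{lem::structurebubblespace}, nondegeneracy on equidimensional spaces upgrades to $\sup_{\hat\tau}\big(\int_{\Tref}\hat\tau:\hat g\big)/\|\hat\tau\|_{\Tref}\gtrsim\|\hat g\|_{\Tref}$. Mapping through $\MM$ (Lemma~\ref{lem::nortangcont}), which preserves $\mathcal B_k$ and turns $\int_T\MM(\hat\tau):g\,\dx$ into $\pm\int_{\Tref}\hat\tau:\tilde g\,\dxhat$ with $\tilde g:=F_T^{-1}\hat g F_T$ (trace free of degree $k-1$), $\hat g:=g\circ\phi_T$ and $\|\tilde g\|_{\Tref}\sim\|\hat g\|_{\Tref}$, the $h$-powers cancel in the normalized quotient, so the inf-sup transfers uniformly to each physical $T$. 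After a sign flip this gives $\tau_h^B\in\mathcal B_k(T)$ with $-\int_T\tau_h^B:g\,\dx\gtrsim\|g\|_T^2$ and $\|\tau_h^B\|_T\lesssim\|g\|_T$; as $(\tau_h^B)_{nt}=0$, summation yields $-\sum_T\int_T\tau_h^B:g\,\dx\gtrsim D$, $\Sigmanormh{\tau_h^B}\lesssim\sqrt D$, and no facet contribution.

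Combining, with $\tau_h=\tau_h^B+\gamma\tau_h^F$ and the Cauchy--Schwarz bound $|\sum_T\int_T\tau_h^F:g\,\dx|\le(\sum_T\|\tau_h^F\|_T^2)^{1/2}D^{1/2}\lesssim\sqrt J\,\sqrt D$,
\[
  \blftwo(\tau_h,v_h)=\Big(-\sum_T\int_T\tau_h^B:g\,\dx\Big)+\gamma J-\gamma\sum_T\int_T\tau_h^F:g\,\dx\;\ge\;c_1 D+\gamma J-\gamma C\sqrt J\,\sqrt D.
\]
A Young's inequality absorbing the cross term, followed by a sufficiently small choice of $\gamma$, makes both coefficients positive and gives $\blftwo(\tau_h,v_h)\gtrsim D+J$; simultaneously $\Sigmanormh{\tau_h}\lesssim\sqrt D+\gamma\sqrt J\lesssim\sqrt{D+J}$, and $\tau_h\neq0$ whenever $D+J>0$. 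Restricting finally to $v_h\in\Velspacehdivfree$, where $\div v_h=0$ gives $g=\nabla v_h$ and $D=\sum_T\|\nabla v_h\|_T^2$, the estimate $\|\jump{(v_h)_t}-\proj_F^{k-1}\jump{(v_h)_t}\|_F\lesssim h^{1/2}\|\nabla v_h\|_T$ (as in the proof of Lemma~\ref{lem::normeqvel}) yields $\devnorm{v_h}^2\sim D+J$, which by~\eqref{eq:equivVh} is $\sim\Velnormh{v_h}^2$; combining this with the two displayed bounds produces the stated inf-sup.

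The main obstacle is the reference bubble inf-sup: proving nondegeneracy of the $\mathcal B_k(\Tref)$--$\Poly^{k-1}(\Tref,\dd)$ pairing and matching the two dimensions exactly, so that nondegeneracy is equivalent to stability. A secondary point is that the facet term of $\blftwo$ controls only the degree $k-1$ part of the tangential jump, so the full jump entering $\devnorm{\cdot}$ is recovered only after invoking $\div v_h=0$; this is why the conclusion is stated on $\Velspacehdivfree$ and why, as noted before the lemma, the divergence of a general $v_h\in\Velspaceh$ must be controlled separately by a pressure test function.
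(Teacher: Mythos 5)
Your proof is correct and follows the same overall architecture as the paper's -- a splitting of the test tensor into an interior (bubble) part controlling the elementwise deviatoric gradients and a facet part controlling the tangential jumps, combined via Cauchy--Schwarz and Young with a small parameter -- but the two key building blocks are constructed differently. For the bubble part, the paper writes down an explicit test function, $\tau_h^0=\sum_{F\in\facetT}-(S^F:\Dev{\nabla v_h})\,\lambda_T^F S^F$, so that the volume term becomes the manifestly nonnegative quantity $\int_T\sum_F(S^F:\Dev{\nabla v_h})^2\lambda_T^F$, and then invokes a reference-element norm equivalence; you instead prove that the $L^2$ pairing between $\mathcal{B}_k(\Tref)$ and $\Poly^{k-1}(\Tref,\dd)$ is nondegenerate (your argument with $h_i=\sum_j(S^i:S^j)g_j$, the positivity of $\lambda_i$, and the invertibility of the Gram matrix of the basis of $\dd$ is sound), upgrade it to an inf-sup by the equidimensionality supplied by Lemma~\ref{lem::structurebubblespace}, and transfer it to $T$ through $\MM$, where the $h$-powers indeed cancel as you claim. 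Your route is less explicit but more systematic: it would survive an enrichment of the bubble space and isolates exactly what is needed (a nondegenerate pairing between equidimensional spaces), whereas the paper's choice is constructive and makes the positivity of the volume term immediate. For the facet part you prescribe the full $(\tau_h^F)_{nt}|_F=h^{-1}\proj_F^{k-1}\jump{(v_h)_t}$ via the degrees of freedom, while the paper uses only the lowest-order facet function and $\proj_F^0$; both work, yours controls more of the jump at the price of needing the full unisolvency of Theorem~\ref{thm:finiteelement}, and both ultimately rely on the same Poincar\'e-type estimate to recover the unprojected jump. Your closing observation -- that the facet term only senses the projected jump, so that the equivalence with $\devnorm{v_h}$ (and hence the displayed inf-sup) is cleanly justified only on $\Velspacehdivfree$ where $\nabla v_h=\Dev{\nabla v_h}$ -- is a point the paper handles with exactly the same degree of care (it too closes the argument through Lemma~\ref{lem::normeqvel} and \eqref{eq:equivVh}), so this is not a gap relative to the paper, and the full LBB condition of Theorem~\ref{th::lbbcondition} still follows from your version since the missing piece of the jump is absorbed by the $\|\div(v_h)\|$ term contributed by the pressure test function.
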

\begin{proof}
  Since the ideas are the same for $d=2$ and $3$, 
  for ease of exposition,   we give the details of the proof only 
  in the $d=2$ case.
  Because of the decomposition of the degrees of freedom into face and
  interior degrees of freedom (see \eqref{eq::facedofs} and \eqref{eq::eldofs}), we may decompose
  $\Sigma_h = \Sigma_h^0 \oplus \Sigma_h^1$ where
  $ \Sigma_h^0 = \oplus_{K \in \mesh} \mathcal{B}_k(T) $ and
  $\Sigma_h^1$ is the span of facet shape functions (see also Remark \ref{rem::basis}). 
  In particular,
  $\Sigma_h^1 $ contains the lowest order shape function $S^F$ with
  the property that $S^F_{nt} \in n_T^\perp$ and $||S^F_{nt}||_2 = 1$ on the facet $F$ and equals
  $(0,0)$ on all other facets in $\facets$. ($S^F$ can be explicity
  written down by mapping~\eqref{eq:refbasisface} or by appropriately
  scaling~\eqref{eq:basisface}). Given any $v_h \in V_h^0$, define 
  \begin{equation}
    \label{eq:19}
  \tau_h^0 := \sum_{T \in \mesh} \;\sum_{F \in \facetT} 
  -( S^F : \Dev{
  \nabla v_h}) \lambda_T^F S^F,
    \qquad
      \tau_h^1 := \sum_{F \in \facets} \frac 1 h 
  (\proj_F^0 \jump{(v_h)_t}) S^F,   \end{equation}
  where
  $\lambda_T^F$ is the barycentric coordinate of $T$
  that vanishes on
  $F$ (thus is $\lambda_T^FS^F$ is a linear inner $\normal\tangential$-bubble). Below we shall construct a linear combination of these
  functions to obtain the $\tau_h$ stated in the lemma.

  By~\eqref{eq:scale-s_L2norm} and~\eqref{eq:scale-s_nt}, a
  scaling argument (like in Lemma~\ref{lem::normeqsigma}) shows that 
  there is a mesh-independent $C_1$ such that 
  \begin{align} \label{eq:lbbtestfunctionnormestfac2}
  \big\| \tau_h^1 \big\|^2_{\Sigma_h}
    \le C_1 \sum\limits_{F \in \facets} 
    \frac{1}{h} 
    \big\| \proj_F^0 \jump{ (v_h)_t } \big\|_{F}^2
    \le 
    C_1\sum\limits_{F \in \facets} 
    \frac{1}{h} \left\| \jump{{ (v_h)_t}} \right\|_{F}^2.
  \end{align}
  A similar scaling argument also shows that 
  \begin{align} \label{eq:lbbtestfunctionnormesteltwo2}
    \Sigmanormh{ \tau_h^0 }^2
    \lesssim
    \sum\limits_{T \in \mesh}  \| \Dev{\nabla v_h} \|_{T}^2.
  \end{align}
  By construction, $(\tau_h^0)_{nt} = (0,0)$ and
  \begin{align*}
    b_2(\tau_h^0, v_h) = -\int_T \tau_h^0 : \nabla v_h \dx
    = 
    \int_T \sum_{F \in \facetT} 
    (S^F : \Dev{\nabla v_h})^2  \lambda_T^F.
  \end{align*}
  Since the functions $S^F$ form a basis for $\mathbb{D}$ 
  by Lemma~\ref{lem::normaltangentialprop}, 
  a scaling argument  shows that 
  \begin{align}\label{eq:lbbtestfunctionnormestel2}                                  
    b_2(\tau_h^0, v_h)
    & \gtrsim
     \sum\limits_{T \in \mesh}  \| \Dev{\nabla v_h} \|_{T}^2.
  \end{align}
  
  Next, set $\tau_h = \gamma_0 \tau_h^0 + \gamma_1 \tau_h^1$ where
  $\gamma_0$ and $\gamma_1$ are positive constants to be chosen. Then
  \begin{align*}
    b_2(\tau_h, v_h) 
    & \gtrsim
      \gamma_0 \sum\limits_{T \in \mesh}  \| \Dev{\nabla v_h} \|_{T}^2 + \gamma_1 b_2(\tau_h^1, v_h)
    && \text{by \eqref{eq:lbbtestfunctionnormestel2}}
    \\
    & =  
      \gamma_0 \sum\limits_{T \in \mesh}  \| \Dev{\nabla v_h} \|_{T}^2
      + \gamma_1
      \left(
      \sum_{T \in \mesh} -\int_T \tau_h^1: \nabla v_h \dx 
      + \sum_{F \in \facets} \int_F (\tau_h^1)_{nt}  \cdot \jump{(v_h)_t} \ds
      \right)
    \\
    & 
      = \gamma_0 \sum\limits_{T \in \mesh}  \| \Dev{\nabla v_h} \|_{T}^2
      - \gamma_1
      \sum_{T \in \mesh} \int_T \tau_h^1: \Dev{\nabla v_h} \dx 
      + \gamma_1 \sum_{F \in \facets} \frac 1 h 
      \big\| \proj_F^0 \jump{(v_h)_t} \big\|_{F}^2 \quad
    && \text{by~\eqref{eq:19}}.
  \end{align*}
  Applying the Cauchy Schwarz inequality and also Young's inequality with $\delta>0$ we further have
  \begin{align*}           
    b_2(\tau_h, v_h) 
    & 
      \gtrsim 
      \gamma_0 \sum\limits_{T \in \mesh}  \| \Dev{\nabla v_h} \|_{T}^2
      - \gamma_1
      \| \tau_h^1\|_{\Sigma_h} 
      \sqrt{ 
      \sum\limits_{T \in \mesh}  \| \Dev{\nabla v_h} \|_{T}^2
      }
      +
      \gamma_1\sum_{F \in \facets} \frac 1 h 
      \big\| \proj_F^0 \jump{(v_h)_t} \big\|_{F}^2
    \\
    & 
      \gtrsim
      \left(\gamma_0 - \frac{\gamma_1\delta
      }{2}\right) 
      \sum\limits_{T \in \mesh}  \| \Dev{\nabla v_h} \|_{T}^2 + 
      \left(1-\frac{C_1}{2 \delta
      }\right) \frac{\gamma_1}{h}
      \sum_{F \in \facets} \big\| \proj_F^0\jump{(v_h)_t} \big\|_{F}^2,       
  \end{align*}
  where in the last step we also used
  \eqref{eq:lbbtestfunctionnormestfac2}.
  Choosing
  $\delta = C_1$, $\gamma_1 = 1/\delta = 1/C_1,$ and $\gamma_0 =1$, 
  \begin{subequations}
    \label{eq:2cond4infsup}
    \begin{equation}
      \label{eq:23}
      b_2(\tau_h, v_h) 
      \gtrsim 
     \sum\limits_{T \in \mesh}  \| \Dev{\nabla v_h} \|_{T}^2 + 
      \sum_{F \in \facets} 
      \frac{1}{h}
      \big\| \proj_F^0\jump{(v_h)_t} \big\|_{F}^2.
    \end{equation}
    Let us also note that~\eqref{eq:lbbtestfunctionnormestfac2} and
    \eqref{eq:lbbtestfunctionnormesteltwo2} yield
    \begin{equation}
      \label{eq:22}
      \| \tau_h \|_{\Sigma_h} \lesssim 
      \sum\limits_{T \in \mesh}  \| \Dev{\nabla v_h} \|_{T}^2 + 
      \sum_{F \in \facets} 
      \frac{1}{h}
      \big\| \jump{(v_h)_t} \big\|_{F}^2.
    \end{equation}
  \end{subequations}
  The estimates~\eqref{eq:2cond4infsup} and the norm equivalences of
  \eqref{eq:equivVh} and Lemma \ref{lem::normeqvel} complete the proof.
\end{proof}

\begin{theorem}[Discrete LBB-condition] \label{th::lbbcondition}
  For all $v_h \in V_h$, 
  \begin{align} \label{eq:biglbb}
    \sup\limits_{(\tau_h,{q_h}) \in \Stressspaceh \times \Presspaceh}
    \frac{\blfone(v_h, {q_h}) + \blftwo(\tau_h,v_h)}
    {\Sigmanormh{ \tau_h }  + \Presnormh{ q_h} }
    \gtrsim \Velnormh{ v_h } 
  \end{align}
\end{theorem}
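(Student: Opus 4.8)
The plan is to upgrade the partial inf-sup bound for $\blftwo$ from Lemma~\ref{lem:intermediateLBB} to the full two-field condition by adding a pressure test function that captures the divergence. The key structural observation is the norm splitting: since $\| \nabla v_h \|_T^2 \sim \| \dev{\nabla v_h} \|_T^2 + \| \div(v_h) \|_T^2$ on each $T \in \mesh$, and the tangential-jump contributions to $\Velnormh{ v_h }^2$ and $\devnorm{ v_h }^2$ are identical, we obtain
\[
\Velnormh{ v_h }^2 \sim \devnorm{ v_h }^2 + \| \div(v_h) \|_{L^2(\om)}^2 .
\]
Hence it suffices to bound the supremum in~\eqref{eq:biglbb} from below by $\big( \devnorm{ v_h }^2 + \| \div(v_h) \|_{L^2(\om)}^2\big)^{1/2}$.

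Given any $v_h \in V_h$, I would select two test functions at once. For the stress variable, take the $\tau_h \in \Sigma_h$ furnished by Lemma~\ref{lem:intermediateLBB}, which satisfies $\blftwo(\tau_h, v_h) \gtrsim \devnorm{ v_h }^2$ and $\Sigmanormh{ \tau_h } \lesssim \devnorm{ v_h }$. Note that this $\tau_h$ is trace-free, so by~\eqref{eq::blftwoequitwo} the form $\blftwo(\tau_h, v_h)$ only probes the deviatoric part $\dev{\nabla v_h}$ and the tangential jumps, and is entirely unaffected by $\div(v_h)$. For the pressure, set $q_h := \div(v_h)$. This is admissible: $q_h$ is piecewise polynomial of degree $k-1$, and since $v_h \in H_0(\div, \om)$ we have $\int_\om \div(v_h) = \int_\Gamma v_h \cdot n = 0$, so $q_h \in Q_h$. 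With this choice $\blfone(v_h, q_h) = (\div(v_h), \div(v_h)) = \| \div(v_h) \|_{L^2(\om)}^2$ and $\Presnormh{ q_h } = \| \div(v_h) \|_{L^2(\om)}$.

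I would then test the combined bilinear form with the single pair $(\tau_h, q_h)$. Because $\blfone(v_h, q_h)$ and $\blftwo(\tau_h, v_h)$ are both nonnegative and respectively control the divergence and deviatoric parts, the numerator obeys $\blfone(v_h, q_h) + \blftwo(\tau_h, v_h) \gtrsim \| \div(v_h) \|_{L^2(\om)}^2 + \devnorm{ v_h }^2$, while the denominator obeys $\Sigmanormh{ \tau_h } + \Presnormh{ q_h } \lesssim \devnorm{ v_h } + \| \div(v_h) \|_{L^2(\om)}$. Writing $a = \devnorm{ v_h }$ and $b = \| \div(v_h) \|_{L^2(\om)}$ and using $a + b \le \sqrt{2}\,\sqrt{a^2 + b^2}$, the quotient is $\gtrsim (a^2 + b^2)/\sqrt{a^2 + b^2} = \sqrt{a^2 + b^2} \sim \Velnormh{ v_h }$ by the equivalence above, which yields~\eqref{eq:biglbb}.

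The step I expect to need the most care is the \emph{decoupling}: one must check that the trace-free stress test function neither contributes to nor is corrupted by $\div(v_h)$, and that no relative scaling between the two test functions is required. The payoff of arranging both $\blfone(v_h, q_h)$ and $\blftwo(\tau_h, v_h)$ to be simultaneously nonnegative is precisely that the naive unweighted combination works through the quadratic-over-linear estimate; everything else (admissibility of $q_h$, the norm equivalence, and the bounds inherited from Lemma~\ref{lem:intermediateLBB}) is routine.
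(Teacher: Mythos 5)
Your proposal is correct and follows essentially the same route as the paper: take the stress test function $\tau_h$ supplied by Lemma~\ref{lem:intermediateLBB} together with the pressure test function $q_h = \div(v_h)$ (admissible since $\div(\Velspaceh)=\Presspaceh$), and conclude via the quadratic-over-linear estimate and the norm equivalence $\Velnormh{v_h}^2 \sim \devnorm{v_h}^2 + \|\div(v_h)\|_{L^2(\om)}^2$. The additional details you supply (zero mean of $q_h$, the decoupling of the trace-free stress from $\div(v_h)$) are consistent with, and slightly more explicit than, the paper's argument.
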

\begin{proof}
  By Lemma~\ref{lem:intermediateLBB}, for any $v_h \in V_h$, there is a
  $\tau_h \in \Sigma_h$ satisfying 
  $    \blftwo(\tau_h, v_h) \gtrsim \devnorm{v_h}^2$ and $\| \tau_h
  \|_{\Sigma_h} \lesssim \devnorm{v_h}$.
  Next we choose the pressure variable $q_h = \divergence (v_h)$, 
  which is possible
  due to the specific choice of $\Velspaceh$ and $\Presspaceh$, so
  that 
  $\blfone(v_h, {q}_h) =  \| \divergence (v_h) \|_{Q_h}^2.
  $
  With these choices of $\tau_h$ and $q_h$, we have 
  \begin{align*}
    \frac{\blfone(v_h, q_h) + \blftwo(\tau_h,v_h)}
    {\Sigmanormh{ \tau_h }  +  \Presnormh{ q_h }}
    \ge
    \frac{ \devnorm{v_h}^2 + \| \divergence(v_h) \|_{Q_h}^2 }
    {\Sigmanormh{ \tau_h }  +  \Presnormh{ q_h }}
    \gtrsim 
      \| v_h \|_{V_h}.
  \end{align*}
\end{proof}

\begin{remark}[Residual stabilization alternative] \label{rem::stabilization}
  A crucial ingredient in the proof of the LBB condition was the
  choice made in~\eqref{eq:19}. The choice of $\tau_h^0$ in terms of
  $(S^F: \dev{\nabla v_h}) \lambda_T^F S^F $ was admissible 
  as $\Dev{\nabla \Velvarh}$ is a polynomial of degree $k-1$ 
and $\Stressspaceh$ contains the element-wise bubbles of degree~$k$
in $\mathcal{B}_k(T)$. This choice would not be admissible if we had
used bubbles in $\mathcal{B}_{k-1}(T)$ instead of $\mathcal{B}_k(T)$.
Therefore, 
if we replace the stress
space by the lower degree space 
\[
\widetilde{\Sigma}_h := \{ \tau_h \in \Poly^{k-1}(\mesh,\rr^{d \times
                  d}):\;\trace{\tau_h} = 0,\;
                  \jump{(\tau_h)_{\normal\tangential}} =0\},
\]
the above proof can no longer be used to conclude stability of the resulting method.
Yet, its possible to get a good method (with optimal error convergence results) using $\widetilde{\Sigma}_h$
by a residual-based stabilization term. Define
 $c: \big[L^2(\om, \rr^{d \times d}) \times V\big] \times
\big[ L^2(\om, \rr^{d \times d}) \times V\big] \rightarrow \rr$ by 
 \begin{align*}
   c((\Stressvar, \Velvar), (\Stressvartest, \Velvartest)) := - \sum\limits_{T \in \mesh} \frac{\nu}{2} \int_T  (\frac{1}{\nu}\Stressvar - \nabla \Velvar ):(\frac{1}{\nu}\Stressvartest -\nabla \Velvartest) \dx.
 \end{align*}
When this form is added to the
system~\eqref{eq::discrmixedstressstokesweak} and $\Sigma_h$ is
replaced by $\widetilde{\Sigma}_h$, 
it is possible to prove stability.
\end{remark}

\begin{theorem}[Consistency] \label{th::consistency}
  The mass conserving mixed stress formulation
  \eqref{eq::discrmixedstressstokesweak} is consistent in the
  following sense. If the exact solution of the mixed Stokes
  problem~\eqref{eq::mixedstressstokes} is such that
  $\Velvar \in H^1(\om, \rr^d)$,
  $\Stressvar \in H^1(\om, \rr^{d \times d})$ and
  $\Presvar \in L^2_0(\om, \rr)$, then
  \begin{align*}
    \ablf(\Stressvar, \Stressvarhtest) + \blftwo(\Stressvarhtest, \Velvar) + \blftwo(\Stressvar, \Velvarhtest) + \blfone(\Velvarhtest, \Presvar) + \blfone(\Velvar, \Presvarhtest)   = (-\Forcevar, \Velvarhtest)_\om
  \end{align*}
  for all $\Velvarhtest \in \Velspaceh, \Presvarhtest \in \Presspaceh,
  $ and $\Stressvarhtest \in \Stressspaceh.$
\end{theorem}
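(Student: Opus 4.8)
The plan is to derive the stated identity by testing each of the three exact equations \eqref{eq::mixedstressstokes-a}--\eqref{eq::mixedstressstokes-c} against the appropriate discrete test function and then recombining. I would group the five terms on the left-hand side into three packets: first $\ablf(\Stressvar,\Stressvarhtest)+\blftwo(\Stressvarhtest,\Velvar)$, which I expect to cancel identically; second $\blftwo(\Stressvar,\Velvarhtest)+\blfone(\Velvarhtest,\Presvar)$, which should reproduce the load term $(-\Forcevar,\Velvarhtest)_\om$; and third $\blfone(\Velvar,\Presvarhtest)$, which should vanish. The whole argument rests on choosing, for each pairing, the more convenient of the two equivalent representations \eqref{eq::blftwoequione} and \eqref{eq::blftwoequitwo} of $\blftwo$, and on checking that all interface and boundary integrals drop out under the assumed regularity.

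For the first packet I would evaluate $\blftwo(\Stressvarhtest,\Velvar)$ via \eqref{eq::blftwoequitwo}. Since the exact velocity lies in $H^1_0(\om,\rr^d)$, its tangential jumps $\jump{\Velvar_\tangential}$ vanish across interior facets and $\Velvar_\tangential=0$ on $\Gamma$, so the facet sum disappears and $\blftwo(\Stressvarhtest,\Velvar)=-\sum_{T\in\mesh}\int_T \Stressvarhtest:\nabla\Velvar$. Substituting $\nabla\Velvar=\visc^{-1}\Dev{\Stressvar}$ from \eqref{eq::mixedstressstokes-a} and using $\trace{\Stressvarhtest}=0$ (so that $\Stressvarhtest=\Dev{\Stressvarhtest}$ may be inserted into the second slot) gives $\blftwo(\Stressvarhtest,\Velvar)=-(\visc^{-1}\Dev{\Stressvar},\Dev{\Stressvarhtest})=-\ablf(\Stressvar,\Stressvarhtest)$, which cancels the first term. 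The third packet is immediate: $\blfone(\Velvar,\Presvarhtest)=(\divergence(\Velvar),\Presvarhtest)=0$ by the incompressibility constraint \eqref{eq::mixedstressstokes-c}.

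For the second packet I would instead use the divergence representation \eqref{eq::blftwoequione} for $\blftwo(\Stressvar,\Velvarhtest)$. Because $\Stressvar\in H^1(\om,\rr^{d\times d})$ the normal-normal trace is single-valued, so $\jump{\Stressvar_{nn}}=0$ on interior facets, while on boundary facets the factor $(\Velvarhtest)_n$ vanishes since $\Velvarhtest\in\Velspaceh\subset H_0(\divergence,\om)$; hence $\blftwo(\Stressvar,\Velvarhtest)=(\divergence(\Stressvar),\Velvarhtest)$. Reading \eqref{eq::mixedstressstokes-b} in $L^2(\om)$ gives $\nabla\Presvar=\divergence(\Stressvar)+\Forcevar\in L^2(\om)$, so $\Presvar\in H^1(\om)$ and $\divergence(\Stressvar)=\nabla\Presvar-\Forcevar$. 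The global $H(\divergence)$ integration-by-parts formula, together with $\Velvarhtest\cdot n=0$ on $\Gamma$, then yields $(\nabla\Presvar,\Velvarhtest)=-(\Presvar,\divergence(\Velvarhtest))=-\blfone(\Velvarhtest,\Presvar)$, so that $\blftwo(\Stressvar,\Velvarhtest)+\blfone(\Velvarhtest,\Presvar)=(-\Forcevar,\Velvarhtest)_\om$. Summing the three packets gives the claim.

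The computation is essentially bookkeeping, so I do not anticipate a deep obstacle; the part requiring the most care is verifying that every facet and boundary term vanishes for the correct reason --- tangential continuity of the exact $\Velvar$ for the $\Stressvarhtest$-test, and normal-normal continuity of the exact $\Stressvar$ together with the essential condition $(\Velvarhtest)_n|_\Gamma=0$ for the $\Velvarhtest$-test --- and in noting that the hypotheses silently upgrade $\Presvar$ to $H^1(\om)$, which is what legitimises integrating the pressure gradient by parts.
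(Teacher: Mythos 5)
Your proposal is correct and follows essentially the same route as the paper's proof: the same choice of representation \eqref{eq::blftwoequione} for $\blftwo(\Stressvar,\Velvarhtest)$ and \eqref{eq::blftwoequitwo} for $\blftwo(\Stressvarhtest,\Velvar)$, the same cancellation of $\ablf(\Stressvar,\Stressvarhtest)$ against $\blftwo(\Stressvarhtest,\Velvar)$ via $\Dev{\Stressvar}=\visc\nabla\Velvar$, and the same final combination of the momentum equation with integration by parts of the pressure term. Your explicit remark that the hypotheses implicitly give $\Presvar\in H^1(\om)$ is a slightly more careful bookkeeping of a step the paper performs without comment, but it does not change the argument.
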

\begin{proof}
  As the exact solutions $\Stressvar$ and $\Velvar$ are continuous we have $\jump{\Stressvar_{\normal\normal}}=0$ and $\jump{u_\tangential}=0$ on all faces $F \in \facets$ and thus using representations  \eqref{eq::blftwoequione} and \eqref{eq::blftwoequitwo} we have
  \begin{align*}
    \blftwo(\Stressvar, \Velvarhtest) = \sum\limits_{T \in \mesh} \int_T \divergence(\Stressvar) \cdot \Velvarhtest \dx - \sum\limits_{F \in \facets} \int_F \jump{\Stressvar_{nn}} (\Velvarhtest)_n \ds = \sum\limits_{T \in \mesh} \int_T \divergence(\Stressvar) \cdot \Velvarhtest \dx
  \end{align*}
  and
    \begin{align*}
    \blftwo(\Stressvarhtest, \Velvar) = -\sum\limits_{T \in \mesh} \int_T \Stressvarhtest : \nabla \Velvar \dx + \sum\limits_{F \in \facets} \int_F (\Stressvarhtest)_{nt} \cdot \jump{\Velvar_t} \ds = -\sum\limits_{T \in \mesh} \int_T \Stressvarhtest : \nabla \Velvar \dx . 
  \end{align*}
 Using $\divergence(\Velvar)=0$ we further get that $\blfone(\Velvar, \Presvarhtest) = 0$, so all together we have
   \begin{align*}
        \ablf(\Stressvar, \Stressvarhtest) &+ \blftwo(\Stressvarhtest, \Velvar) + \blftwo(\Stressvar, \Velvarhtest) + \blfone(\Velvarhtest, \Presvar) + \blfone(\Velvar, \Presvarhtest)  \\
                                       &= \int_\om \frac{1}{\nu} \Dev{\Stressvar} : \Dev{\Stressvarhtest} \dx -\sum\limits_{T \in \mesh} \int_T \Stressvarhtest : \nabla \Velvar \dx + \sum\limits_{T \in \mesh} \int_T \divergence(\Stressvar) \cdot \Velvarhtest \dx   + \int_\om \divergence(\Velvarhtest)\Presvar \dx 
   \end{align*}
   For the exact solution we have $\Dev{\Stressvar} = \nu \nabla \Velvar$. Further, as $\divergence(\Velvar)=0$, a simple calculation shows that $\Stressvarhtest : \nabla \Velvar= \Stressvarhtest : \Dev{\nabla \Velvar} = \Dev{\Stressvarhtest} : \nabla \Velvar$. Using integrating by parts for the last integral we conclude 
  \begin{align*}
        \ablf(\Stressvar, \Stressvarhtest) &+ \blftwo(\Stressvarhtest, \Velvar) + \blftwo(\Stressvar, \Velvarhtest) + \blfone(\Velvarhtest, \Presvar) + \blfone(\Velvar, \Presvarhtest)  \\
                                           &= \int_\om  \nabla \Velvar : \Dev{\Stressvarhtest} \dx -\sum\limits_{T \in \mesh} \int_T \Dev{\Stressvarhtest} : \nabla \Velvar \dx + \sum\limits_{T \in \mesh} \int_T \divergence(\Stressvar) \cdot \Velvarhtest \dx   + \int_\om \divergence(\Velvarhtest)\Presvar \dx  \\
    &= \int_\om \divergence(\Stressvar) \cdot \Velvarhtest \dx   +
      \int_\om \divergence(\Velvarhtest)\Presvar \dx  =\int_\om
      \big[ \divergence(\Stressvar) - \nabla \Presvar\big]
      \cdot \Velvarhtest \dx = \int_\om - \Forcevar \Velvarhtest \dx.
   \end{align*}
\end{proof}

\subsection{Error estimates}

\begin{theorem}[Optimal convergence rates] \label{th::optimalconvergence}
  Let $\Velvar \in H^1(\om, \rr^d) \cap H^m(\mesh, \rr^d)$, $\Stressvar \in  H^1(\om, \rr^{d \times d}) \cap H^{m-1}(\mesh, \rr^{d \times d})$ and $\Presvar \in  L^2_0(\om, \rr) \cap H^{m-1}(\mesh, \rr)$ be the exact solution of the mixed Stokes problem \eqref{eq::mixedstressstokes}. Further let $\Stressvarh$, $\Velvarh$ and $\Presvarh$ be the solution of the mass conserving mixed stress formulation \eqref{eq::discrmixedstressstokesweak}. For $s = \min(m-1,k)$ there holds
  \begin{align*}
    \Velnormh{ \Velvar - \Velvarh } + \frac{1}{\visc} \Sigmanormh{ \Stressvar - \Stressvarh} + \frac{1}{\visc} \Presnormh{\Presvar - \Presvarh} \lesssim h^s \left( ||u||_{H^{s+1}(\mesh)} + \frac{1}{\visc} ||\Stressvar||_{H^{s}(\mesh)} + \frac{1}{\visc} ||\Presvar||_{H^{s}(\mesh)} \right).
  \end{align*}  
\end{theorem}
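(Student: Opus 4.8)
The plan is to recognize that the discrete system~\eqref{eq::discrmixedstressstokesweak} fits the classical two-field Brezzi saddle-point framework after regrouping the unknowns. I would treat $z_h = (\Stressvarh, \Presvarh)$ as a single \emph{primal} variable and keep $\Velvarh \in \Velspaceh$ as the \emph{multiplier}, setting $\mathcal{A}(z_h, z_h') := \ablf(\Stressvarh, \Stressvarhtest)$ (the pressure does not enter the diagonal block) and $\mathcal{B}((\Stressvarhtest,\Presvarhtest), \Velvarhtest) := \blftwo(\Stressvarhtest, \Velvarhtest) + \blfone(\Velvarhtest, \Presvarhtest)$. With this identification the first and third equations of~\eqref{eq::discrmixedstressstokesweak} combine into $\mathcal{A}(z_h, z_h') + \mathcal{B}(z_h', \Velvarh) = 0$ for all $z_h'$, while the second reads $\mathcal{B}(z_h, \Velvarhtest) = (-\Forcevar, \Velvarhtest)$. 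The decisive observation is that $\ker\mathcal{B}$ is \emph{exactly} the space $\kernel$ of Lemma~\ref{th::coercivity}, so that lemma supplies the coercivity of $\mathcal{A}$ on $\ker\mathcal{B}$; Theorem~\ref{th::lbbcondition} is precisely the inf-sup condition for $\mathcal{B}$; and Lemma~\ref{th:continuity} gives boundedness of both forms. Thus no new stability argument is needed, only a repackaging of the established lemmas.

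To absorb the viscosity weights cleanly I would measure the primal variable in $\|(\Stressvartest,\Presvartest)\|_W := \visc^{-1/2}\big(\Sigmanormh{\Stressvartest} + \Presnormh{\Presvartest}\big)$ and the multiplier in $\|\Velvartest\|_U := \visc^{1/2}\,\Velnormh{\Velvartest}$. A short check shows that in these weighted norms all three Brezzi constants are independent of both $h$ and $\visc$: the $\visc$ in Lemma~\ref{th::coercivity} cancels against $a(\Stressvarh,\Stressvarh)=\visc^{-1}\Sigmanormh{\Stressvarh}^2$, while the factor $\sqrt{\visc}$ produced by rescaling $\mathcal{B}$ is exactly matched by the $\sqrt{\visc}$ in $\|\cdot\|_U$. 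The standard Brezzi theory then yields a single combined inf-sup bound for the full form $\blfbig$ appearing in Theorem~\ref{th::consistency} over $\Stressspaceh\times\Velspaceh\times\Presspaceh$, with $\visc$-independent constant.

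Next I would invoke consistency. Since the exact triple satisfies $\Velvar\in\Hone(\om,\rr^d)$, $\Stressvar\in H^1(\om,\rr^{d\times d})$, $\Presvar\in L^2_0(\om)$, Theorem~\ref{th::consistency} shows it solves the same equation as the discrete one when tested against discrete functions; subtracting gives Galerkin orthogonality, i.e.\ $\blfbig$ applied to the error $(\Stressvar-\Stressvarh,\Velvar-\Velvarh,\Presvar-\Presvarh)$ against any discrete test triple vanishes. There is \emph{no} Strang consistency term, precisely because the facet contributions in $\blftwo$ drop out for the continuous exact solution (as used in the proof of Theorem~\ref{th::consistency}). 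With stability and orthogonality in hand, the argument is the usual one for saddle points: choose the interpolants $I_{\Stressspaceh}\Stressvar$, the $BDM^k$-interpolant $\tilde{\Velvarh}$ of $\Velvar$, and the $L^2$-projection of $\Presvar$ onto $\Presspaceh$; apply the combined inf-sup to the \emph{discrete} error (discrete solution minus interpolant), replace $\blfbig$ evaluated there by the interpolation error via orthogonality, bound the latter by continuity, and finish with a triangle inequality, so that the total error is controlled by the interpolation error in the weighted norms.

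Finally I would insert the approximation estimates. Theorem~\ref{thm::interpolationop} handles $\Stressvar$, and its facet term $\sum_{F\in\facets} h\,\|(\Stressvar-I_{\Stressspaceh}\Stressvar)_{nt}\|_F^2$ is exactly what controls the nonconforming facet part of $\blftwo$ through the representation~\eqref{eq::blftwoequitwo} (note $\jump{(\Stressvar-I_{\Stressspaceh}\Stressvar)_{nt}}=0$, so this interpolation error lies in the domain of $\blftwo$). Standard $BDM^k$ estimates handle $\Velvar$, with the tangential-jump penalty $h^{-1}\|\jump{(\Velvar-\tilde{\Velvarh})_t}\|_F^2$ treated via $\jump{\Velvar_t}=0$ and a scaled trace inequality, and the $L^2$-projection estimate handles $\Presvar$; each contributes $h^s$ times the appropriate broken Sobolev norm with $s=\min(m-1,k)$. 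Dividing the resulting $\visc^{\mp1/2}$-weighted estimate by $\sqrt{\visc}$ converts the symmetric weights into the stated weights $1$ on $\Velvar$ and $\visc^{-1}$ on $\Stressvar,\Presvar$. I expect the main obstacle to be the careful viscosity-weight bookkeeping together with confirming that the interpolation-error terms entering the nonconforming form $\blftwo$ are absorbed by the augmented interpolation norm of Theorem~\ref{thm::interpolationop}; the stability itself is essentially automatic once the $(\Stressvar,\Presvar)$-versus-$\Velvar$ regrouping is made.
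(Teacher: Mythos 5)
Your proposal is correct and follows essentially the same route as the paper: the paper's proof likewise splits the error by triangle inequality, measures the discrete error in the weighted product norm $\|(\cdot,\cdot,\cdot)\|_*$, derives inf-sup stability of the combined form $\blfbig$ from Lemma~\ref{th:continuity}, Lemma~\ref{th::coercivity} and Theorem~\ref{th::lbbcondition} via the Brezzi theorem (your explicit regrouping of $(\Stressvarh,\Presvarh)$ versus $\Velvarh$ is exactly the structure those lemmas encode), invokes Theorem~\ref{th::consistency} for Galerkin orthogonality, and absorbs the facet contributions of $\blftwo$ using the augmented interpolation estimate~\eqref{eq::approxsigma}. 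Your viscosity-weight bookkeeping and the observation that the extra facet term $\sum_{F} h\,\|(\Stressvar-I_{\Stressspaceh}\Stressvar)_{nt}\|_{F}^2$ is precisely what is needed beyond the $*$-norm match the paper's final estimate.
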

\begin{proof}
  The proof is based on the discrete stability established above,
  which we shall use after bounding the error by triangle inequality
  into interpolation error and a discrete measure of error, as
  follows:
  \begin{equation}
    \label{eq:trgerrorsplit}
  \begin{aligned}
    \Velnormh{ \Velvar - \Velvarh } & + \frac{1}{\visc} \Sigmanormh{
      \Stressvar - \Stressvarh} +\frac{1}{\visc} \Presnormh{\Presvar
      - \Presvarh} 
      \\                                                                                            \lesssim
&    \;\Velnormh{ \Velvar - I_{\Velspaceh}\Velvar} + \frac{1}{\visc}\Sigmanormh{ \Stressvar - I_{\Stressspaceh}\Stressvar} + \frac{1}{\visc}\Presnormh{\Presvar - I_{\Presspaceh}\Presvar} \\
    + &\;   \Velnormh{ I_{\Velspaceh}\Velvar - \Velvarh } + \frac{1}{\visc}\Sigmanormh{ I_{\Stressspaceh}\Stressvar - \Stressvarh} + \frac{1}{\visc} \Presnormh{I_{\Presspaceh} \Presvar - \Presvarh}.
  \end{aligned}    
  \end{equation}
  Here $I_{\Stressspaceh}$ is the interpolation operator studied in
  Theorem~\ref{thm::interpolationop},  
  $I_{\Velspaceh}$ is the standard $H(\div)$-conforming
  interpolant -- see~\cite{brezzi1985two, RaviaThoma77} -- 
  and   $I_{\Presspaceh}$ is the 
$L^2$ projection into $\Presspaceh$. Note that for $s = \min(m-1,k)$ we have the approximation results
  \begin{align} \label{eq::approxvelpres}
    \Velnormh{ \Velvar - I_{\Velspaceh}\Velvar } \lesssim h^s ||u||_{H^{s+1}(\mesh)} \quad \textrm{and} \quad \Presnormh{\Presvar - I_{\Presspaceh}\Presvar} \lesssim h^s ||p||_{H^{s}(\mesh)}.
  \end{align}
  When this is combined with 
  \eqref{eq::approxsigma} of Theorem~\ref{thm::interpolationop},
  the first three terms  on the right hand side~\eqref{eq:trgerrorsplit} can be bounded as
  needed. 

  To bound  the remaining terms of~\eqref{eq:trgerrorsplit},  we first define the following norm on the product space $ \Velspaceh \times \Stressspaceh \times \Presspaceh$ given by
  \begin{align*}
    ||(\Velvarh,\Stressvarh, \Presvarh) ||_* := \sqrt{\visc} \Velnormh{ \Velvarh } + \frac{1}{\sqrt{\visc}} ( \Sigmanormh{\Stressvarh} + \Presnormh{\Presvarh}).
  \end{align*}
  Using the Brezzi theorem -- see for example in
  \cite{brezzi2012mixed} -- the LBB condition of the bilinear forms $\blfone$ and $\blftwo$ (Theorem \ref{th::lbbcondition}), the coercivity of $\ablf$ (Lemma \ref{th::coercivity}) and the continuity (Lemma \ref{th:continuity}) imply inf-sup stability of the
bilinear form
\begin{align*}
\blfbig(\Velvarh,\Stressvarh, \Presvarh;\Velvarhtest,\Stressvarhtest, \Presvarhtest) := \ablf(\Stressvarh, \Stressvarhtest) + \blfone(\Velvarh,\Presvarhtest) + \blfone(\Velvarhtest,\Presvarh) + \blftwo(\Stressvarh,\Velvarhtest) + \blftwo(\Stressvarhtest,\Velvarh), 
\end{align*}
with respect to the product space norm $||(\cdot,\cdot,\cdot) ||_*$, i.e., 
\begin{align*}
  ||(I_{\Velspaceh}\Velvar - \Velvarh, I_{\Stressspaceh}\Stressvar - \Stressvarh, I_{\Presspaceh} \Presvar - \Presvarh) ||_* 
& \le \sup\limits_{(\Velvarhtest, \Stressvarhtest, \Presvarhtest) \in \Velspaceh \times \Stressspaceh \times \Presspaceh } \frac{ \blfbig(I_{\Velspaceh}\Velvar - \Velvarh,I_{\Stressspaceh}\Stressvar - \Stressvarh, I_{\Presspaceh} \Presvar - \Presvarh;\Velvarhtest,\Stressvarhtest, \Presvarhtest)}{||(\Velvarhtest,\Stressvarhtest, \Presvarhtest) ||_*}
\\& \le
 \sup\limits_{(\Velvarhtest, \Stressvarhtest, \Presvarhtest) \in \Velspaceh \times \Stressspaceh \times \Presspaceh } \frac{ \blfbig(I_{\Velspaceh}\Velvar - \Velvar,I_{\Stressspaceh}\Stressvar - \Stressvar, I_{\Presspaceh} \Presvar - \Presvar;\Velvarhtest,\Stressvarhtest, \Presvarhtest)}{||(\Velvarhtest,\Stressvarhtest, \Presvarhtest) ||_*},
\end{align*}
where we used the consistency result of Theorem~\ref{th::consistency} in the last step.

Next, we estimate the terms that form 
$\blfbig(I_{\Velspaceh}\Velvar - \Velvar,I_{\Stressspaceh}\Stressvar -
\Stressvar, I_{\Presspaceh} \Presvar -
\Presvar;\Velvarhtest,\Stressvarhtest, \Presvarhtest)$.
Using the Cauchy Schwarz inequality,
\begin{align*}
  &\ablf(I_{\Stressspaceh}\Stressvar - \Stressvar, \Stressvarhtest) + \blfone(I_{\Velspaceh}\Velvar - \Velvar,\Presvarhtest ) + \blfone(\Velvarhtest,I_{\Presspaceh} \Presvar - \Presvar) \\
  &\lesssim (\frac{1}{\sqrt{\visc}} \Sigmanormh{I_{\Stressspaceh}\Stressvar - \Stressvar} \frac{1}{\sqrt{\visc}} \Sigmanormh{\Stressvarhtest} ) + (\sqrt{\visc}\Velnormh{I_{\Velspaceh}\Velvar - \Velvar}\frac{1}{\sqrt{\visc}} \Presnormh{\Presvarhtest}) + (\sqrt{\visc}\Velnormh{\Velvartest}\frac{1}{\sqrt{\visc}} \Presnormh{I_{\Presspaceh} \Presvar - \Presvar}) \\
  &\le  ||(I_{\Velspaceh}\Velvar - \Velvar, I_{\Stressspaceh}\Stressvar - \Stressvar, I_{\Presspaceh} \Presvar - \Presvar) ||_* ||(\Velvarhtest,\Stressvarhtest, \Presvarhtest) ||_*
\end{align*}
For the terms including the bilinear form $\blftwo$ we also have by the Cauchy Schwarz inequality applied on each element and each facet
\begin{align*}
  \blftwo(I_{\Stressspaceh}\Stressvar - \Stressvar,\Velvarhtest ) &+ \blftwo(\Stressvarhtest, I_{\Velspaceh}\Velvar - \Velvar ) \\
                                                                  &
                                                                    \lesssim \sum\limits_{F \in \facets} \sqrt{h} ||(I_{\Stressspaceh}\Stressvar - \Stressvar)_{\normal\tangential}||_{F} \frac{1}{\sqrt{h}} ||\jump{(\Velvarhtest)_{\tangential}}||_{F} + \sum\limits_{T \in \mesh} || I_{\Stressspaceh}\Stressvar - \Stressvar||_{T} ||\nabla \Velvarhtest ||_{T}\\
  &+ \sum\limits_{F \in \facets} \sqrt{h} ||(\Stressvarhtest)_{\normal\tangential}||_{F} \frac{1}{\sqrt{h}} ||\jump{(I_{\Velspaceh}\Velvar - \Velvar)_{\tangential}}||_{F} + \sum\limits_{T \in \mesh} ||\Stressvarhtest||_{T} ||\nabla (I_{\Velspaceh}\Velvar - \Velvar)||_{T}.
\end{align*}
Scaling with $\sqrt{\visc}$ and applying  the norm equivalence Lemma~\ref{lem::normeqsigma} finally yields
\begin{align*}
  \blftwo(I_{\Stressspaceh}\Stressvar &- \Stressvar,\Velvarhtest ) + \blftwo(\Stressvarhtest, I_{\Velspaceh}\Velvar - \Velvar ) \lesssim \\
  &\left(\frac{1}{\sqrt{\visc}}( \Sigmanormh{I_{\Stressspaceh}\Stressvar - \Stressvar} + \frac{1}{\sqrt{\visc}} \sqrt{\sum\limits_{F \in \facets} h || (I_{\Stressspaceh}\Stressvar - \Stressvar)_{\normal\tangential} ||^2_{F}} + \sqrt{\visc} \Velnormh{I_{\Velspaceh}\Velvar - \Velvar} \right) ||(\Velvarhtest,\Stressvarhtest, 0) ||_* .
\end{align*}
All together this leads to the estimate
\begin{align*}
  ||(I_{\Velspaceh}\Velvar - \Velvarh,& I_{\Stressspaceh}\Stressvar - \Stressvarh, I_{\Presspaceh} \Presvar - \Presvarh) ||_*  \\
  &\lesssim ||(I_{\Velspaceh}\Velvar - \Velvar,I_{\Stressspaceh}\Stressvar - \Stressvar, I_{\Presspaceh} \Presvar - \Presvar) ||_* + \frac{1}{\sqrt{\visc}} \sqrt{\sum\limits_{F \in \facets} h || (I_{\Stressspaceh}\Stressvar - \Stressvar)_{\normal\tangential} ||^2_{F}}.
\end{align*}
Again, with \eqref{eq::approxvelpres} and \eqref{eq::approxsigma} we conclude the proof.
\end{proof}

\subsection{Pressure robustness}

We define the continuous Helmholtz projector $\mathbb{P}$ as the rotational part of a Helmholtz decomposition (see \cite{girault2012finite}) of a given load $\Forcevar$
\begin{align*}
  f = \nabla \theta + \xi =: \nabla \theta + \mathbb{P}(\Forcevar),
\end{align*}
with $\theta \in H^1(\om)/\rr$ and  $\xi =: \mathbb{P}(f) \in \{ \Velvartest \in H_0(\divergence, \om): \divergence(\Velvartest) = 0\}$.  Testing the second line of \eqref{eq::mixedstressstokesweak} with an arbitrary divergence free testfunction $\Velvartest \in \{ \Velvartest \in H_0(\divergence, \om): \divergence(\Velvartest) = 0\}$, we see that
\begin{align*} 
    \ip{\divergence \Stressvar, \Velvartest}_{H_0(\div, \om)} = 
    -\ip{ \mathbb{P}(\Forcevar), \Velvartest}_{H_0(\div, \om)},
\end{align*}
hence  $\Stressvar = \nu \nabla \Velvar$ is steered only by
a part of $f$, namely $\mathbb{P}(\Forcevar)$. If the right hand side
is perturbed by a gradient field $\nabla \alpha$, then $\sigma$ and
$u$ should not change as $\mathbb{P}(\Forcevar + \nabla \alpha) =
\mathbb{P}(\Forcevar)$. In the work by \cite{linke2014role} this
relation was discussed in a discrete setting. If a discrete method
fulfills this property, it is called pressure robust because one can
then deduce an $H^1$-velocity error that is independent of the
pressure. The convergence estimate of
Theorem~\ref{th::optimalconvergence} includes the scaled term
${1}/{\visc} ||\Presvar||_{H^{s}(\mesh)}$ which blows up as  
 $\nu \rightarrow 0.$ However, the mass conserving mixed stress
 formulation \eqref{eq::discrmixedstressstokesweak} is pressure
 robust, allowing us to conclude
that velocity errors do not blow up as $\visc \to 0$
by virtue of  the next theorem.

\begin{theorem}[Pressure robustness] \label{th::pressurerobust}
  Let $\Velvar \in H^1(\om, \rr^d) \cap H^m(\mesh, \rr^d)$ and let  $\Stressvar \in  H^1(\om, \rr^{d \times d}) \cap H^{m-1}(\mesh, \rr^{d \times d})$ be the exact solution of the mixed Stokes problem \eqref{eq::mixedstressstokes}. Further let $\Stressvarh$, $\Velvarh$ be the solution of the mass conserving mixed stress formulation \eqref{eq::discrmixedstressstokesweak}. For $s = \min(m-1,k)$ there holds
  \begin{align*}
    \Velnormh{ \Velvar - \Velvarh } + \frac{1}{\visc}\Sigmanormh{ \Stressvar - \Stressvarh} \lesssim h^s ||u||_{H^{s+1}(\mesh)}.
  \end{align*}  
\end{theorem}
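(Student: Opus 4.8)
The plan is to exploit exact mass conservation to eliminate the pressure from the error equations and then to use the identity $\Stressvar = \visc \nabla \Velvar$ so that the $\visc^{-1}$-weighted stress error collapses into a velocity error. \emph{First} I would record two structural facts. By~\eqref{eq::exactdivfree} the discrete velocity is genuinely divergence free, $\Velvarh \in \Velspacehdivfree$; and since the standard $H(\div)$-conforming interpolant commutes with the divergence, $\div(I_{\Velspaceh}\Velvar)$ is the $L^2$ projection of $\div(\Velvar)=0$ and hence vanishes, so $I_{\Velspaceh}\Velvar \in \Velspacehdivfree$ as well. Consequently the discrete velocity error $e_u := I_{\Velspaceh}\Velvar - \Velvarh$ lies in $\Velspacehdivfree$, exactly the subspace on which the inf-sup condition of Lemma~\ref{lem:intermediateLBB} holds.

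\emph{Second} I would derive a pressure-free Galerkin orthogonality. Choosing the test triples $(\Stressvarhtest,0,0)$ and $(0,\Velvarhtest,0)$ in the consistency identity of Theorem~\ref{th::consistency} and subtracting~\eqref{eq::discrmixedstressstokesweak}, the decisive observation is that when $\Velvarhtest$ is restricted to $\Velspacehdivfree$ the terms $\blfone(\Velvarhtest,\Presvar)$ and $\blfone(\Velvarhtest,\Presvarh)$ both vanish, because $\blfone(\Velvarhtest,\cdot)=(\div(\Velvarhtest),\cdot)=0$. The pressure therefore drops out entirely, leaving
\begin{align*}
  \ablf(\Stressvar - \Stressvarh, \Stressvarhtest) + \blftwo(\Stressvarhtest, \Velvar - \Velvarh) &= 0 && \text{for all } \Stressvarhtest \in \Stressspaceh, \\
  \blftwo(\Stressvar - \Stressvarh, \Velvarhtest) &= 0 && \text{for all } \Velvarhtest \in \Velspacehdivfree.
\end{align*}

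\emph{Third} comes the stability. The form $\ablf$ is coercive on all of $\Stressspaceh$ since $\ablf(\Stressvarhtest,\Stressvarhtest)=\visc^{-1}\Sigmanormh{\Stressvarhtest}^2$, and $\blftwo$ is inf-sup stable on $\Velspacehdivfree$ by Lemma~\ref{lem:intermediateLBB}; together with continuity (Lemma~\ref{th:continuity}), Brezzi's theorem yields inf-sup stability of the \emph{reduced} saddle point on $\Stressspaceh \times \Velspacehdivfree$, measured in $\sqrt{\visc}\,\Velnormh{\cdot} + \visc^{-1/2}\Sigmanormh{\cdot}$, with constants independent of $h$ and $\visc$. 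Writing $e_\sigma := I_{\Stressspaceh}\Stressvar - \Stressvarh$ and splitting the errors above into interpolation error plus $e_u, e_\sigma$, this bounds $\sqrt{\visc}\,\Velnormh{e_u} + \visc^{-1/2}\Sigmanormh{e_\sigma}$ by the three interpolation-error functionals $\ablf(\Stressvar - I_{\Stressspaceh}\Stressvar,\cdot)$, $\blftwo(\cdot,\Velvar - I_{\Velspaceh}\Velvar)$ and $\blftwo(\Stressvar - I_{\Stressspaceh}\Stressvar,\cdot)$.

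\emph{Finally} the scaling. Using~\eqref{eq::blftwoequitwo} and Cauchy--Schwarz, the two stress functionals contribute $\visc^{-1/2}\big(\Sigmanormh{\Stressvar - I_{\Stressspaceh}\Stressvar} + (\sum_{F\in\facets} h\,\|(\Stressvar - I_{\Stressspaceh}\Stressvar)_{\normal\tangential}\|_F^2)^{1/2}\big)$, which~\eqref{eq::approxsigma} bounds by $\visc^{-1/2} h^s \|\Stressvar\|_{H^s(\mesh)}$, while the velocity functional contributes $\sqrt{\visc}\,\Velnormh{\Velvar - I_{\Velspaceh}\Velvar} \lesssim \sqrt{\visc}\,h^s\|\Velvar\|_{H^{s+1}(\mesh)}$ by~\eqref{eq::approxvelpres}; note no pressure-interpolation error appears, since $\blfone$ was removed on $\Velspacehdivfree$. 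Invoking $\Stressvar = \visc\nabla\Velvar$, so $\|\Stressvar\|_{H^s(\mesh)} = \visc\|\nabla\Velvar\|_{H^s(\mesh)} \lesssim \visc\|\Velvar\|_{H^{s+1}(\mesh)}$, turns the scaled stress term into $\sqrt{\visc}\,h^s\|\Velvar\|_{H^{s+1}(\mesh)}$ as well. Dividing the bound by $\sqrt{\visc}$ gives $\Velnormh{e_u} + \visc^{-1}\Sigmanormh{e_\sigma} \lesssim h^s\|\Velvar\|_{H^{s+1}(\mesh)}$, and a triangle inequality against the interpolation errors — again using $\Stressvar=\visc\nabla\Velvar$ to cancel the $\visc^{-1}$ in front of $\Sigmanormh{\Stressvar - I_{\Stressspaceh}\Stressvar}$ — completes the proof. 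The main obstacle, and precisely the source of pressure robustness, is this last step's bookkeeping: one must check that restricting to $\Velspacehdivfree$ genuinely annihilates every pressure term, that $e_u \in \Velspacehdivfree$ so Lemma~\ref{lem:intermediateLBB} is applicable, and that the $\visc^{-1}$-weighted stress-interpolation error is \emph{exactly} absorbed into a velocity error through $\Stressvar = \visc\nabla\Velvar$, so that neither a $\visc^{-1}\Presnormh{\Presvar}$ term nor a stand-alone stress term can survive.
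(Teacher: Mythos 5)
Your proposal is correct and follows essentially the same route as the paper: restrict the error equation to the divergence-free subspace $\Velspacehdivfree$ (so every $\blfone$-term, and hence the pressure, disappears), combine the coercivity of $\ablf$ on all of $\Stressspaceh$ with the inf-sup condition of Lemma~\ref{lem:intermediateLBB} on $\Velspacehdivfree$ to get stability of the reduced problem in the $\visc$-weighted product norm, and finally use $\Stressvar = \visc\nabla\Velvar$ to absorb the $\visc^{-1}$-scaled stress interpolation error into $h^s\|\Velvar\|_{H^{s+1}(\mesh)}$. Your explicit verification that $I_{\Velspaceh}\Velvar - \Velvarh \in \Velspacehdivfree$ (via the commuting property of the $H(\div)$ interpolant and \eqref{eq::exactdivfree}) is a detail the paper leaves implicit, but the argument is otherwise the same.
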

\begin{proof}
  The proof follows along the lines of the proof of Theorem~\ref{th::optimalconvergence}. Using the triangle inequality,
  \begin{align*}
    \Velnormh{ \Velvar - \Velvarh } +\frac{1}{\visc} \Sigmanormh{ \Stressvar - \Stressvarh} 
                                                                                            \lesssim    \Velnormh{ \Velvar - I_{\Velspaceh}\Velvar} + \frac{1}{\visc}\Sigmanormh{ \Stressvar - I_{\Stressspaceh}\Stressvar}  
    +   \Velnormh{ I_{\Velspaceh}\Velvar - \Velvarh } + \frac{1}{\visc}\Sigmanormh{ I_{\Stressspaceh}\Stressvar - \Stressvarh}.
  \end{align*}
  The first two terms can be estimated using the approximation
  results \eqref{eq::approxvelpres} and \eqref{eq::approxsigma}. Next
  note that from the LBB condition of Lemma~\ref{lem:intermediateLBB}
  on $V_h^0$ and the trivial coercivity inequality
  $ \ablf(\Stressvarh,\Stressvarh) \geq (1/\visc) \Sigmanormh{
    \Stressvarh }^2$
  for all $\Stressvarh \in \Stressspaceh$, we conclude
  inf-sup stability of the
  bilinear form
  $\blfbig(\Velvarh,\Stressvarh, 0;\Velvarhtest,\Stressvarhtest, 0) $
  with respect to the product space norm $||(\cdot,\cdot,0) ||_*$ on
  the subspace $\Velspacehdivfree \times \Stressspaceh \times \{0\}$,
    i.e.,
\begin{align*}
  \Velnormh{ I_{\Velspaceh}\Velvar - \Velvarh } + \frac{1}{\visc}\Sigmanormh{ I_{\Stressspaceh}\Stressvar - \Stressvarh} &= \frac{1}{\sqrt{\visc}}  ||(I_{\Velspaceh}\Velvar - \Velvarh, I_{\Stressspaceh}\Stressvar - \Stressvarh,0) ||_* \\
  &\le \sup\limits_{(\Velvarhtest, \Stressvarhtest) \in \Velspacehdivfree \times \Stressspaceh} \frac{ \blfbig(I_{\Velspaceh}\Velvar - \Velvarh,I_{\Stressspaceh}\Stressvar - \Stressvarh, 0;\Velvarhtest,\Stressvarhtest, 0)}{\sqrt{\visc}||(\Velvarhtest,\Stressvarhtest, 0) ||_*}.
\end{align*}
Note that the form  is continuous by Lemma~\ref{th:continuity}.
By steps similar to those in the proof of the consistency result of Theorem \ref{th::consistency} we have 
\begin{align*}
\blfbig(\Velvar,\Stressvar, 0;\Velvarhtest,\Stressvarhtest, 0) = \int_\om \divergence(\Stressvar) \cdot \Velvarhtest = \int_\om -f \cdot \Velvarhtest + \int_\om \nabla \Presvar \cdot \Velvarhtest = \int_\om -f \cdot \Velvarhtest 
\end{align*}
for all $ \Velvarhtest,\Stressvarhtest \in \Velspacehdivfree \times \Stressspaceh$,
where we used   $ \divergence(\Stressvar) = -f + \nabla \Presvar$ and
integration by parts for $\nabla \Presvar$. This shows that the method
is also consistent on the subspace of divergence-free velocity test
functions, a key ingredient to obtain pressure robustness. We now have
\begin{align*}
\sup\limits_{(\Velvarhtest, \Stressvarhtest) \in \Velspacehdivfree \times \Stressspaceh} \frac{ \blfbig(I_{\Velspaceh}\Velvar - \Velvarh,I_{\Stressspaceh}\Stressvar - \Stressvarh, 0;\Velvarhtest,\Stressvarhtest, 0)}{\sqrt{\visc}||(\Velvarhtest,\Stressvarhtest, \Presvarhtest) ||_*} = \sup\limits_{(\Velvarhtest, \Stressvarhtest) \in \Velspacehdivfree \times \Stressspaceh} \frac{ \blfbig(I_{\Velspaceh}\Velvar - \Velvar,I_{\Stressspaceh}\Stressvar - \Stressvar, 0;\Velvarhtest,\Stressvarhtest, 0)}{\sqrt{\visc}||(\Velvarhtest,\Stressvarhtest, \Presvarhtest) ||_*}.
\end{align*}
The rest of the proof follows along the previous lines using the
identity $\Stressvar = \visc \nabla \Velvar$ and we obtain 
\begin{align*}
\Velnormh{ \Velvar - \Velvarh } + \frac{1}{\visc}\Sigmanormh{ \Stressvar - \Stressvarh} \lesssim h^s (||u||_{H^{s+1}(\mesh)} + \frac{1}{\visc} ||\Stressvar||_{H^{s}(\mesh)}) \le   h^s ||u||_{H^{s+1}(\mesh)}.
\end{align*}
\end{proof}

\section{Numerical examples} \label{sec:numerics}

In the following we present a numerical example to validate the results of Section \ref{sec::apriorianalysis}. All numerical examples were implemented within the finite element library NGSolve/Netgen, see \cite{netgen, ngsolve}. Let $\om = [0,1]^d$ and choose the right hand side $f = -\div(\Stressvar) + \nabla \Presvar$ with the exact solution given by
\begin{align*}
  \Stressvar &= \visc\nabla\curl(\psi_2),\quad \textrm{and} \quad  \Presvar := x^5 + y^5 - \frac{1}{3} \quad \textrm{for } d=2\\
  \Stressvar &= \visc\nabla\curl(\psi_3,\psi_3,\psi_3), \quad \textrm{and} \quad  \Presvar := x^5 + y^5 +z^5 - \frac{1}{2} \quad \textrm{for } d=3,
\end{align*}
where $\psi_2 :=x^2(x-1)^2y^2(y-1)^2$ and $\psi_3 :=
x^2(x-1)^2y^2(y-1)^2z^2(z-1)^2$ defines velocity through a vector and
scalar potential in  two and three dimensions respectively. 
In Figure~\ref{fig:convergenceplot} different errors are plotted for varying polynomial orders $k = 2,3,4,5$ in the two dimensional case with a fixed viscosity $\nu = 10^{-3}$. As predicted by Theorem~\ref{th::optimalconvergence}, the $H^1$-seminorm error of the velocity, the $L^2$-norm error of the stress and the $L^2$-norm error of the pressure have the same optimal convergence rate.

\begin{figure}[h]
  \begin{center}
    \pgfplotstableread{convergence/convergence_2_2d.out} \convtwo
\pgfplotstableread{convergence/convergence_3_2d.out} \convthree
\pgfplotstableread{convergence/convergence_4_2d.out} \convfour
\pgfplotstableread{convergence/convergence_5_2d.out} \convfive

\definecolor{myblue}{RGB}{62,146,255}
\definecolor{mygreen}{RGB}{22,135,118}
\definecolor{myred}{RGB}{255,145,0}

\begin{tikzpicture}
  [  
  scale=1
  ]
  \begin{axis}[
    name=plot1,
    scale=0.8,
    title = $|| \nabla \Velvar - \nabla \Velvarh ||_0$,
    legend style={text height=0.7em },
    legend style={draw=none},
    style={column sep=0.1cm},
    xlabel=$|\mesh|$,
    x label style={at={(0.85,0.04)},anchor=west},
    xmode=log,
    ymode=log,
    ytick = {1e-10,1e-8,1e-6,1e-4,1e-2,1e-0},
    y tick label style={
      /pgf/number format/.cd,
      fixed,
      precision=2
    },
    xtick = {1e1,1e2,1e3,1e4},
    x tick label style={
      /pgf/number format/.cd,
      fixed,
      precision=2
    },
    grid=both,
    legend style={
      cells={align=left},
      anchor = north west
    },
    ]
    \addplot[line width=0.5pt, color=black] table[x=0,y expr={1/(sqrt{\thisrow{0}}*sqrt{\thisrow{0}})}]{\convtwo};
    \addplot[line width=0.5pt, densely dotted, color=black] table[x=0,y expr={1/(sqrt{\thisrow{0}}*sqrt{\thisrow{0}}*sqrt{\thisrow{0}})}]{\convtwo};
    \addplot[line width=0.5pt, dashed, color=black] table[x=0,y expr={1/(sqrt{\thisrow{0}}*sqrt{\thisrow{0}}*sqrt{\thisrow{0}}*sqrt{\thisrow{0}})}]{\convtwo};
    \addplot[line width=0.5pt, dashdotted, color=black] table[x=0,y expr={1/(sqrt{\thisrow{0}}*sqrt{\thisrow{0}}*sqrt{\thisrow{0}}*sqrt{\thisrow{0}}*sqrt{\thisrow{0}})}]{\convtwo};
    
    \addplot[line width=0.5pt, color=magenta, mark=*] table[x=0, y=1]{\convtwo};
    \addplot[line width=0.5pt, color=myblue, mark=diamond*] table[x=0, y=1]{\convthree};    
    \addplot[line width=0.5pt, color=myred, mark=triangle*] table[x=0, y=1]{\convfour};   
    \addplot[line width=0.5pt, color=mygreen, mark=square*] table[x=0, y=1]{\convfive};
  \end{axis}
   \begin{axis}[
     name=plot2,
     at=(plot1.right of south east),
     anchor=left of south west,
     title = $|| \Stressvar - \Stressvarh ||_0$,
    scale=0.8,
    legend style={text height=0.7em },
    legend style={draw=none},
    style={column sep=0.15cm},
    xlabel=$|\mesh|$,
    x label style={at={(0.85,0.04)},anchor=west},
    xmode=log,
    ymode=log,
    ytick = {1e-10,1e-8,1e-6,1e-4,1e-2,1e-0},
    y tick label style={
      /pgf/number format/.cd,
      fixed,
      precision=2
    },
    xtick = {1e1,1e2,1e3,1e4},
    x tick label style={
      /pgf/number format/.cd,
      fixed,
      precision=2
    },    
    grid=both,
    legend style={
      cells={align=left},
      at={(1.05,1)},
      anchor = north west
    },
    ]
    \addplot[line width=0.5pt, color=black] table[x=0,y expr={1/(4*sqrt{\thisrow{0}}*sqrt{\thisrow{0}})}]{\convtwo};
    \addplot[line width=0.5pt, densely dotted, color=black] table[x=0,y expr={1/(4*sqrt{\thisrow{0}}*sqrt{\thisrow{0}}*sqrt{\thisrow{0}})}]{\convtwo};
    \addplot[line width=0.5pt, dashed, color=black] table[x=0,y expr={1/(10*sqrt{\thisrow{0}}*sqrt{\thisrow{0}}*sqrt{\thisrow{0}}*sqrt{\thisrow{0}})}]{\convtwo};
    \addplot[line width=0.5pt, dashdotted, color=black] table[x=0,y expr={1/(10*sqrt{\thisrow{0}}*sqrt{\thisrow{0}}*sqrt{\thisrow{0}}*sqrt{\thisrow{0}}*sqrt{\thisrow{0}})}]{\convtwo};
    
    \addplot[line width=0.5pt, color=magenta, mark=*] table[x=0, y=4]{\convtwo};   
    \addplot[line width=0.5pt, color=myblue, mark=diamond*] table[x=0, y=4]{\convthree};   
    \addplot[line width=0.5pt, color=myred, mark=triangle*] table[x=0, y=4]{\convfour};    
    \addplot[line width=0.5pt, color=mygreen, mark=square*] table[x=0, y=4]{\convfive};

  \end{axis}
\end{tikzpicture}
\begin{tikzpicture}
  [  
  scale=1
  ]
  \begin{axis}[
    name=plot1,
    scale=0.8,
    title = $|| \Presvar - \Presvarh ||_0$,
    legend style={text height=0.7em },
    legend style={draw=none},
    style={column sep=0.1cm},
    xlabel=$|\mesh|$,
    x label style={at={(0.85,0.04)},anchor=west},
    xmode=log,
    ymode=log,
    ytick = {1e-10,1e-8,1e-6,1e-4,1e-2,1e-0},
    y tick label style={
      /pgf/number format/.cd,
      fixed,
      precision=2
    },
    xtick = {1e1,1e2,1e3,1e4},
    x tick label style={
      /pgf/number format/.cd,
      fixed,
      precision=2
    },
    grid=both,
    legend style={
      cells={align=left},
      anchor = north west
    },
    ]
    \addplot[line width=0.5pt, color=black] table[x=0,y expr={3/(sqrt{\thisrow{0}}*sqrt{\thisrow{0}})}]{\convtwo};
    \addplot[line width=0.5pt, densely dotted, color=black] table[x=0,y expr={1/(sqrt{\thisrow{0}}*sqrt{\thisrow{0}}*sqrt{\thisrow{0}})}]{\convtwo};
    \addplot[line width=0.5pt, dashed, color=black] table[x=0,y expr={1/(4*sqrt{\thisrow{0}}*sqrt{\thisrow{0}}*sqrt{\thisrow{0}}*sqrt{\thisrow{0}})}]{\convtwo};
    \addplot[line width=0.5pt, dashdotted, color=black] table[x=0,y expr={1/(5*sqrt{\thisrow{0}}*sqrt{\thisrow{0}}*sqrt{\thisrow{0}}*sqrt{\thisrow{0}}*sqrt{\thisrow{0}})}]{\convtwo};
    
    \addplot[line width=0.5pt, color=magenta, mark=*] table[x=0, y=3]{\convtwo};
    \addplot[line width=0.5pt, color=myblue, mark=diamond*] table[x=0, y=3]{\convthree};    
    \addplot[line width=0.5pt, color=myred, mark=triangle*] table[x=0, y=3]{\convfour};   
    \addplot[line width=0.5pt, color=mygreen, mark=square*] table[x=0, y=3]{\convfive};
  \end{axis}
   \begin{axis}[
     name=plot2,
     at=(plot1.right of south east),
     anchor=left of south west,
     title = $|| \Velvar - \Velvarh ||_0$,     
    scale=0.8,
    legend style={text height=0.7em },
    legend style={draw=none},
    style={column sep=0.15cm},
    xlabel=$|\mesh|$,
    x label style={at={(0.85,0.04)},anchor=west},
    xmode=log,
    ymode=log,
    ytick = {1e-12,1e-10,1e-8,1e-6,1e-4,1e-2,1e-0},
    y tick label style={
      /pgf/number format/.cd,
      fixed,
      precision=2
    },
    xtick = {1e1,1e2,1e3,1e4},
    x tick label style={
      /pgf/number format/.cd,
      fixed,
      precision=2
    },    
    grid=both,
    legend style={
      cells={align=left},
      at={(1.05,1)},
      anchor = north west
    },
    ]
    
    \addplot[line width=0.5pt, color=black] table[x=0,y expr={1/(4*sqrt{\thisrow{0}}*sqrt{\thisrow{0}})}]{\convtwo};
    \addplot[line width=0.5pt, densely dotted, color=black] table[x=0,y expr={1/(4*sqrt{\thisrow{0}}*sqrt{\thisrow{0}}*sqrt{\thisrow{0}})}]{\convtwo};
    \addplot[line width=0.5pt, dashed, color=black] table[x=0,y expr={1/(10*sqrt{\thisrow{0}}*sqrt{\thisrow{0}}*sqrt{\thisrow{0}}*sqrt{\thisrow{0}})}]{\convtwo};
    \addplot[line width=0.5pt, dashdotted, color=black] table[x=0,y expr={1/(10*sqrt{\thisrow{0}}*sqrt{\thisrow{0}}*sqrt{\thisrow{0}}*sqrt{\thisrow{0}}*sqrt{\thisrow{0}})}]{\convtwo};
    \addplot[line width=0.5pt, loosely dotted, color=black] table[x=0,y expr={1/(10*sqrt{\thisrow{0}}*sqrt{\thisrow{0}}*sqrt{\thisrow{0}}*sqrt{\thisrow{0}}*sqrt{\thisrow{0}}*sqrt{\thisrow{0}})}]{\convtwo};
    
    \addplot[line width=0.5pt, color=magenta, mark=*] table[x=0, y=2]{\convtwo};   
    \addplot[line width=0.5pt, color=myblue, mark=diamond*] table[x=0, y=2]{\convthree};   
    \addplot[line width=0.5pt, color=myred, mark=triangle*] table[x=0, y=2]{\convfour};    
    \addplot[line width=0.5pt, color=mygreen, mark=square*] table[x=0, y=2]{\convfive};

  \end{axis}
\end{tikzpicture}
\begin{tikzpicture}
  \begin{customlegend}[hide axis,
    xmin=10,
    xmax=50,
    ymin=0,
    ymax=0.4,
    legend columns=9,
    legend style={draw=none},
    legend entries={$h^2$,$h^3$,$h^4$,$h^5$,$h^6$,$k=2$,$k=3$,$k=4$,$k=5$}]
   
    \addlegendimage{black}
    \addlegendimage{black,densely dotted}
    \addlegendimage{black,dashed}
    \addlegendimage{black,dashdotted}
    \addlegendimage{black,loosely dotted}
    
    \addlegendimage{magenta,mark=*}
    \addlegendimage{myblue,mark=diamond*}
    \addlegendimage{myred,mark=triangle*}
    \addlegendimage{mygreen,mark=square*}
  \end{customlegend}
\end{tikzpicture}
  \end{center}
  \vspace{-0.7cm}
  \caption{Convergence plots for the two dimensional case with a fixed viscosity $\nu = 10^{-3}$.}
\label{fig:convergenceplot}
\end{figure}
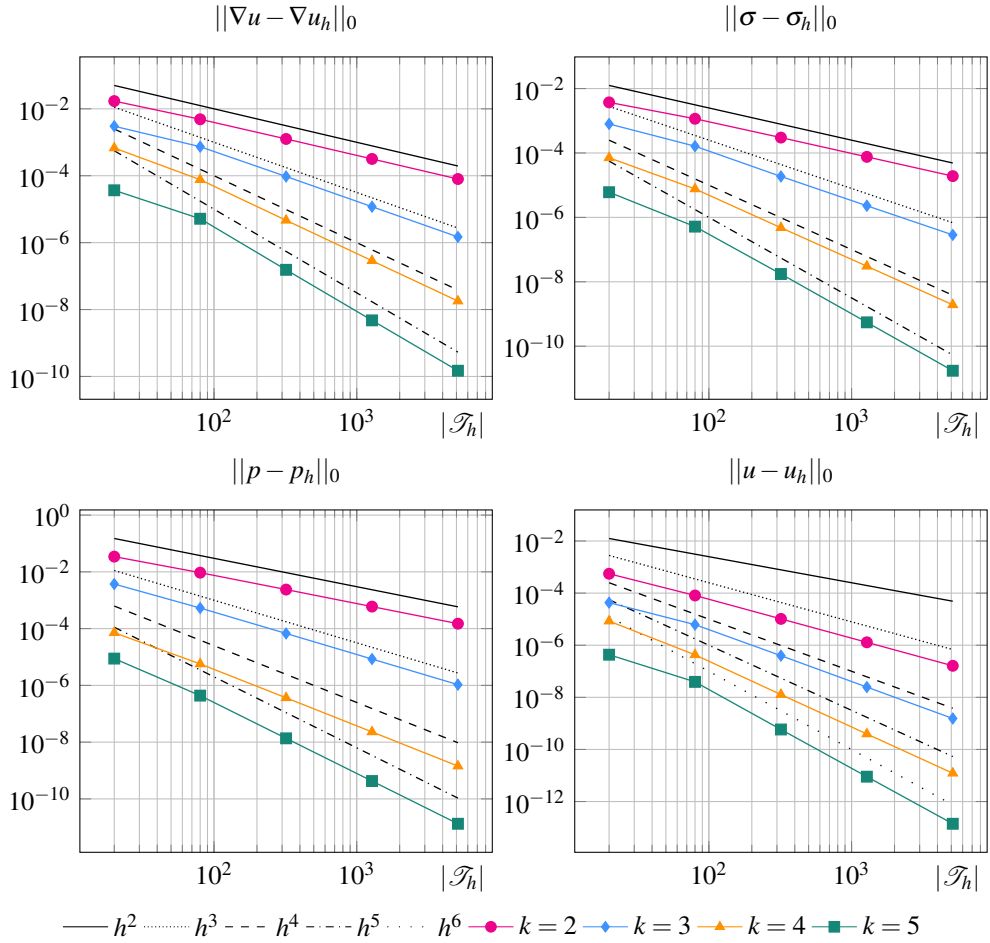

The $L^2$-norm of the velocity error converges at one higher order as
shown in  the bottom right plot of Figure~\ref{fig:convergenceplot}.
This can be explained by the standard Aubin-Nitsche duality argument,
by which we can prove
\begin{align*}
  || \Velvar - \Velvarh ||_{L^2(\om)} \le h^{k+1} || u ||_{H^{k+1}(\mesh)}
\end{align*}
whenever  the problem admits full elliptic regularity and the exact solution
$u$ is smoother.  This argument works in both two and three dimensions. The higher observed rate of convergence in three dimensions (for
$\visc = 10^{-3}$), given by the estimated order of convergence (eoc), can be seen in Table \ref{threedexample}.


Next, we study pressure robustness. The above-mentioned right hand
side $f$ consists of an irrotational part (the gradient of the
pressure) and a part with curl. We study how the velocity error (in
$H^1$ seminorm) varies as $\visc \to 0$ for the presented MCS method
and the standard Taylor-Hood method -- see e.g., \cite{brezzi:falk}
and \cite{girault2012finite} -- using the same polynomial approximation
order for the velocity in the two dimensional case.  We observe in
Figure~\ref{fig:probustplot} that the error of the Taylor-Hood method
increases as $\visc \to 0$ and behaves as if it were scaled by a
factor $1/\nu$ for small values of $\nu$. This is the locking
phenomenon we discussed earlier: clearly the Taylor-Hood method is not
pressure robust (and does not provide exactly divergence-free
numerical velocity).  In contrast, the velocity errors in the MCS
method (also in Figure~\ref{fig:probustplot}) appear to be not
influenced by varying values of $\visc$. This behaviour is observed for
several polynomial orders $k=2,3,4$.  These observations match the
predictions of Theorem~\ref{th::pressurerobust}.

\begin{table}[h]
\begin{center}
\footnotesize
\begin{tabular}{@{~}c@{~}|@{~~~}c@{~~~(}c@{)~~~}c@{~~~(}c@{)~~~}c@{~~~(}c@{)~~~}c@{~~~(}c@{)}}
      \toprule
  $|\mathcal{T}|$ & $|| \nabla \Velvar - \nabla \Velvarh||_0$ & \footnotesize eoc &$|| \Stressvar - \Stressvarh||_0$ & \footnotesize eoc &$|| \Presvar - \Presvarh||_0$ & \footnotesize eoc &$|| \Velvar - \Velvarh||_0$ & \footnotesize eoc  \\
\midrule
    \multicolumn{9}{c}{$k=1$}\\
28& \num{0.004571830455453789}&--& \num{0.003505660174751253}&--& \num{0.2445663237697641}&--& \num{0.0004303199553831272}&--\\
224& \num{0.0038957718956525856}&\numeoc{0.23086269522028346}& \num{0.0026998956026624806}&\numeoc{0.37678253014048}& \num{0.1667671971580337}&\numeoc{0.5523902208214316}& \num{0.00026052069950875895}&\numeoc{0.7240117389697258}\\
1792& \num{0.0022610459009523225}&\numeoc{0.7849189273483329}& \num{0.001252560538828451}&\numeoc{1.108023289886233}& \num{0.08940907149326324}&\numeoc{0.8993424202973235}& \num{7.572024553946509e-05}&\numeoc{1.7826470110711856}\\
14336& \num{0.001128456944331142}&\numeoc{1.0026389058687597}& \num{0.0006259234563641155}&\numeoc{1.000822186392283}& \num{0.04605099642700857}&\numeoc{0.9571888425386336}& \num{1.945844426826144e-05}&\numeoc{1.9602827202012476}\\
114688& \num{0.0005637272276863808}&\numeoc{1.0012822187468833}& \num{0.00031183189852873497}&\numeoc{1.0052177274060965}& \num{0.023221776996557177}&\numeoc{0.9877539973596987}& \num{4.8947644110162155e-06}&\numeoc{1.9910850405729392}\\
    \midrule
    \multicolumn{9}{c}{$k=2$}\\
28& \num{0.002809695156178579}&--& \num{0.0018675901590948102}&--& \num{0.0745049307161993}&--& \num{0.00013681043155242657}&--\\   
224& \num{0.0014806755334779444}&\numeoc{0.9241580791255488}& \num{0.00046162629731745036}&\numeoc{2.016380574726804}& \num{0.031059445732618422}&\numeoc{1.2623038213781705}& \num{3.4430807013592997e-05}&\numeoc{1.990406336029456}\\                                  
1792& \num{0.0005761812245964354}&\numeoc{1.3616609767497394}& \num{0.0001763508031182975}&\numeoc{1.3882772659215066}& \num{0.009518865088896598}&\numeoc{1.706170604641592}& \num{8.188122822394921e-06}&\numeoc{2.0720953480510516}\\                                  
14336& \num{0.00017009293038200445}&\numeoc{1.7601994705705861}& \num{4.8506169024657504e-05}&\numeoc{1.8622079996131087}& \num{0.0025334178625218114}&\numeoc{1.9097045196430396}& \num{1.2529196440918618e-06}&\numeoc{2.708238852558938}\\                             
114688& \num{4.3844935482378406e-05}&\numeoc{1.9558410670670516}& \num{1.2504180011133977e-05}&\numeoc{1.955757788475428}& \num{0.0006437098670247017}&\numeoc{1.976602566830205}& \num{1.6230950603503734e-07}&\numeoc{2.9484744882125358}\\ 
    \midrule
    \multicolumn{9}{c}{$k=3$}\\
28& \num{0.0010039158371449477}&--& \num{0.0002865551141187956}&--& \num{0.006749483419566198}&--& \num{2.549820683872712e-05}&--\\ 
224& \num{0.00047546929046932146}&\numeoc{1.0782142579736644}& \num{9.166622863944649e-05}&\numeoc{1.6443504200971415}& \num{0.001550192898021939}&\numeoc{2.1223293396479233}& \num{6.290461705997669e-06}&\numeoc{2.019157976396193}\\                                  
1792& \num{0.00014758939976579767}&\numeoc{1.6877630563418733}& \num{1.7027566063140106e-05}&\numeoc{2.428518089378506}& \num{0.0002619025070816613}&\numeoc{2.565345973036723}& \num{1.0444193618927235e-06}&\numeoc{2.590464803311981}\\                                
14336& \num{2.0080795256093505e-05}&\numeoc{2.8777007968851795}& \num{2.385215567393521e-06}&\numeoc{2.835680666581789}& \num{3.53049538620429e-05}&\numeoc{2.8910873335564626}& \num{7.194022880602107e-08}&\numeoc{3.8597585501089506}\\                                
114688& \num{2.6272027065269014e-06}&\numeoc{2.934216981613506}& \num{3.1222838613630545e-07}&\numeoc{2.933446047312089}& \num{4.4961962451033915e-06}&\numeoc{2.973093720157347}& \num{4.706348523001752e-09}&\numeoc{3.9341186812908218}\\ 
\bottomrule
\end{tabular}
\caption{The $H^1$-seminorm error of the velocity, the $L^2$-norm error of the pressure and the stress and the $L^2$-norm error of the velocity for different polynomial orders $k=1,2,3$ for the three dimensional case and a fixed viscosity $\nu = 10^{-3}$} \label{threedexample}
\end{center}
\end{table}

We conclude with a few remarks on the cost of solving the discrete
system~\eqref{eq::discrmixedstressstokesweak}. After an element wise
static condensation step there are two different types of degrees of
freedom (dofs) that couple at element interfaces. These coupling dofs
determine the costs for the factorization step of the assembled
matrix.  In the $d=2$ case, the normal continuity of the
$H(\divergence)$-conforming velocity space demands $k+1$ dofs per
interface, while the normal-tangential continuity of the stress space
$\Stressspaceh$ requires $k$ dofs, i.e., we have $2k+1$ dofs per
interface.  This is comparable to the number of interface degrees of
freedom for standard methods.  In fact, it is identical to the number
of dofs per interface of an advanced method (with a reduced
stabilization called ``projected jumps'') presented in the recent work
of \cite{LS_CMAME_2016}. Similar cost comparison 
observations apply for the $d=3$ case.

\begin{figure}[h]
  \begin{center}
    \pgfplotstableread{probust/probust_output_2.out} \prob
\pgfplotstableread{probust/probust_output_TH_2.out} \probTH
\pgfplotstableread{probust/probust_output_3.out} \probthree
\pgfplotstableread{probust/probust_output_TH_3.out} \probTHthree
\pgfplotstableread{probust/probust_output_4.out} \probfour
\pgfplotstableread{probust/probust_output_TH_4.out} \probTHfour

\definecolor{myblue}{RGB}{62,146,255}
\definecolor{mygreen}{RGB}{22,135,118}
\definecolor{myred}{RGB}{255,145,0}

\begin{tikzpicture}
  [  
  scale=1
  ]
  \begin{axis}[
    name=plot1,
    scale=0.8,
    title = $|| \nabla \Velvar - \nabla \Velvarh ||_0$,
    legend entries={MCS, TH, $k=2$,$k=3$,$k=4$},
    legend style={text height=0.7em },
    legend style={draw=none},
    style={column sep=0.1cm},
    xlabel=$\nu$,
    x label style={at={(0.95,0.05)},anchor=west},
    xmode=log,
    ymode=log,
    ytick = {1e-6,1e-4,1e-2,1e-0,1e2,1e4},
    y tick label style={
      /pgf/number format/.cd,
      fixed,
      precision=2
    },
    xtick = {1e-10,1e-8,1e-6,1e-4,1e-2,1e-0,1e2},
    x tick label style={
      /pgf/number format/.cd,
      fixed,
      precision=2
    },
    grid=both,
    legend style={
      cells={align=left},
      at={(1.05,0.8)},
      anchor = north west
    },
    ]

    \addlegendimage{mark=*}
    \addlegendimage{mark=diamond*}

    \addlegendimage{magenta}
    \addlegendimage{myblue}
    \addlegendimage{myred}
    
    \addplot[line width=0.5pt, color=magenta, mark=*] table[x=0, y=1]{\prob};    
    \addplot[line width=0.5pt, color=magenta, mark=diamond*] table[x=0, y=1]{\probTH};

    \addplot[line width=0.5pt, color=myred, mark=*] table[x=0, y=1]{\probfour};
    \addplot[line width=0.5pt, color=myred, mark=diamond*] table[x=0, y=1]{\probTHfour};

    \addplot[line width=0.5pt, color=myblue, mark=*] table[x=0, y=1]{\probthree};
    \addplot[line width=0.5pt, color=myblue, mark=diamond*] table[x=0, y=1]{\probTHthree};
    
  \end{axis}
\end{tikzpicture}
  \end{center}
  \vspace{-0.7cm}
  \caption{The $H^1$-seminorm error for the MCS method and a Taylor-Hood approximation for $k=2,3,4$ and varying viscosity $\nu$.}
\label{fig:probustplot}
\end{figure}
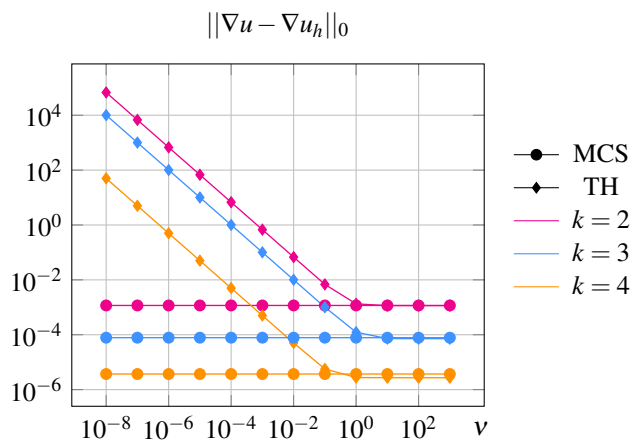

\section*{Acknowledgements}

 Philip L. Lederer has been funded by the Austrian Sicence Fund (FWF) through the research programm ``Taming complexity in partial differential systems'' (F65) - project ``Automated discretization in multiphysics'' (P10).
Part of this work was completed while 
two of the authors were at ``BCAM -- Basque Center for
Applied Mathematics.'' We gratefully acknowledge the hospitality of BCAM
and its funding through ``MINECO: BCAM Severo Ochoa excellence
accreditation  SEV-2013-0323.''

\bibliographystyle{mystyle-BIB}
\bibliography{literature}

\begin{thebibliography}{}

\bibitem[Abramowitz(1974)Abramowitz]{Abramowitz}
{\sc Abramowitz, M.} (1974)
\newblock {\em Handbook of Mathematical Functions, With Formulas, Graphs, and
  Mathematical Tables\/}.
\newblock Dover Publications, Incorporated.

\bibitem[Andrews {\em et~al.}(1999)Andrews, Askey, \& Roy]{andrews}
{\sc Andrews, G., Askey, R. \& Roy, R.} (1999)
\newblock {\em Special Functions\/}.
\newblock Encyclopedia of Mathematics and its Applications.
\newblock Cambridge University Press.

\bibitem[Boffi {\em et~al.}(2013)Boffi, Brezzi, \& Fortin]{brezzi2012mixed}
{\sc Boffi, D., Brezzi, F. \& Fortin, M.} (2013)
\newblock {\em Mixed Finite Element Methods and Applications\/}.
\newblock Springer Science \& Business Media.

\bibitem[Braess(2013)Braess]{braess}
{\sc Braess, D.} (2013)
\newblock {\em Finite Elemente - Theorie, schnelle L{\"o}ser und Anwendungen in
  der Elastizit{\"a}tstheorie.}
\newblock Springer.

\bibitem[Brennecke {\em et~al.}(2015)Brennecke, Linke, Merdon, \&
  Sch{\"o}berl]{blms:2015}
{\sc Brennecke, C., Linke, A., Merdon, C. \& Sch{\"o}berl, J.} (2015)
\newblock Optimal and pressure-independent {$L^2$} velocity error estimates for
  a modified {C}rouzeix-{R}aviart {S}tokes element with {BDM} reconstructions.
\newblock {\em J. Comput. Math.}, {\bf 33}, 191--208.

\bibitem[Brezzi {\em et~al.}(1985)Brezzi, Douglas~Jr., \&
  Marini]{brezzi1985two}
{\sc Brezzi, F., Douglas~Jr., J. \& Marini, L.~D.} (1985)
\newblock Two families of mixed finite elements for second order elliptic
  problems.
\newblock {\em Numerische Mathematik\/}, {\bf 47}, 217--235.

\bibitem[Ciarlet(2002)Ciarlet]{ciarlet2002finite}
{\sc Ciarlet, P.} (2002)
\newblock {\em The Finite Element Method for Elliptic Problems\/}.
\newblock Classics in Applied Mathematics.
\newblock Society for Industrial and Applied Mathematics.

\bibitem[Cockburn {\em et~al.}(2005)Cockburn, Kanschat, \&
  Sch{\"o}tzau]{cockburn2005locally}
{\sc Cockburn, B., Kanschat, G. \& Sch{\"o}tzau, D.} (2005)
\newblock A locally conservative {LDG} method for the incompressible
  {Navier}-{Stokes} equations.
\newblock {\em Mathematics of Computation\/}, {\bf 74}, 1067--1095.

\bibitem[Cockburn {\em et~al.}(2007)Cockburn, Kanschat, \&
  Sch{\"o}tzau]{cockburn2007note}
{\sc Cockburn, B., Kanschat, G. \& Sch{\"o}tzau, D.} (2007)
\newblock A note on discontinuous {Galerkin} divergence-free solutions of the
  {Navier}--{Stokes} equations.
\newblock {\em Journal of Scientific Computing\/}, {\bf 31}, 61--73.

\bibitem[Demlow \& Hirani(2014)Demlow \& Hirani]{DemloHiran14}
{\sc Demlow, A. \& Hirani, A.~N.} (2014)
\newblock A posteriori error estimates for finite element exterior calculus:
  the de {R}ham complex.
\newblock {\em Found. Comput. Math.}, {\bf 14}, 1337--1371.

\bibitem[Dubiner(1991)Dubiner]{dubiner1991spectral}
{\sc Dubiner, M.} (1991)
\newblock Spectral methods on triangles and other domains.
\newblock {\em Journal of Scientific Computing\/}, {\bf 6}, 345--390.

\bibitem[Ern \& Guermond(2004)Ern \& Guermond]{ernguermondfiniteelements}
{\sc Ern, A. \& Guermond, J.-L.} (2004)
\newblock {\em Theory and Practice of Finite Elements\/}.
\newblock Applied Mathematical Sciences 159, 1 edn.
\newblock Springer-Verlag New York.

\bibitem[F.~Brezzi(1991)F.~Brezzi]{brezzi:falk}
{\sc F.~Brezzi, R. S.~F.} (1991)
\newblock Stability of higher-order {H}ood-{T}aylor method.
\newblock {\em SIAM J. Numer. Anal.}, {\bf 28}.

\bibitem[Farhloul(1995)Farhloul]{Farhloulcanadian}
{\sc Farhloul, M.} (Fall 1995)
\newblock Mixed and nonconforming finite element methods for the stokes
  problem.
\newblock {\em Canadian Applied Mathematics Quarterly\/}, {\bf 3}.

\bibitem[Farhloul \& Fortin(1993)Farhloul \& Fortin]{MR1231323}
{\sc Farhloul, M. \& Fortin, M.} (1993)
\newblock A new mixed finite element for the {S}tokes and elasticity problems.
\newblock {\em SIAM J. Numer. Anal.}, {\bf 30}, 971--990.

\bibitem[Farhloul \& Fortin(1997)Farhloul \& Fortin]{MR1464150}
{\sc Farhloul, M. \& Fortin, M.} (1997)
\newblock Dual hybrid methods for the elasticity and the {S}tokes problems: a
  unified approach.
\newblock {\em Numer. Math.}, {\bf 76}, 419--440.

\bibitem[Farhloul \& Fortin(2002)Farhloul \& Fortin]{MR1934446}
{\sc Farhloul, M. \& Fortin, M.} (2002)
\newblock Review and complements on mixed-hybrid finite element methods for
  fluid flows.
\newblock {\em Proceedings of the 9th {I}nternational {C}ongress on
  {C}omputational and {A}pplied {M}athematics ({L}euven, 2000)\/}, vol. 140.
\newblock , pp. 301--313.

\bibitem[Girault \& Raviart(2012)Girault \& Raviart]{girault2012finite}
{\sc Girault, V. \& Raviart, P.-A.} (2012)
\newblock {\em Finite element methods for Navier-Stokes equations: theory and
  algorithms\/},  vol.~5.
\newblock Springer Science \& Business Media.

\bibitem[John {\em et~al.}(2016)John, Linke, Merdon, Neilan, \&
  Rebholz]{JLMNR:sirev}
{\sc John, V., Linke, A., Merdon, C., Neilan, M. \& Rebholz, L.} (2016)
\newblock On the divergence constraint in mixed finite element methods for
  incompressible flows.
\newblock {\em SIAM Review\/}, {\bf accepted}.

\bibitem[Karniadakis \& Sherwin(2013)Karniadakis \&
  Sherwin]{karniadakis2013spectral}
{\sc Karniadakis, G. \& Sherwin, S.} (2013)
\newblock {\em Spectral/hp element methods for computational fluid dynamics\/}.
\newblock Oxford University Press.

\bibitem[K\"onn\"o \& Stenberg(2012)K\"onn\"o \& Stenberg]{stenberg2011}
{\sc K\"onn\"o, J. \& Stenberg, R.} (2012)
\newblock Numerical computations with {H(div)}-finite elements for the
  {Brinkman} problem.
\newblock {\em Computational Geosciences\/}, {\bf 16}, 139--158.

\bibitem[Lederer {\em et~al.}(2017a)Lederer, Linke, Merdon, \&
  Sch\"oberl]{2016arXiv160903701L}
{\sc Lederer, P.~L., Linke, A., Merdon, C. \& Sch\"oberl, J.} (2017a)
\newblock Divergence-free {R}econstruction {O}perators for {P}ressure-{R}obust
  {S}tokes {D}iscretizations with {C}ontinuous {P}ressure {F}inite {E}lements.
\newblock {\em SIAM J. Numer. Anal.}, {\bf 55}, 1291--1314.

\bibitem[Lederer {\em et~al.}(2017b)Lederer, Lehrenfeld, \&
  Sch\"{o}berl]{ledlehrschoe2017relaxedpartI}
{\sc Lederer, P.~L., Lehrenfeld, C. \& Sch\"{o}berl, J.} (2017b)
\newblock {Hybrid Discontinuous {Galerkin} methods with relaxed
  {$H(div)$}-conformity for incompressible flows. Part I}.
\newblock {\em to appear in SIAM journal on numerical analysis (preprint
  arXiv:1707.02782)\/}.

\bibitem[Lederer {\em et~al.}(2018)Lederer, Lehrenfeld, \&
  Sch\"{o}berl]{ledlehrschoe2018relaxedpartII}
{\sc Lederer, P.~L., Lehrenfeld, C. \& Sch\"{o}berl, J.} (2018)
\newblock {Hybrid Discontinuous {Galerkin} methods with relaxed
  {$H(div)$}-conformity for incompressible flows. Part II}.
\newblock {\em arXiv preprint arXiv:1805.06787\/}.

\bibitem[Lederer \& Sch\"oberl(2017)Lederer \&
  Sch\"oberl]{LedererSchoeberl2017}
{\sc Lederer, P.~L. \& Sch\"oberl, J.} (2017)
\newblock {Polynomial robust stability analysis for $H$(div)-conforming finite
  elements for the Stokes equations}.
\newblock {\em IMA Journal of Numerical Analysis\/}, drx051.

\bibitem[Lehrenfeld \& Sch\"{o}berl(2016)Lehrenfeld \&
  Sch\"{o}berl]{LS_CMAME_2016}
{\sc Lehrenfeld, C. \& Sch\"{o}berl, J.} (2016)
\newblock High order exactly divergence-free hybrid discontinuous galerkin
  methods for unsteady incompressible flows.
\newblock {\em Computer Methods in Applied Mechanics and Engineering\/}, {\bf
  307}, 339 -- 361.

\bibitem[Linke(2012)Linke]{Linke:2012}
{\sc Linke, A.} (2012)
\newblock A divergence-free velocity reconstruction for incompressible flows.
\newblock {\em C. R. Math. Acad. Sci. Paris\/}, {\bf 350}, 837--840.

\bibitem[Linke(2014)Linke]{linke2014role}
{\sc Linke, A.} (2014)
\newblock On the role of the {Helmholtz} decomposition in mixed methods for
  incompressible flows and a new variational crime.
\newblock {\em Computer Methods in Applied Mechanics and Engineering\/}, {\bf
  268}, 782--800.

\bibitem[Linke {\em et~al.}(2016)Linke, Matthies, \& Tobiska]{lmt:2016}
{\sc Linke, A., Matthies, G. \& Tobiska, L.} (2016)
\newblock Robust arbitrary order mixed finite element methods for the
  incompressible {S}tokes equations with pressure independent velocity errors.
\newblock {\em ESAIM: M2AN\/}, {\bf 50}, 289--309.

\bibitem[Pechstein \& Sch\"oberl(2011)Pechstein \& Sch\"oberl]{MR2826472}
{\sc Pechstein, A.~S. \& Sch\"oberl, J.} (2011)
\newblock Tangential-displacement and normal-normal-stress continuous mixed
  finite elements for elasticity.
\newblock {\em Math. Models Methods Appl. Sci.}, {\bf 21}, 1761--1782.

\bibitem[Pechstein \& Sch\"oberl(2017)Pechstein \& Sch\"oberl]{MR3712290}
{\sc Pechstein, A.~S. \& Sch\"oberl, J.} (2017)
\newblock The {TDNNS} method for {R}eissner-{M}indlin plates.
\newblock {\em Numer. Math.}, {\bf 137}, 713--740.

\bibitem[Raviart \& Thomas(1977)Raviart \& Thomas]{RaviaThoma77}
{\sc Raviart, P.-A. \& Thomas, J.~M.} (1977)
\newblock A mixed finite element method for 2nd order elliptic problems.
\newblock {\em Mathematical aspects of finite element methods (Proc. Conf.,
  Consiglio Naz. delle Ricerche (C.N.R.), Rome, 1975)\/}.
\newblock Berlin: Springer, pp. 292--315. Lecture Notes in Math., Vol. 606.

\bibitem[Sch{\"o}berl(1997)Sch{\"o}berl]{netgen}
{\sc Sch{\"o}berl, J.} (1997)
\newblock {NETGEN An advancing front 2D/3D-mesh generator based on abstract
  rules}.
\newblock {\em Computing and Visualization in Science\/}, {\bf 1}, 41--52.

\bibitem[Sch{\"o}berl(2014)Sch{\"o}berl]{ngsolve}
{\sc Sch{\"o}berl, J.} (2014)
\newblock {C++11 Implementation of Finite Elements in NGSolve}.
\newblock {\em Institute for Analysis and Scientific Computing, Vienna
  University of Technology\/}.

\bibitem[Sinwel(2009)Sinwel]{astriddiss}
{\sc Sinwel, A.~S.} (2009)
\newblock {A New Familiy of Mixed Finite Elements for Elasticity}.
\newblock {\em Ph.D. thesis}, JKU Linz.

\bibitem[Zaglmayr(2006)Zaglmayr]{zaglmayr2006high}
{\sc Zaglmayr, S.} (2006)
\newblock High order finite element methods for electromagnetic field
  computation.
\newblock {\em Ph.D. thesis}, JKU Linz.

\end{thebibliography}

\end{document}